\newcommand{\mydot}{\mathrel{\scalebox{0.6}{$\bullet$}}}
\newcommand{\dint}{\displaystyle\int}
\newcommand{\dsum}{\displaystyle\sum}
\newcommand{\dprod}{\displaystyle\prod}
\newtheorem{lemma}{Lemma}
\newtheorem{proposition}{Proposition}
\newtheorem{theorem}{Theorem}
\newtheorem{definition}{Definition}
\theoremstyle{remark}
\newtheorem{remark}{Remark}
\renewcommand*{\backref}[1]{}
\renewcommand*{\backrefalt}[4]{}
\title{A multivariate spatial regression model using signatures}
\author[,1]{Camille Frévent\thanks{Corresponding author: \texttt{camille.frevent@univ-lille.fr}}}
\affil[1]{Univ. Lille, CHU Lille, ULR 2694 - METRICS: Évaluation des technologies de santé et des pratiques médicales, F-59000 Lille, France}
\author[2,3]{Issa-Mbenard Dabo}
\affil[2]{Institut de mathématiques de Bordeaux, University of Bordeaux, France}
\affil[3]{Departement of Mathematics, Division of Science, New York University, Abu Dhabi, UAE}
\date{}
\begin{document}

\maketitle

\hrule
\section*{Abstract}
We propose a spatial autoregressive model for a multivariate response variable and functional covariates. The approach is based on the notion of signature, which represents a function as an infinite series of its iterated integrals and presents the advantage of being applicable to a wide range of processes. \\
We have provided theoretical guarantees for the choice of the signature truncation order, and we have shown in a simulation study and an application to pollution data that this approach outperforms existing approaches in the literature. \\[0.2cm]
\textbf{Keywords:} Functional data, Multivariate regression, Signature, Spatial regression, Tensor \vspace{0.5cm}
\hrule

\section{Introduction}

Advances in sensing technology and data storage capacities have led to an increasing amount of continuously recorded data over time. This led to the introduction of Functional Data Analysis (FDA) by \cite{ramsaylivre} and to the adaptation of numerous statistical approaches to the functional framework. We are interested here in regression models for a real response variable and functional covariates observed over a time interval $\mathcal{T}$. In this context, the traditional approach assumes that the functional covariate $X$ belongs to $\mathcal{L}^2(\mathcal{T}, \mathbb{R}^P)$, the space of $P$-dimensional square-integrable functions on $\mathcal{T}$, and considers the following model \citep{ramsaylivre}:
$$ Y = \dint_{\mathcal{T}} X(t)^\top \beta^*(t)  \ \text{d}t + \varepsilon. $$ 
This model is usually estimated by approximating $X$ and $\beta$ as finite combinations of basis functions and then using classical linear regression estimation on the obtained coefficients. \\

In recent years, signatures - initially defined by \cite{chen1957integration, chen1977iterated} for smooth paths and rediscovered in the context of rough path theory \citep{lyons1998differential, friz2010multidimensional} - have gained popularity in many fields such as character recognition \citep{graham2013sparse,liu2017ps,7995142}, medicine \citep{perez2018signature, morrill2020utilization} and finance \citep{gyurko2013extracting, arribas2018derivatives, perez2020signatures}. \cite{fermanian2022functional} first proposed a linear regression model for a real response variable and functional covariates using their signatures, highlighting three main advantages: (i) signatures do not require $X \in \mathcal{L}^2(\mathcal{T}, \mathbb{R}^P)$, (ii) they are naturally adapted to multivariate functions, and (iii) they encode the geometric properties of $X$. \\

In domains where data inherently involve a spatial component (e.g., environmental science), functional data analysis has led to the development of methods specifically designed for spatial functional data. In the context of spatial regression, \cite{huang2018spatial} and \cite{ahmed2022quasi} assumed $X \in \mathcal{L}^2(\mathcal{T}, \mathbb{R})$ and proposed the following functional spatial autoregressive model (FSARLM):
$$ Y_i = \rho^* \dsum_{j=1}^n W_{i,j} Y_j + \dint_\mathcal{T} \beta(t)^* X(t) \ \text{d}t + \varepsilon_i $$
where $W = (W_{i,j})_{1 \le i,j \le n}$ is a non-stochastic spatial weights matrix and $\rho^*$ is a spatial autoregressive parameter in $[-1,1]$. \\
Following the popularization of signatures, \cite{frevent2023functional} introduced two spatial regression models for functional covariates based on a SAR model and signatures: the ProjSSAR and the PenSSAR. Briefly, the ProjSSAR is based on a Principal Component Analysis (PCA) applied to signatures and a spatial regression estimation using the PCA scores, while the PenSSAR employs a penalized spatial regression. \\

In the context of a multivariate response variable, \cite{yang2017identification} and \cite{zhu2020multivariate} proposed spatial regression models for non-functional covariates. However, to our knowledge, no spatial regression model has been developed for a multivariate response variable and functional covariates. This motivated us to develop a new model in this context, combining the MSAR proposed by \cite{zhu2020multivariate}, the PenSSAR \citep{frevent2023functional}, and Ridge penalization \citep{yanagihara2010unbiased}. \\

Section \ref{sec:sig} introduces the signatures and their properties. Section \ref{sec:model} presents the proposed multivariate penalized signatures-based spatial regression model as well as its estimation procedure and theoretical guaranties. Section \ref{sec:simu} describes a simulation study comparing the new model to the FSARLM \citep{ahmed2022quasi}, the PenSSAR and the ProjSSAR \citep{frevent2023functional}. Our method is then applied to a real dataset in Section \ref{sec:appli}. Finally, Section \ref{sec:discussion} concludes the paper with a discussion.

\section{Signature of a path} \label{sec:sig}

We provide in this section a brief presentation of signatures and we refer the reader to \cite{lyons2007differential,friz2010multidimensional} for a more complete description.
The signature of a smooth path $\mathcal{X}$ is an infinite sequence of tensors defined by iterated integrals that gathers information about $\mathcal{X}$. We assume the covariate $\mathcal{X}: \mathcal{T} \rightarrow \mathbb{R}^P$ to be a continuous path of bounded variation, that is it is continuous and
$$
\left\| \mathcal{X}\right\|_{TV} = \sup_{(t_0,...,t_k)\in \mathcal{I}} \sum_{i=1}^k \left\| \mathcal{X}_{t_i} - \mathcal{X}_{t_{i-1}}\right\| < +\infty,
$$
where $\left\| .\right\|_{TV}$ denotes the total variation distance, $\left\| .\right\|$ denotes the Euclidean norm on $\mathbb{R}^P$ and $\mathcal{I}$ denotes the set of all finite partitions of $\mathcal{T}$. We denote by $\mathcal{C}(\mathcal{T},\mathbb{R}^P)$, the set of path of bounded variation on $\mathcal{T}$.

We are now able to define the signature of a continuous path of bounded variation.

\begin{definition}
Let $\mathcal{X} \in \mathcal{C}(\mathcal{T},\mathbb{R}^P)$, the signature of $\mathcal{X}$ is the following sequence 
$$ Sig(\mathcal{X}) = (1, \bm{\mathcal{X}}^1, \dots, \bm{\mathcal{X}}^k, \dots) $$
where $$ \bm{\mathcal{X}}^k = \underset{\substack{t_1 < \dots < t_k \\ t_1, \dots, t_k \in \mathcal{T}}}{\dint \dots \dint} \ \text{d}\mathcal{X}_{t_1} \otimes \dots \otimes \text{d}\mathcal{X}_{t_k} \in \left(\mathbb{R}^{P}\right)^{\otimes k}. $$
\end{definition}

\begin{definition}
Let $\mathcal{X} \in \mathcal{C}(\mathcal{T},\mathbb{R}^P)$. We define the signature coefficients vector of $\mathcal{X}$ as the following sequence 
$$
\widetilde{S}(\mathcal{X})=\left(1,S^{(1)}(\mathcal{X}), \dots, S^{(P)}(\mathcal{X}), S^{(1,1)}(\mathcal{X}), S^{(1,2)}(\mathcal{X}), \dots, S^{\left(i_1, \dots, i_k \right)}(\mathcal{X}), \dots \right),
$$
and the shifted-signature coefficients vector of $\mathcal{X}$ is defined as follows
$$
S(\mathcal{X})=\left(S^{(1)}(\mathcal{X}), \dots, S^{(P)}(\mathcal{X}), S^{(1,1)}(\mathcal{X}), S^{(1,2)}(\mathcal{X}), \dots, S^{\left(i_1, \dots, i_k\right)}(\mathcal{X}), \dots\right),
$$
where for all $k \ge 1$ and for all multi-index , $I=(i_1, \dots, i_k) \subset\{1, \dots, P\}^k$ of length $k$, $S^I(\mathcal{X})$ is the signature coefficient of order $k$ along $I$ on $\mathcal{T}$ defined as the following iterated integral:
$$
S^I(\mathcal{X})=\underset{\substack{t_1 < \dots < t_k \\ t_1, \dots, t_k \in \mathcal{T}}}{\dint \dots \dint} \ \text{d}\mathcal{X}_{t_1}^{(i_1)} \dots \text{d}\mathcal{X}_{t_k}^{(i_k)}.
$$
\end{definition}

A signature coefficients vector is an infinite sequence of iterated integrals, however it is more convenient to use finite sequences. Therefore we define in the following the truncated signature.

\begin{definition}
Let $\mathcal{X} \in \mathcal{C}(\mathcal{T},\mathbb{R}^P)$ and $m \ge 0$. The truncated signature coefficients vector of $\mathcal{X}$ at order $m$, denoted by $\widetilde{S}^m(\mathcal{X})$, is the sequence of signature coefficients of order $k \le m$, that is
$$
\widetilde{S}^m(\mathcal{X})=(1, S^{(1)}(\mathcal{X}), S^{(2)}(\mathcal{X}), \dots, S^{\overbrace{ \scriptstyle(P, \ldots, P)}^{\text {length m }}}(\mathcal{X})).
$$
We define similarly the truncated shifted-signature coefficients vector of $\mathcal{X}$:
$$
S^m(\mathcal{X})=(S^{(1)}(\mathcal{X}), S^{(2)}(\mathcal{X}), \dots, S^{\overbrace{\scriptstyle(P, \ldots, P)}^{\text {length m }}}(\mathcal{X})).
$$
\end{definition}

The truncated shifted-signature coefficients vector is then a vector of length $s_P(m)$, where 
$$
s_P(m) = \sum_{k = 1}^m P^k = \frac{P^{m+1}-P}{P-1} \quad \mbox{ for } P\geq 2 \mbox{ and } s_1(m) = m.
$$

\begin{theorem}[Proposition 2 from \cite{fermanian2022functional}] \label{th:approx} \phantom{linebreak} \\
Let $f: \mathcal{D} \rightarrow \mathbb{R}$ be a continuous function where $\mathcal{D} \subset \mathcal{C}(\mathcal{T},\mathbb{R}^P)$ is a compact subset such that for any $\mathcal{X}\in \mathcal{D}$, $\mathcal{X}_0 = 0$. \\
Let $\mathcal{X}\in \mathcal{D}$, we define $\widetilde{\mathcal{X}}=\left(\mathcal{X}_t^\top, t\right)_{t \in \mathcal{T}}^{\top}$ the associated time-augmented path. \\
Then, for every $\delta>0$, there exists $m^* \in \mathbb{N}, \beta^*_{m^*} \in \mathbb{R}^{s_P\left(m^*\right)+1}$, such that, for any $\mathcal{X} \in \mathcal{D}$,
$$
\left|f(\mathcal{X})-\left\langle\beta^*_{m^*}, \widetilde{S}^{m^*}(\widetilde{\mathcal{X}})\right\rangle\right| \le \delta,
$$
where $\langle\cdot, \cdot\rangle$ denotes the Euclidean scalar product on $\mathbb{R}^{s_P\left(m^*\right)+1}$.
\end{theorem}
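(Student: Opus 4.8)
The plan is to realise the right-hand side as an element of a suitable subalgebra of $C(\mathcal{D},\mathbb{R})$ and to conclude by the Stone--Weierstrass theorem. Let $\mathcal{A}$ be the set of maps $\mathcal{D}\to\mathbb{R}$ of the form $\mathcal{X}\mapsto\langle\beta,\widetilde{S}^{m}(\widetilde{\mathcal{X}})\rangle$, ranging over all $m\in\mathbb{N}$ and $\beta\in\mathbb{R}^{s_P(m)+1}$; equivalently, $\mathcal{A}$ is the linear span of the constant $1$ together with all coordinate functionals $\mathcal{X}\mapsto S^{I}(\widetilde{\mathcal{X}})$. If $\mathcal{A}$ is a unital subalgebra of $C(\mathcal{D},\mathbb{R})$ that separates the points of $\mathcal{D}$, then, $\mathcal{D}$ being compact, $\mathcal{A}$ is $\|\cdot\|_\infty$-dense in $C(\mathcal{D},\mathbb{R})$, so for every $\delta>0$ there is $g\in\mathcal{A}$ with $\sup_{\mathcal{X}\in\mathcal{D}}|f(\mathcal{X})-g(\mathcal{X})|\le\delta$; taking $m^*$ to be the largest signature order appearing in $g$ and $\beta^*_{m^*}$ the corresponding coefficient vector yields exactly the claimed inequality. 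Thus everything reduces to verifying the three Stone--Weierstrass hypotheses.

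First, $\mathcal{A}\subset C(\mathcal{D},\mathbb{R})$ and $\mathcal{A}$ contains the constants: for a multi-index $I$ of length $k$, the iterated integral $S^{I}(\widetilde{\mathcal{X}})$ is well defined because $\widetilde{\mathcal{X}}$ has bounded variation and obeys the factorial bound $|S^{I}(\widetilde{\mathcal{X}})|\le\|\widetilde{\mathcal{X}}\|_{TV}^{k}/k!$; moreover $\mathcal{X}\mapsto S^{I}(\widetilde{\mathcal{X}})$ is continuous for the topology making $\mathcal{D}$ compact, and the order-$0$ coefficient of any signature equals $1$. Second, $\mathcal{A}$ is closed under products: this is the shuffle identity, which expresses $S^{I}(\widetilde{\mathcal{X}})\,S^{J}(\widetilde{\mathcal{X}})$ as a finite sum of signature coefficients of order $|I|+|J|$ (the terms indexed by the shuffles of $I$ and $J$), and follows from the integration-by-parts formula for iterated integrals; together with the obvious closure under linear combinations, this makes $\mathcal{A}$ a subalgebra.

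The step I expect to be the crux -- and the reason for time-augmenting the path -- is point separation. By the uniqueness theorem of Hambly and Lyons, the signature of a bounded-variation path determines it only up to tree-like equivalence, so two distinct elements of $\mathcal{D}$ could a priori share a signature. Replacing $\mathcal{X}$ by $\widetilde{\mathcal{X}}=(\mathcal{X}_t^\top,t)^\top$ removes the ambiguity: the last coordinate is strictly increasing, so $\widetilde{\mathcal{X}}$ has no nontrivial tree-like excursion and its parametrisation is canonical, whence $\mathcal{X}\mapsto\widetilde{S}(\widetilde{\mathcal{X}})$ is injective on $\mathcal{D}$ (the hypothesis $\mathcal{X}_0=0$ ensures $\widetilde{\mathcal{X}}_0=0$, as required for the signature). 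Consequently, if $\mathcal{X}\ne\mathcal{Y}$ in $\mathcal{D}$, then $S^{I}(\widetilde{\mathcal{X}})\ne S^{I}(\widetilde{\mathcal{Y}})$ for some $I$, and the corresponding coordinate functional, which lies in $\mathcal{A}$, separates $\mathcal{X}$ and $\mathcal{Y}$. With the three hypotheses established, Stone--Weierstrass gives the result.
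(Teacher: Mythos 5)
The paper does not prove this statement: it is imported verbatim as Proposition~2 of \cite{fermanian2022functional}, so there is no internal proof to compare against. Your argument --- realising the linear functionals of truncated signatures as a unital subalgebra of $C(\mathcal{D},\mathbb{R})$ via the shuffle identity, obtaining point separation from the Hambly--Lyons uniqueness theorem together with time-augmentation (and the normalisation $\mathcal{X}_0=0$ to kill translation invariance), and concluding by Stone--Weierstrass --- is exactly the standard proof given in that reference and in the universal-approximation literature it relies on, and it is correct in outline. The one step you assert rather than verify is the continuity of the coordinate functionals $\mathcal{X}\mapsto S^{I}(\widetilde{\mathcal{X}})$ on $\mathcal{D}$: this holds when $\mathcal{D}$ is compact for the $1$-variation (total variation) topology, which is the reading intended by the statement, but it would fail under mere uniform compactness, so the choice of topology deserves an explicit word.
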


In the next sections we adopt the more conventional notation for functional data
$$\begin{array}{lccl}
\mathcal{X}: & \mathcal{T} & \rightarrow & \mathbb{R}^P \\
& t & \rightarrow & \left( \mathcal{X}^{(1)}(t), \dots, \mathcal{X}^{(P)}(t) \right)^\top
\end{array} .$$

\section{The multivariate penalized signatures-based spatial regression \\ (MPenSSAR) model} \label{sec:model}

In the following sections we denote 
$\mathcal{M}_{s_P(m) \times Q}(\mathbb{R})$ the set of real matrices of size $s_P(m) \times Q$ and $\mathcal{M}_{Q}([-1,1])$ the set of real square matrices of size $Q \times Q$ with values in $[-1,1]$, $\mathcal{B}_{s_P(m) \times Q, \alpha}$ the ball composed by the real matrices of size $s_P(m) \times Q$ with a Frobenius norm less than $\alpha$, $\bm{0}_Q$ the column vector consisting of $Q$ times the value 0 and $\bm{1}_n$ the column vector consisting of $n$ times the value 1.

\subsection{The model}
We assume that $\mathcal{X}(0)=\bm{0}_P$ and that $\mathcal{X}$ has been time-augmented. Then, Theorem \ref{th:approx} motivates us to consider the following model for the process $\mathcal{Y} = \{ \mathcal{Y}(s_i) = \mathcal{Y}_i \in \mathbb{R}^Q, 1 \le i \le n \}$ in $n$ spatial units $s_1, \dots, s_n $:
\begin{equation} \label{eq:model}
\bm{\mathcal{Y}} = W\bm{\mathcal{Y}}R^* + \bm{1}_n \mu^*  + \bm{S^{m^*}(\mathcal{X})} \beta_{m^*}^* + \bm{\varepsilon}    
\end{equation}

with $\bm{\mathcal{Y}} = (\mathcal{Y}_1^\top, \dots, \mathcal{Y}_n^\top)^\top \in \mathcal{M}_{n \times Q}(\mathbb{R})$,
$\bm{S^{m^*}(\mathcal{X})} = (S^{m^*}(\mathcal{X}_1)^\top, \dots, S^{m^*}(\mathcal{X}_n)^\top)^\top \in \mathcal{M}_{n \times s_P(m^*)}(\mathbb{R})$ and $\bm{\varepsilon} = (\varepsilon_1^\top, \dots, \varepsilon_n^\top)^\top \in \mathcal{M}_{n \times Q}(\mathbb{R})$ where the disturbances $\{\varepsilon_i \in \mathbb{R}^Q, 1 \le i \le n\}$ are assumed to be independent and identically distributed random variables that are independent of $\{X_i(t) \in \mathbb{R}^P, t \in \mathcal{T}, 1 \le i \le n \}$ and such that $\mathbb{E}(\varepsilon_i)=\bm{0}_Q$ and $\mathbb{V}(\varepsilon_i)=\Sigma \in \mathcal{M}_Q(\mathbb{R})$ .

$\mu^* \in \mathcal{M}_{1 \times Q}(\mathbb{R})$, $\beta_{m^*}^* \in \mathcal{M}_{s_P(m^*) \times Q}(\mathbb{R})$, and $R^* \in \mathcal{M}_Q([-1,1])$ is such that its diagonal elements $\{R^*_{q,q}, 1 \le q \le Q\}$ represent the spatial effects of the $q^\text{th}$ variable in $\mathcal{Y}$ on itself and the elements outside its diagonal $\{R^*_{q,q'}, 1 \le q,q' \le Q, q \neq q'\}$ represent the cross-variable spatial effects \citep{yang2017identification}. 

Finally, $W \in \mathcal{M}_n(\mathbb{R})$ is a spatial weight matrix that it is common but not necessary to row normalize in practice. \\

Now we consider a sample of $\mathcal{Y}$ and $\mathcal{X}$ in the spatial locations $s_1, \dots, s_n$ ($\bm{Y}$ and $\bm{X}$), then:
$$ \bm{Y} = W\bm{Y}R^* + \bm{1}_n \mu^* + \bm{S^{m^*}(X)} \beta_{m^*}^* + \bm{e}$$
where $\bm{Y} = (Y_1^\top, \dots, Y_n^\top)^\top \in \mathcal{M}_{n \times Q}(\mathbb{R})$, $\bm{S^{m^*}(X)} = (S^{m^*}(X_1)^\top, \dots, S^{m^*}(X_n)^\top)^\top \in \mathcal{M}_{n \times s_P(m^*)}(\mathbb{R})$ and $\bm{e} = (e_1^\top, \dots, e_n^\top)^\top \in \mathcal{M}_{n \times Q}(\mathbb{R})$.

\subsection{Estimation}

In Model \ref{eq:model}, the parameters $R^*, \mu^*, \beta_{m^*}^*$ as well as the true truncation order $m^*$ are unknown and must be estimated. However, due to the large number $s_P(m^*) \times Q$ of coefficients in $\beta_{m^*}^*$ to be estimated, we need to use a penalized approach. In the following we will consider a Ridge regularization by assuming $(\mathcal{H}_\alpha): \exists \alpha >0 / \beta_{m^*}^* \in \mathcal{B}_{s_P(m^*) \times Q,\alpha} $. \\

Then, for a fixed truncation order $m$, we consider the objective function \citep{ma2020naive}:
$$ \mathcal{R}_m(\mu_m, \beta_m, R_m) = \mathbb{E}\left( \dfrac{1}{n} \left|\left| \bm{\mathcal{Y}} - W \bm{\mathcal{Y}} R_m - \bm{1}_n \mu_m - \bm{S^m(\mathcal{X})} \beta_m \right|\right|^2 \right), $$
which is minimal on $\mathcal{M}_{1 \times Q}(\mathbb{R}) \times \mathcal{B}_{s_P(m) \times Q, \alpha}(\mathbb{R}) \times \mathcal{M}_Q([-1,1])$ in 
$$ (\mu_m^*,\beta_m^*,R_m^*) = \underset{\substack{ \mu_m \in \mathcal{M}_{1 \times Q}(\mathbb{R}) \\
\beta_m \in \mathcal{B}_{s_P(m) \times Q, \alpha}(\mathbb{R}) \\
R_m \in \mathcal{M}_Q([-1,1])}}{\arg \min} \ \mathcal{R}_m(\mu_m, \beta_m, R_m). $$ 

Then, we denote
$$ L(m) = \mathbb{E}\left( \dfrac{1}{n} \left|\left| \bm{\mathcal{Y}} - W \bm{\mathcal{Y}} R_m^* - \bm{1}_n \mu_m^* - \bm{S^m(\mathcal{X})} \beta_m^* \right|\right|^2 \right). $$

These quantities can also be written on the sample $\bm{Y} = (Y_1^\top, \dots, Y_n^\top)^\top$ by considering the empirical objective function
$$\widehat{\mathcal{R}}_m(\mu_m,\beta_m,R_m) = \dfrac{1}{n} \left|\left| \bm{Y} - W\bm{Y}R_m - \bm{1}_n \mu_m - \bm{S^m(X)}\beta_m \right|\right|^2 $$
and its minimum on $\mathcal{M}_{1 \times Q}(\mathbb{R}) \times \mathcal{B}_{s_P(m) \times Q, \alpha}(\mathbb{R}) \times \mathcal{M}_Q([-1,1])$, which is reached in $\widehat{\mu}_m, \widehat{\beta}_m, \widehat{R}_m$:
$$\widehat{L}(m) = \widehat{\mathcal{R}}_m(\widehat{\mu}_m,\widehat{\beta}_m,\widehat{R}_m).$$

Now, it should be noted that minimizing
 
$$ \widehat{\mathcal{R}}_m(\mu_m,\beta_m,R_m) = \dfrac{1}{n} \left|\left| \bm{Y} - W \bm{Y} R_m -  \bm{\widetilde{S}^m(X)} \begin{pmatrix} \mu_m \\ \beta_m \end{pmatrix} \right|\right|^2 $$ on
$\mathcal{M}_{1 \times Q}(\mathbb{R}) \times \mathcal{B}_{s_P(m) \times Q, \alpha}(\mathbb{R}) \times \mathcal{M}_Q([-1,1])$
is equivalent to minimize 
$$ \widehat{\mathcal{R}}_m(\mu_m,\beta_m,R_m) + \lambda ||\beta_m||^2 = \dfrac{1}{n} \left|\left| \bm{Y} - W \bm{Y} R_m - \bm{\widetilde{S}^m(X)} \begin{pmatrix} \mu_m \\ \beta_m \end{pmatrix} \right|\right|^2 + \lambda ||\beta_m||^2 $$
on $\mathcal{M}_{1 \times Q}(\mathbb{R}) \times \mathcal{M}_{s_P(m) \times Q}(\mathbb{R}) \times \mathcal{M}_Q([-1,1])$,
where the Ridge parameter $\lambda$ depends on $\alpha$.

Thus,
\begin{align*}
(\widehat{\mu}_m, \widehat{\beta}_m, \widehat{R}_m) &= \underset{\substack{\mu_m \in \mathcal{M}_{1 \times Q}(\mathbb{R}) \\ \beta_m \in \mathcal{M}_{s_P(m) \times Q}(\mathbb{R}) \\ R_m \in \mathcal{M}_Q([-1,1])}}{\arg\min} \ \dfrac{1}{n} \left|\left| \bm{Y} - W \bm{Y} R_m - \bm{\widetilde{S}^m(X)} \begin{pmatrix} \mu_m \\ \beta_m \end{pmatrix} \right|\right|^2 + \lambda ||\beta_m||^2 \\
&= \underset{\substack{\mu_m \in \mathcal{M}_{1 \times Q}(\mathbb{R}) \\ \beta_m \in \mathcal{M}_{s_P(m) \times Q}(\mathbb{R}) \\ R_m \in \mathcal{M}_Q([-1,1])}}{\arg\min} \ \dfrac{1}{n} \dsum_{i=1}^n \left|\left| Y_i - W_{i,\mydot} \bm{Y} R_m - \widetilde{S}^m(X_i) \begin{pmatrix} \mu_m \\ \beta_m \end{pmatrix} \right|\right|^2 + \lambda ||\beta_m||^2.
\end{align*}

By deriving $$\dfrac{1}{n} \dsum_{i=1}^n \left|\left| Y_i - W_{i,\mydot} \bm{Y} R_m - \widetilde{S}^m(X_i) \begin{pmatrix} \mu_m \\ \beta_m \end{pmatrix} \right|\right|^2 + \lambda ||\beta_m||^2.$$ as a function of $\mu_m$ and $\beta_m$, we obtain the following estimators for these parameters as a function of $R_m$:
\begin{equation}
\label{eq:functionofRm}
\begin{pmatrix} \hat{\mu}_m(R_m) \\ \hat{\beta}_m(R_m) \end{pmatrix} = \left( \bm{\widetilde{S}^m(X)}^\top \bm{\widetilde{S}^m(X)} + n \Lambda \right)^{-1} \left( \bm{\widetilde{S}^m(X)}^\top (\bm{Y} - W \bm{Y} R_m) \right)
\end{equation}
where $\Lambda = \begin{pmatrix}
    0 & 0 & 0 & \dots & 0 \\
    0 & \lambda & 0 & \dots & 0 \\
    0 & 0 & \ddots & \ddots & \vdots \\
    \vdots & \ddots & \ddots & \ddots & 0 \\
    0 & \dots & 0 & 0 & \lambda
\end{pmatrix}$.

Then, we propose the following algorithm for a fixed truncation order $m$ and a fixed regularization matrix $\Lambda$:

\begin{footnotesize}
\SetKwComment{Comment}{/* }{ */}
\begin{algorithm}
\caption{Algorithm to estimate the MPenSSAR}
\KwData{$W, \bm{Y}, \bm{S^*(\mathcal{X})}, \Lambda$}
\KwResult{$\hat{R}_m, \hat{\mu}_m, \hat{\beta}_m$} \vspace{0.3cm}

$\begin{aligned}
\hat{R}_m &= \underset{R_m \in \mathcal{M}_Q([-1,1])}{\arg \min} \ \dfrac{1}{n} \left|\left| \bm{Y} - W \bm{Y} R_m - \bm{\widetilde{S}^m(X)} \begin{pmatrix} \hat{\mu}_m(R_m) \\ \hat{\beta}_m(R_m) \end{pmatrix} \right|\right|^2 \\
&= \underset{R_m \in \mathcal{M}_Q([-1,1])}{\arg \min} \ \dfrac{1}{n} \left|\left| \bm{Y} - W \bm{Y} R_m - \bm{\widetilde{S}^m(X)} \left( \bm{\widetilde{S}^m(X)}^\top \bm{\widetilde{S}^m(X)} + n \Lambda \right)^{-1} \left( \bm{\widetilde{S}^m(X)}^\top (\bm{Y} - W \bm{Y} R_m) \right) \right|\right|^2
\end{aligned}$ \Comment*[r]{Estimation of $R_m$} \vspace{0.2cm}
$\begin{pmatrix} \hat{\mu}_m \\ \hat{\beta}_m \end{pmatrix} = \begin{pmatrix} \hat{\mu}_m(\hat{R}_m) \\ \hat{\beta}_m(\hat{R}_m) \end{pmatrix} = \left( \bm{\widetilde{S}^m(X)}^\top \bm{\widetilde{S}^m(X)} + n \Lambda \right)^{-1} \left( \bm{\widetilde{S}^m(X)}^\top (\bm{Y} - W \bm{Y} \hat{R}_m) \right)$ \Comment*[r]{Estimation of $\mu_m$ and $\beta_m$ using (\ref{eq:functionofRm})}
\end{algorithm}
\end{footnotesize}

\begin{remark}
In practice the true parameter 
\begin{center}
$ m^* = \min \left\{ m \in \mathbb{N}^* / \exists (\mu_m^*,\beta_m^*,R_m^*) \in \mathcal{M}_{1 \times Q}(\mathbb{R}) \times \mathcal{B}_{s_P(m) \times Q, \alpha} \times \mathcal{M}_Q([-1,1]), \right.$ \\ $\left. \mathbb{E}\left[\bm{\mathcal{Y}} - W\bm{\mathcal{Y}}R_m^*  - \bm{1}_n \mu_m^*| \bm{\mathcal{X}}(.) \right] = \bm{S^m(\mathcal{X})}\beta_m^*  \right\} $
\end{center}
is unknown. However, as explained by \cite{fermanian2022functional}, since the balls $\{\mathcal{B}_{s_P(m) \times Q, \alpha} \}_{m \in \mathbb{N}^*}$ are nested, the function $L$ defined on $\mathbb{N}$ is decreasing on $\{1,\dots,m^*\}$ and is constant thereafter (equal to $\text{Tr}(\Sigma)$). \\
Its empirical version, 
$$ \widehat{L}(m) = \underset{\substack{\mu_m \in \mathcal{M}_{1 \times Q}(\mathbb{R}) \\ \beta_m \in \mathcal{B}_{s_P(m) \times Q, \alpha} \\ R_m \in \mathcal{M}_Q([-1,1])}}{\min} \ \widehat{\mathcal{R}}_m(\mu_m, \beta_m, R_m) $$ is however decreasing on $\mathbb{N}^*$ and to define an estimator $\widehat{m}$ of $m^*$, we must find a trade-off between a small value for the objective function and a relatively moderate number of coefficients $s_P(\widehat{m}) \times Q$ in $\widehat{\beta}_{\widehat{m}}$, i.e. a compromise between the objective and the complexity of the model. \\
\cite{fermanian2022functional} thus proposed to estimate $m^*$ by minimizing $\widehat{L}(m) + \mathrm{pen}_n(m)$ where $\mathrm{pen}_n$ penalizes the number of coefficients and is defined in Theorem \ref{theorem1}. If the minimum is reached in several values of $m$, the smallest is considered:
$$\widehat{m} = \min \ \left(\underset{m \in \mathbb{N}^*}{\arg \min} \ \widehat{L}(m) + \mathrm{pen}_n(m) \right). $$
This approach requires the parameters $K_{\mathrm{pen}}$ and $\kappa$ (see Theorem \ref{theorem1}) to be fixed. For $K_{\mathrm{pen}}$, we plot $\widehat{m} = \min (\underset{m \in \mathbb{N}^*}{\arg \min} (\widehat{L}(m) + \mathrm{pen}_n(m)))$ as a function of $K_{\mathrm{pen}}$ and get the value of $K_{\mathrm{pen}}$ that corresponds to the first big jump of $\widehat{m}$. Then $K_{\mathrm{pen}}$ is fixed to be twice this value \citep{birge2007minimal, fermanian2022functional}. For $\kappa$, \cite{fermanian2022functional} proposed to take $\kappa = 0.4$. \\
\end{remark}

\subsection{Theoretical guarantees}

In the following we assume $\mu_m = 0$ (and so we remove $\mu_m$ from the unknown parameters), which can be satisfied by centering $\bm{\mathcal{Y}}$ and $\bm{S^m(\mathcal{X})}$, and in addition to assumption ($\mathcal{H}_\alpha$), we assume 
$(\mathcal{H}_K)$:
\begin{itemize}
    \item[i.] $\exists K_{\mathcal{Y}}>0$ such that for all $i \in \{ 1, \dots, n\}, \ ||\mathcal{Y}_i|| \le K_{\mathcal{Y}}$ 
    \item[ii.] $\exists K_{\mathcal{X}}>0$ such that for all $i \in \{ 1, \dots, n\}, \ ||\mathcal{X}_i||_{TV} \le  K_{\mathcal{X}}$
    \item[iii.] $\exists K_\text{neighb} > 0$ such that for all $i \in \{ 1, \dots, n\}, \dsum_{j=1}^n \mathds{1}_{W_{i,j} \neq 0} \le K_\text{neighb}$ (each spatial unit have at most $K_\text{neighb}$ neighbors)
    \item[iv.] For all $i,j \in \{ 1, \dots, n\}, 0 \le W_{i,j} \le 1$
    \item[v.] For all $q,q' \in \{ 1, \dots, Q \}, |R_{m_{q,q'}}| \le 1$
\end{itemize}

It should be noted that these assumptions entail the following:
\begin{align*}
||\bm{\mathcal{Y}}|| &\le \sqrt{n} K_{\mathcal{Y}} \text{ by (i)} \\
||W_{i,\mydot}|| &\le \sqrt{K_\text{neighb}} \text{ ($W$ is bounded in rows) by (iii) and (iv)} \\
||W|| &\le \sqrt{n} \sqrt{K_\text{neighb}} \text{ by (iii) and (iv)} \\
||W_{i,\mydot} \bm{\mathcal{Y}} || &= \sqrt{ \dsum_{j=1}^Q \left( \dsum_{k=1}^n W_{i,k} \mathcal{Y}_{k,j}  \right)^2  } \text{ where } \mathcal{Y}_{k,j} \text{ denotes the } j^{th} \text{ variable of } \mathcal{Y}_k \\
&\le \sqrt{ \dsum_{j=1}^Q \left( \dsum_{k=1}^n W_{i,k} |\mathcal{Y}_{k,j}|  \right)^2  } \\
&\le \sqrt{ \dsum_{j=1}^Q \left( \dsum_{k=1}^n W_{i,k} K_{\mathcal{Y}}  \right)^2 } \text{ by (i)} \\
&= \sqrt{ \dsum_{j=1}^Q \left( \dsum_{k=1}^n W_{i,k} K_{\mathcal{Y}} \mathds{1}_{W_{i,k} \neq 0} \right)^2 } \\
&\le \sqrt{ \dsum_{j=1}^Q \left( K_\text{neighb} K_{\mathcal{Y}}  \right)^2 } = \sqrt{Q} K_\text{neighb} K_{\mathcal{Y}} \text{ by (iii) and (iv)} \\
||S^m(\mathcal{X}_i)|| &\le \left|\left|\widetilde{S}^m(\mathcal{X}_i)\right|\right| \le \exp{(||\mathcal{X}_i||_{TV})} \le \exp{(K_{\mathcal{X}})} \text{ by (ii) and Proposition 3 of \cite{fermanian2022functional}} \\
||R_m|| &\le Q \text{ by (v)}
\end{align*}

\begin{theorem} \label{theorem1}

Let $0<\kappa < \dfrac{1}{2}$, $\mathrm{pen}_n(m) = K_{\mathrm{pen}} n^{-\kappa} \sqrt{s_P(m)}$ and $n \geq \max (n_1,n_3)$ (where $n_1$ and $n_3$ are given in Propositions \ref{prop2} and \ref{prop4}), then

$$
\mathbb{P}(\widehat{m} \neq m^*) \le 148 m^* \exp{\left\{ -n\dfrac{K_4}{16} \left[ L(m^*-1)-\sigma^2 \right]^2\right\}} + 74\sum_{m > m^*}\exp{\left\{ - K_3 s_P(m) n^{-2\kappa+1} \right\}}.
$$

\end{theorem}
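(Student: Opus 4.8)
The plan is to bound $\mathbb{P}(\widehat{m}\neq m^*)$ by splitting the error event according to under- versus over-estimation, $\mathbb{P}(\widehat{m}\neq m^*)\le \mathbb{P}(\widehat{m}<m^*)+\mathbb{P}(\widehat{m}>m^*)$, and to control each piece by a union bound over the relevant truncation orders combined with a concentration inequality comparing the empirical risk $\widehat{L}(\cdot)$ to its population counterpart $L(\cdot)$. The two inputs are: (a) the qualitative behaviour of $L$ recalled in the remark — $L$ is strictly decreasing on $\{1,\dots,m^*\}$ and constant (equal to $\mathrm{Tr}(\Sigma)=\sigma^2$) for $m\ge m^*$; and (b) the deterministic bounds derived from $(\mathcal{H}_\alpha)$ and $(\mathcal{H}_K)$ on $\|\bm{\mathcal{Y}}\|$, $\|W\|$, $\|W_{i,\mydot}\bm{\mathcal{Y}}\|$, $\|S^m(\mathcal{X}_i)\|$ and $\|R_m\|$, which are precisely what makes Hoeffding/bounded-differences inequalities applicable to the constrained least-squares functionals involved. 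I expect the overestimation bound to be the crux, since $\widehat{L}(m)$ is not an empirical mean but the value of a penalized least-squares problem in which $\bm{Y}$ also enters through $W\bm{Y}R_m$ and $\beta_m$ has been profiled out via (\ref{eq:functionofRm}).

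\textbf{Underestimation.} Fix $m<m^*$. On $\{\widehat{m}=m\}$ the selection rule gives $\widehat{L}(m)+\mathrm{pen}_n(m)\le \widehat{L}(m^*)+\mathrm{pen}_n(m^*)$, hence $\widehat{L}(m)-\widehat{L}(m^*)\le \mathrm{pen}_n(m^*)$. Writing $\widehat{L}(m)-\widehat{L}(m^*)=\big(\widehat{L}(m)-L(m)\big)+\big(L(m^*)-\widehat{L}(m^*)\big)+\big(L(m)-L(m^*)\big)$ and using $L(m)-L(m^*)\ge L(m^*-1)-\sigma^2=:\Delta>0$ from the monotonicity of $L$, the bad event forces $\big(L(m)-\widehat{L}(m)\big)+\big(\widehat{L}(m^*)-L(m^*)\big)\ge \Delta-\mathrm{pen}_n(m^*)$; since $\mathrm{pen}_n(m^*)=K_{\mathrm{pen}}n^{-\kappa}\sqrt{s_P(m^*)}\to 0$, for $n\ge n_1$ the right-hand side is at least $\Delta/2$. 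The required deviation of $\widehat{L}$ from $L$ at a fixed order — a concentration result for the constrained minimum of the random quadratic $\widehat{\mathcal{R}}_m$ around $\mathcal{R}_m$, uniform over $R_m\in\mathcal{M}_Q([-1,1])$ and $\beta_m\in\mathcal{B}_{s_P(m)\times Q,\alpha}$ — is supplied by Proposition \ref{prop2}, which yields $\mathbb{P}(\widehat{m}=m)\le C\exp\{-n(K_4/16)\Delta^2\}$. Summing over the at most $m^*$ values $m\in\{1,\dots,m^*-1\}$ produces the first term of the bound.

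\textbf{Overestimation.} Fix $m>m^*$. On $\{\widehat{m}=m\}$ one has $\widehat{L}(m^*)+\mathrm{pen}_n(m^*)\ge \widehat{L}(m)+\mathrm{pen}_n(m)$, i.e. $\widehat{L}(m^*)-\widehat{L}(m)\ge \mathrm{pen}_n(m)-\mathrm{pen}_n(m^*)$. Because $s_P$ grows at least geometrically in $m$ (ratio at least $P$), $\sqrt{s_P(m)}-\sqrt{s_P(m^*)}\ge c_P\sqrt{s_P(m)}$ for $m>m^*$, so the right-hand side is at least $c_PK_{\mathrm{pen}}n^{-\kappa}\sqrt{s_P(m)}$. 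On the other hand $L(m)=L(m^*)=\sigma^2$, so (using that $\widehat{L}$ is itself non-increasing in $m$) $\widehat{L}(m^*)-\widehat{L}(m)$ is a nonnegative centred fluctuation of a contrast of constrained penalized least-squares minima; Proposition \ref{prop4} bounds its upper tail, through a bounded-differences inequality whose variance proxy is dictated by the bounds following $(\mathcal{H}_K)$ together with a covering of the compact parameter sets, by $C\exp\{-K_3\,n\,t^2\}$ at $t=c_PK_{\mathrm{pen}}n^{-\kappa}\sqrt{s_P(m)}$, i.e. by $C\exp\{-K_3\,s_P(m)\,n^{1-2\kappa}\}$ after the covering cost $\exp(c\,s_P(m)Q)$ is absorbed into the exponent — this absorption is exactly what forces the threshold $n\ge n_3$. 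Summing the resulting geometric series over $m>m^*$ gives the second term. The delicate step throughout is this overestimation bound: one must linearize $\widehat{L}(m^*)-\widehat{L}(m)$ around the population optimizers, control the resulting empirical process uniformly over $R_m\in[-1,1]^{Q\times Q}$ by a net whose cardinality stays sub-exponential relative to $s_P(m)n^{1-2\kappa}$, and check that the spatial feedback term $W\bm{Y}R_m$ does not inflate the variance proxy beyond the bounds recorded after $(\mathcal{H}_K)$; tracking the numerical constants through the two union bounds and through Propositions \ref{prop2} and \ref{prop4} then yields the prefactors $148m^*$ and $74$ and the exponents $K_4/16$ and $K_3$.
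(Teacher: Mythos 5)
Your proof follows essentially the same route as the paper: the same under/over-estimation split with union bounds, the same use of the monotonicity of $L$ and the penalty gap to reduce each case to a uniform deviation of $\widehat{L}$ from $L$, and the same chaining-plus-Hoeffding concentration over the compact parameter sets. The only slip is that you have swapped the roles of the two cited propositions (and of $n_1$ and $n_3$): in the paper, Proposition \ref{prop2} with threshold $n_1$ handles overestimation ($m>m^*$) and yields the $\exp\{-K_3\,s_P(m)\,n^{1-2\kappa}\}$ terms, while Proposition \ref{prop4} with threshold $n_3$ handles underestimation ($m<m^*$) and yields the $K_4/16$ term --- a labeling issue only, since the mathematics you describe for each case is the correct one.
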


The proof is presented in Appendix \ref{appendix:proof}.

\section{Simulation study} \label{sec:simu}

A simulation study was conducted to evaluate the performances of the MPenSSAR and to compare them with the FSARLM \citep{ahmed2022quasi}, the ProjSSAR \citep{frevent2023functional} and the PenSSAR \citep{frevent2023functional}.

\subsection{Design of the simulation study}

We considered the case $Q=4$ and a grid with $60 \times 60$ locations, where we randomly allocate $n=200$ spatial units.

The outcome was generated by
$$ \bm{Y} = W \bm{Y} R + \bm{\theta} + \bm{e} $$ where $\bm{e} = (e_1^\top, \dots, e_n^\top)^\top$, $e_i \sim \mathcal{N}_4(\bm{0}_4, \Sigma)$, $\Sigma = \begin{pmatrix}
0.4 & 0.1 & 0.1 & 0.1 \\
0.1 & 0.4 & 0.1 & 0.1 \\
0.1 & 0.1 & 0.4 & 0.1 \\
0.1 & 0.1 & 0.1 & 0.4
\end{pmatrix}$ and $\bm{\theta} = (\theta_{i,q})_{\substack{1 \le i \le n \\ 1 \le q \le 4}}$. \\

Three simulation models were considered:
\begin{enumerate}
\item[(i)] $\theta_{i,q} = \dsum_{k=1}^{s_P(2)} S^2(X_i)_k \dfrac{\eta_{k,q}}{\sum_{k'=1}^{s_P(2)} \eta_{k',q}}, \eta_{k,q} \sim \mathcal{U}([0,1])$ (the true model),
\item[(ii)] $\theta_{i,q} = \dsum_{p=1}^P X_{i,p}(t_{101}) \dfrac{\eta_{p,q}}{\sum_{p'=1}^P \eta_{p',q}}, \eta_{p,q} \sim \mathcal{U}([0,1])$,
\item[(iii)] $\theta_{i,q} = 
\begin{cases} 
\dsum_{p=1}^P X_{i,p}(t_{101}) \dfrac{\eta_{p,q}}{\sum_{p'=1}^P \eta_{p',q}} &\text{if } q=1,3 \\
\dsum_{p=1}^P Z_{i,p}(t_{101}) \dfrac{\eta_{p,q}}{\sum_{p'=1}^P \eta_{p',q}} &\text{if } q=2,4 \end{cases}$ with $\eta_{p,q} \sim \mathcal{U}([0,1])$.
\end{enumerate}

The $X_i$ and $Z_i$ were generated as follows in 101 equally spaced times of $[0,1]$ ($t_1, \dots, t_{101}$) :
$$ X_i(t) = (X_{i,1}(t), \dots, X_{i,P}(t)), X_{i,p}(t) = \gamma_{i,p} t + f_{i,p}(t), \gamma_{i,p} \sim \mathcal{U}([-3,3]), $$
$$ Z_i(t) = (Z_{i,1}(t), \dots, Z_{i,P}(t)), Z_{i,p}(t) = \phi_{i,p} t + g_{i,p}(t), \phi_{i,p} \sim \mathcal{U}([-3,3]), $$
where $f_{i,p}$ and $g_{i,p}$ are two Gaussian processes with exponential covariance matrix with length-scale 1. \\

We considered $P=2$ and $P=10$, a spatial weight matrix $W$ constructed using the $8$-nearest neighbors method, and the following matrices $R$:
\begin{itemize}
\item[] $R_w = \begin{pmatrix*}[S]
 0.40 & -0.10 & 0.20 &  0.05 \\
-0.20 &  0.35 & 0.10 & -0.10 \\
 0.15 &  0.10 & 0.30 &  0.20 \\
 0.05 & -0.15 & 0.15 &  0.25
\end{pmatrix*}$ (weak spatial effects), 
\item[] $R_\text{mod} = \begin{pmatrix*}[S]
 0.6 & -0.2 & 0.4 &  0.2 \\
-0.4 &  0.6 & 0.2 & -0.2 \\
 0.3 &  0.2 & 0.5 &  0.4 \\
 0.1 & -0.3 & 0.3 &  0.4
\end{pmatrix*}$ (moderate spatial effects),
and
\item[] $R_h = \begin{pmatrix*}[S]
 0.9 & -0.6 & 0.7 & -0.7 \\
-0.8 &  0.7 & 0.8 &  0.6 \\
 0.6 &  0.7 & 0.7 &  0.9 \\
-0.7 &  0.8 & 0.7 &  0.6
\end{pmatrix*}$ (high spatial effects).
\end{itemize}

Then, we aim at predicting $Y_i$ given the observations of $X_i$ at times $t_1$ to $t_{101}$ for simulation (i), and the observations of $X_i$ at times $t_1$ to $t_{100}$ for simulations (ii) and (iii).

For each simulation study, and each value of $P$ and $R$, 100 datasets were generated and four approaches were compared:

\begin{itemize}
\item[(i)] The FSARLM proposed by \cite{ahmed2022quasi} on each $Y_{i,q}$ separately $(q=1,2,3,4)$, using a cubic B-splines basis with 12 equally spaced knots to approximate the $X_i$ from the observed data and a functional PCA \citep{ramsay2005functional}. As proposed by \cite{ahmed2022quasi}, we used a threshold on the number of coefficients such that the cumulative inertia was below 95\%.
\item[(ii)] The PenSSAR proposed by \cite{frevent2023functional} on each $Y_{i,q}$ separately $(q=1,2,3,4)$. 
\item[(iii)] The ProjSSAR proposed by \cite{frevent2023functional} on each $Y_{i,q}$ separately $(q=1,2,3,4)$, where a PCA was performed on the standardized truncated shifted-signature coefficients vectors, and similarly to \cite{ahmed2022quasi}, a threshold on the maximal number of coefficients such that the cumulative inertia was below 95\% was used.
\item[(iv)] Our new MPenSSAR approach.
\end{itemize}

It should be noted that signatures are invariant by translation and by time reparametrization \citep{lyons2007differential}. Thus, before computing the signature of $X_i$, we added an observation point taking the value 0 at the beginning of $X_i$ (this avoids the invariance by translation) and we considered $\widetilde{X}_i(t) = (X_i(t), t)$ (this avoids the invariance by time reparametrization). \\

Moreover, for the signature approaches (PenSSAR, ProjSSAR and MPenSSAR), the optimal truncation order $\widehat{m}$ was chosen on a validation set from a set $\{ 1, \dots, m_\text{max} \}$ of possible values where $m_\text{max}$ is such that $s_P(m_\text{max})$ is at most equal to $10^4$.
More generally, we split each dataset into a training, a validation and a test set, using an ordinary validation (OV) or a spatial validation (SV). For the latter we used a $K$-means algorithm (with $K = 6$) on the coordinates of the data and we randomly selected two clusters to be the validation and test sets. \\
Then, the optimal parameters (the number of coefficients for the FSARLM, $\widehat{m}$ for the PenSSAR and the MPenSSAR, and the optimal number of coefficients associated with $\widehat{m}$ for the ProjSSAR) were selected on the validation set using the root mean squared error (RMSE) criterion for the FSARLM, the ProjSSAR and the PenSSAR, and using $\widehat{L}$ for the MPenSSAR. Finally, the performances were measured by assessing the estimation of the matrix $R$, and the predictive capacity on the test set using the RMSE.

\subsection{Results of the simulation study}

The results are presented in Figure \ref{fig:estimY} and in Figures \ref{fig:estimRw}, \ref{fig:estimRm} and \ref{fig:estimRh} in Appendix \ref{appendix:ressim}. 

Considering the estimation of $R$, the FSARLM, ProjSSAR and PenSSAR approaches only estimate the diagonal coefficients since they consider the four variables in $Y$ separately. However, these coefficients are not always well estimated (particularly for $R_\text{mod}$ and $R_h$, see Figures \ref{fig:estimRm} and \ref{fig:estimRh} in Appendix \ref{appendix:ressim}), due to cross-variable spatial effects that are not taken into account. The MPenSSAR presents the advantage of estimating the whole $R$ matrix, which allows a better understanding of the spatial effects of each variable in $Y$ on itself, as well as the cross-variable spatial effects.

When considering the predictive capacity on the models, Figure \ref{fig:estimY} shows that in the case on weak spatial effects $(R_w)$, the MPenSSAR presents similar or lower performances than the other approaches. However when the spatial effects increase, it presents better RMSEs, especially when using a spatial validation.

\begin{figure}
\centering
\includegraphics[width=\textwidth]{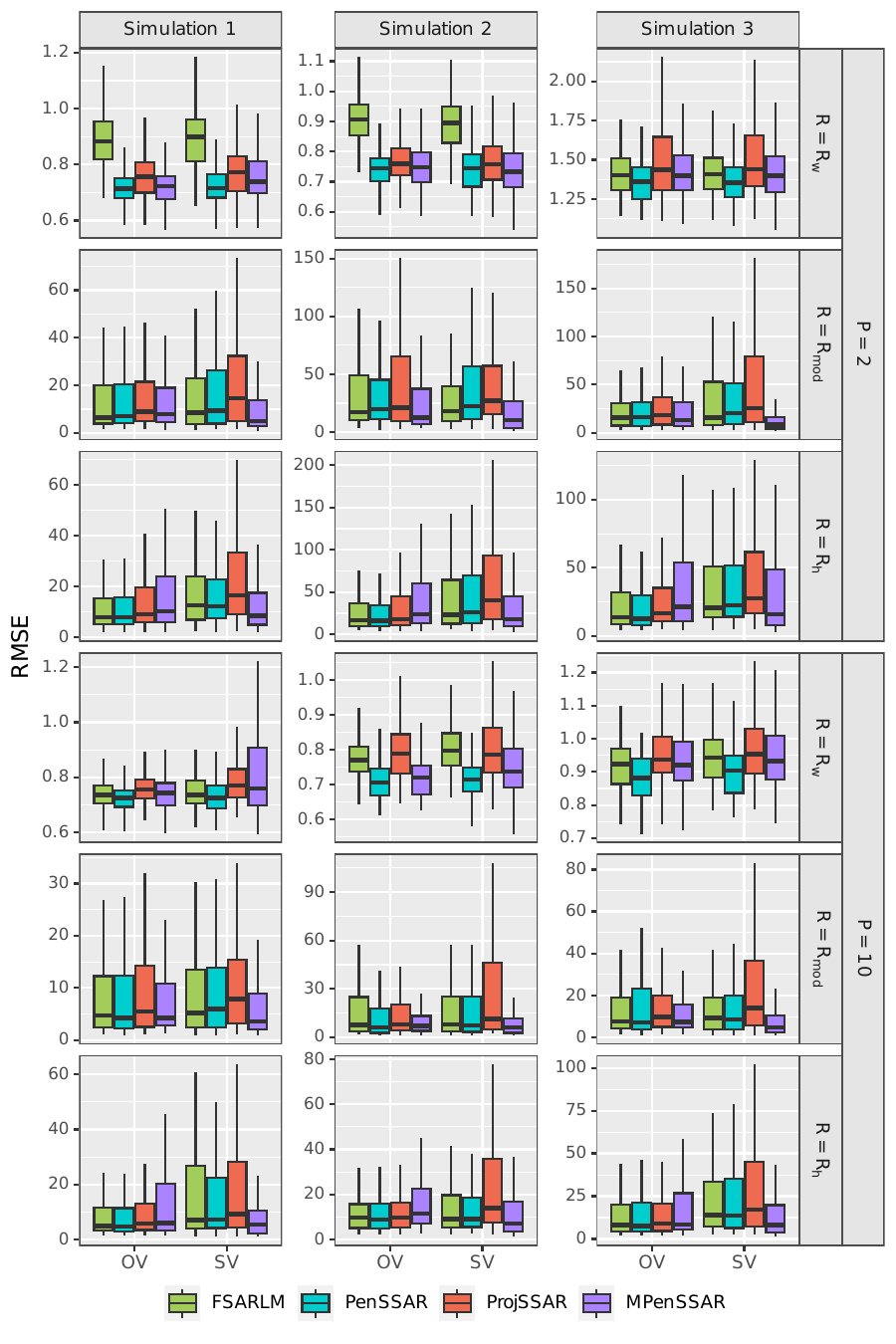}
\caption{RMSE on the test set with the FSARLM, the PenSSAR, the ProjSSAR and the MPenSSAR using ordinary (OV) and spatial (SV) validation}
\label{fig:estimY}
\end{figure}

\section{Real data application} \label{sec:appli}

As \cite{frevent2023functional}, we consider air quality data collected from 104 monitoring stations across the United States (\url{https://www.epa.gov/outdoor-air-quality-data}). The data consist in hourly nitrogen dioxide and ozone concentrations (in ppb) from August 1, 2022, 0:00 to August 4, 2022, 23:00.
We used linear interpolation to estimate the missing values. The spatial locations of the monitoring stations, as well as the ozone and nitrogen dioxide concentrations are presented in Figure \ref{fig:dataappli}.

\begin{figure}
\centering
\begin{minipage}{0.49\linewidth}
\includegraphics[width=\linewidth]{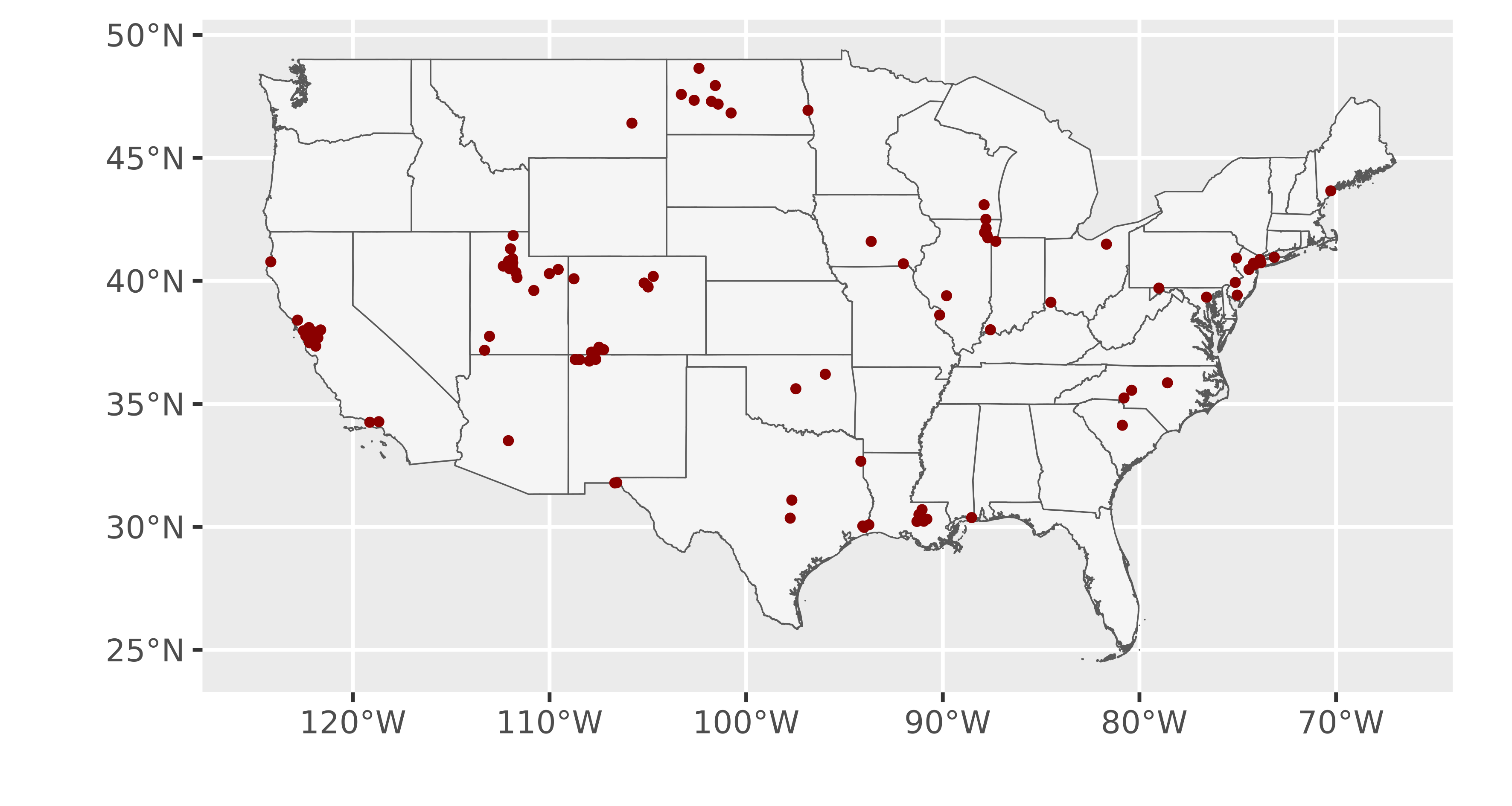}
\end{minipage}
\begin{minipage}{0.49\linewidth}
\includegraphics[width=\linewidth]{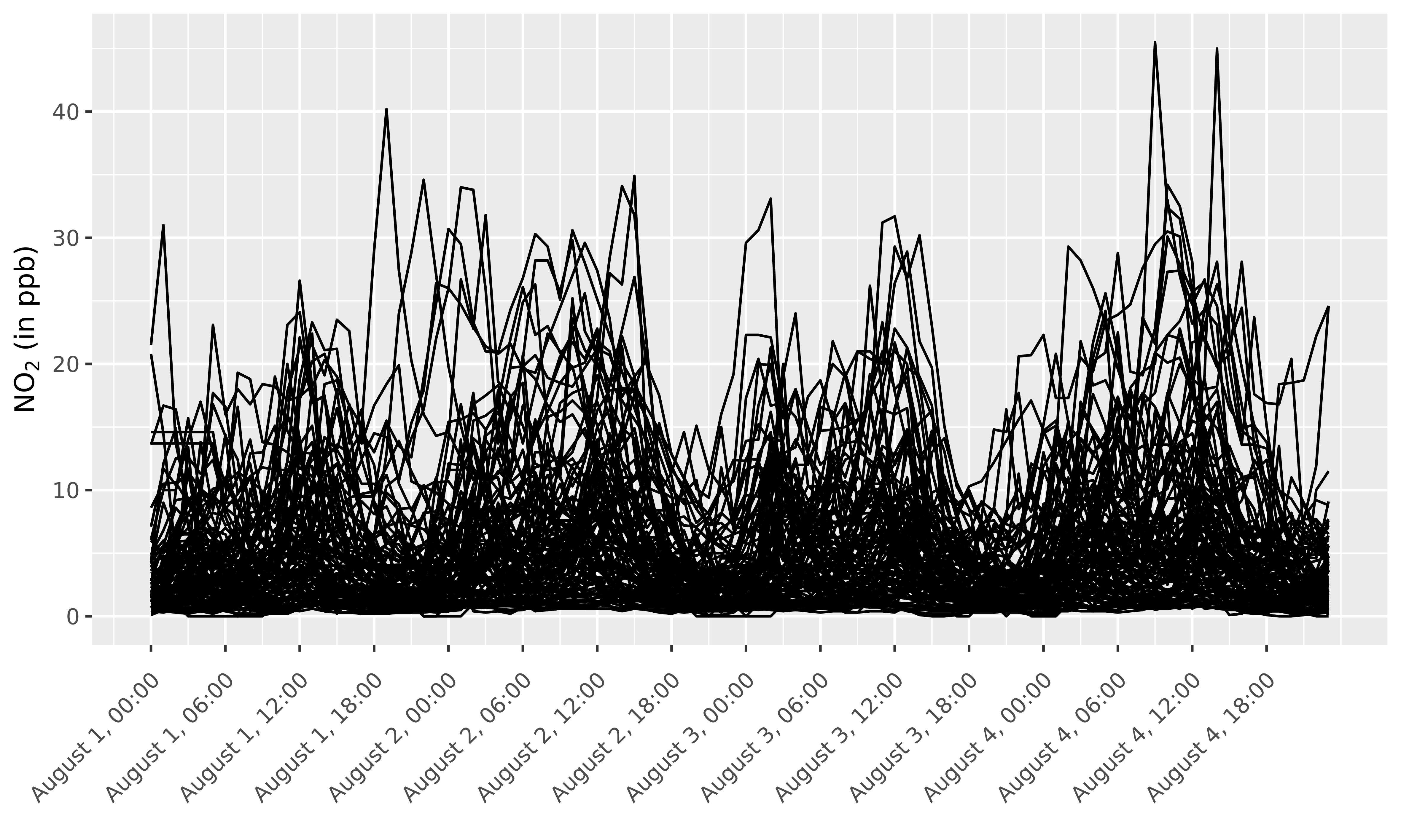}
\includegraphics[width=\linewidth]{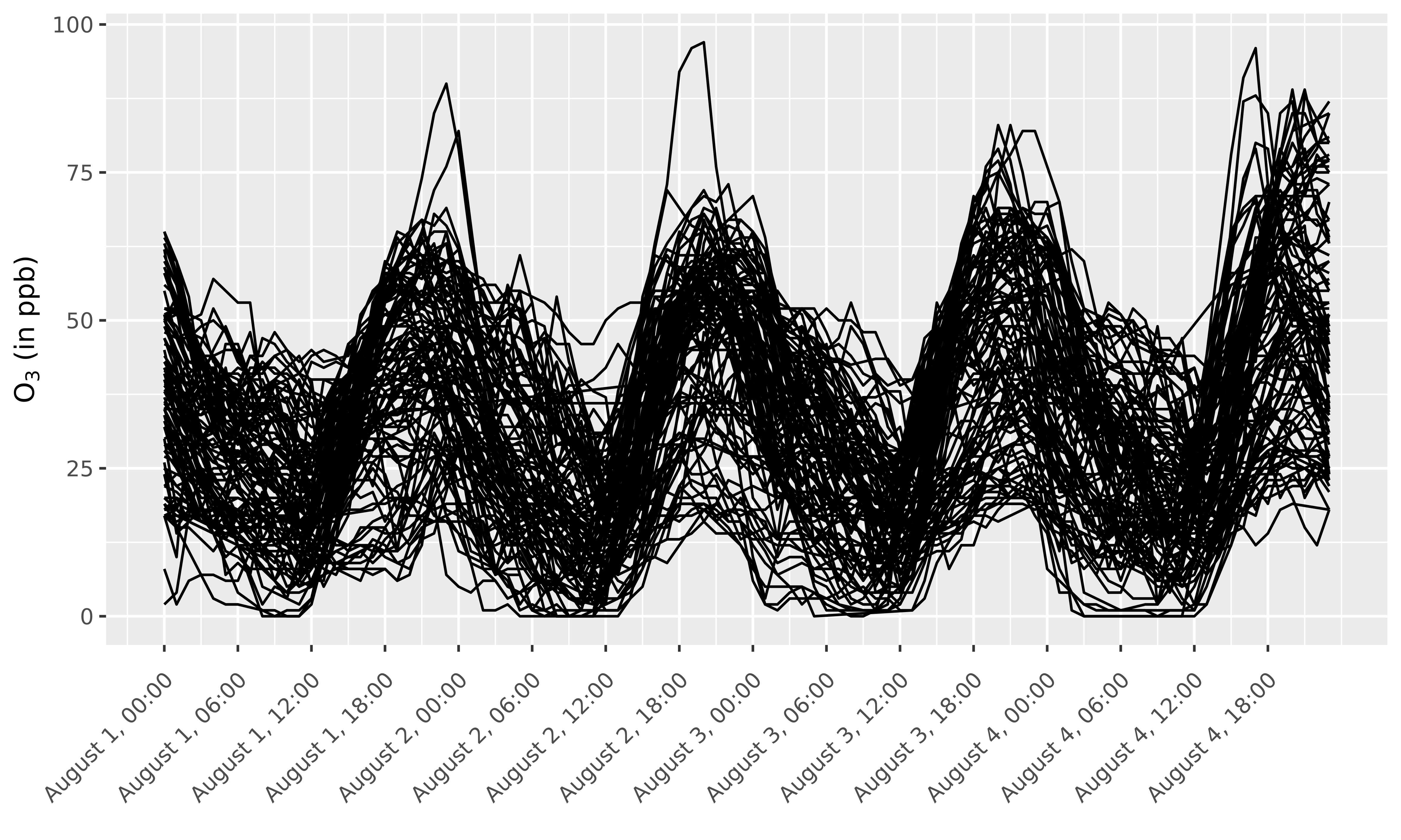}
\end{minipage}
\caption{Spatial locations of the 104 monitoring stations across the United States (left panel) and hourly nitrogen dioxide and ozone concentrations (from August 1, 2022, 0:00 to August 4, 2022, 23:00, right panel)
}
\label{fig:dataappli}
\end{figure}

We aim at predicting (i) the average concentration of nitrogen dioxide and ozone ($Q=2$) on August 4, 2022 from the nitrogen dioxide and ozone concentrations from August 1, 2022, 0:00 to August 3, 2022, 23:00, (ii) the maximum concentration of nitrogen dioxide and ozone on August 4, 2022 from the nitrogen dioxide and ozone concentrations from August 1, 2022, 0:00 to August 3, 2022, 23:00, (iii) the concentrations of nitrogen dioxide and ozone at 00:00 of August 4, 2022 from the nitrogen dioxide and ozone concentrations from August 1, 2022, 0:00 to August 3, 2022, 23:00, and (iv) the concentrations of nitrogen dioxide and ozone at 12:00 of August 4, 2022 from the nitrogen dioxide and ozone concentrations from August 1, 2022, 0:00 to August 4, 2022, 11:00.

We use the FSARLM, PenSSAR, ProjSSAR and MPenSSAR considering two spatial weight matrices: (i) spatial weights based on inverse distances $W_{i,j} = \left\{ \begin{array}{ll}
 \dfrac{1}{1+d_{ij}}    &  \text{if } d_{ij} < \tau \text{ and } i \neq j \\
  0   & \text{otherwise}
\end{array} \right.$ where $\tau$ is a threshold such that all monitoring stations have at least four neighbors \citep{ahmed2022quasi, frevent2023functional} and (ii) spatial weights based on the 4 nearest neighbors.

We consider ordinary validation and spatial validation using a $K$-means algorithm (with $K$ = 6) on the coordinates of the data, and we repeated the procedure on 30
different train/validation/test sets for each type of validation, thus covering all the possibilities for spatial validation.

Figure \ref{fig:resappli} presents the RMSE for the four objectives and Figure \ref{fig:Rappli} presents the estimation of $R$. 
The signature-based approaches (the PenSSAR, the ProjSSAR and the MPenSSAR) present similar or better RMSEs than the FSARLM. The MPenSSAR presents RMSEs comparable to the other signature-based approaches. However, it has the advantage of better estimating the spatial structure, as the MPenSSAR estimates the entire $R$ matrix and not just its diagonal elements, as other approaches do (see Figure \ref{fig:Rappli}).

\begin{figure}
\centering
\includegraphics[width=0.9\linewidth]{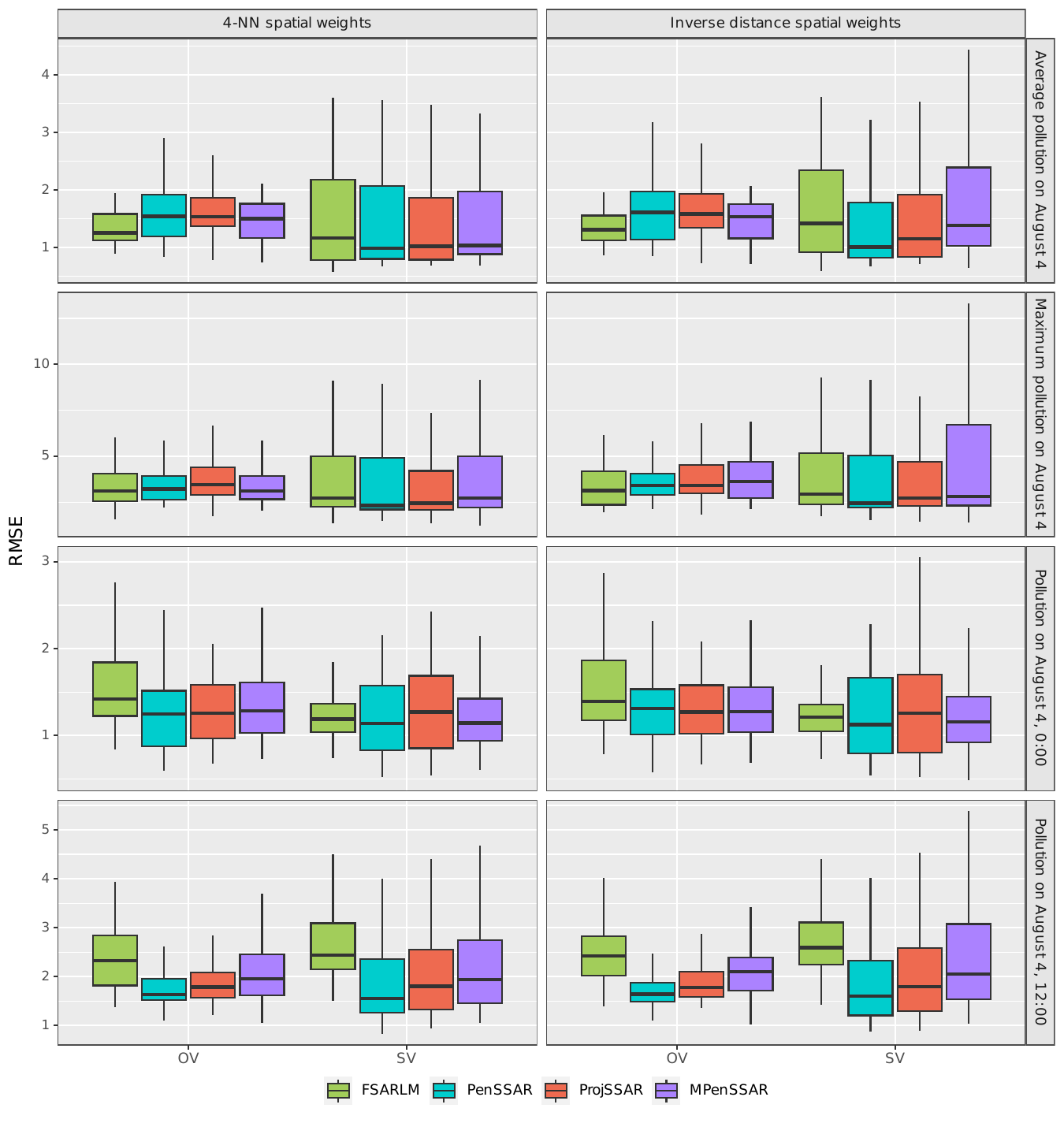}
\caption{RMSE on the test set with the FSARLM, the PenSSAR, the ProjSSAR and the MPenSSAR for predicting the concentrations of nitrogen dioxide and ozone using ordinary (OV) and spatial (SV) validation}
\label{fig:resappli}
\end{figure}

\begin{figure}
    \centering
\includegraphics[width=0.9\linewidth]{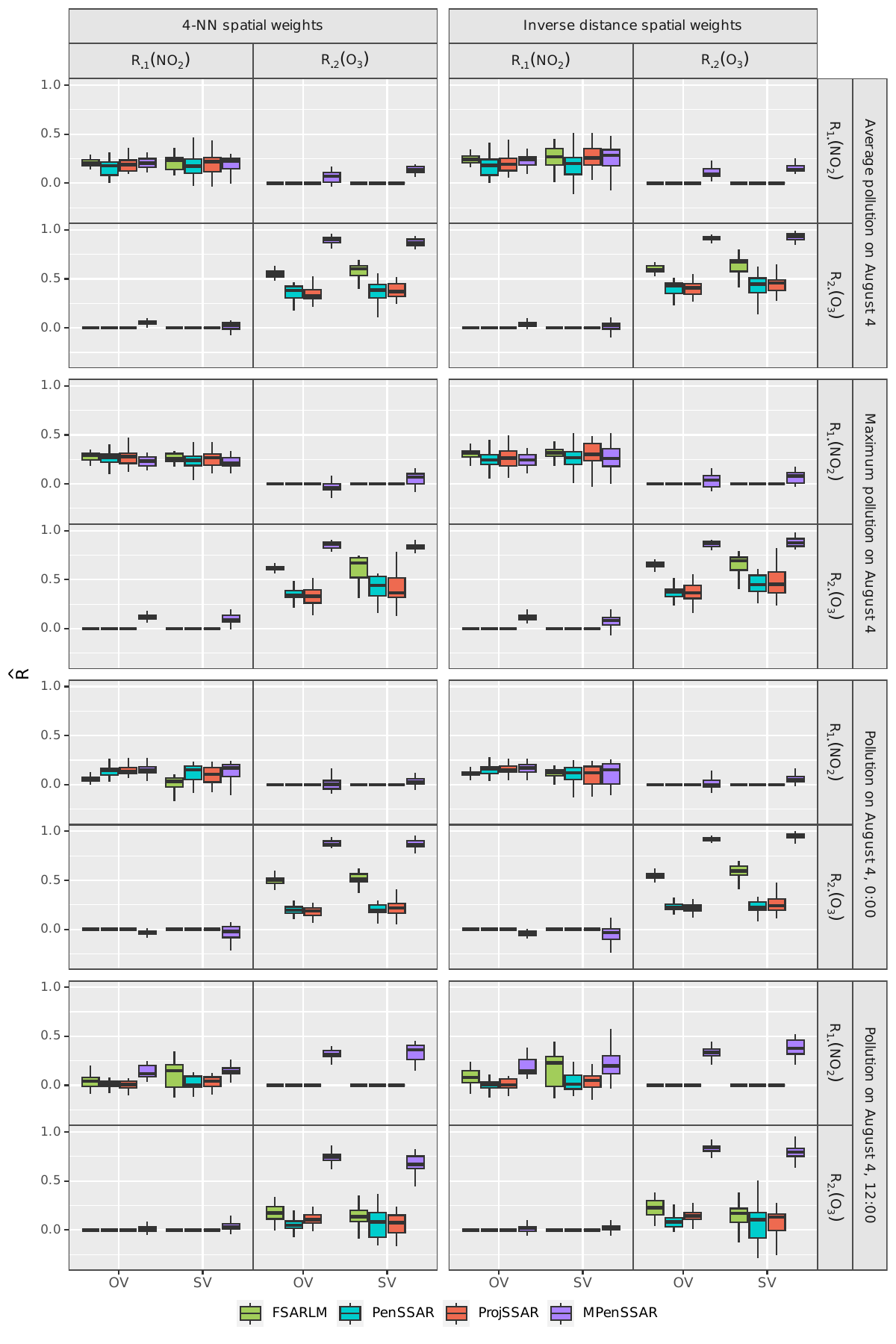}
\caption{Estimation of $R$ with the FSARLM, the PenSSAR, the ProjSSAR and the MPenSSAR for predicting the concentrations of nitrogen dioxide and ozone using ordinary (OV) and spatial (SV) validation}
\label{fig:Rappli}
\end{figure}

\section{Discussion} \label{sec:discussion}
This paper provides a study of the multivariate penalized signatures-based spatial regression (MPenSSAR) model. Our study builds upon a series of works on functional analysis based on signatures. The proposed model stands out from the existing literature by combining a multivariate response, the concept of signatures, and a spatial component. We have adapted the notion of signatures for the study of a multivariate response, which led us to provide a relevant estimator for the truncation order of the signature. \\ 

After presenting theoretical guarantees for our model, we conducted a simulation study where we showed that, contrary to the existing approaches in the literature, our new approach allows to accurately estimate the spatial effects of a variable on itself and the cross-variable spatial effects. It also performs well in prediction, especially when a spatial validation is used. \\

We then applied the MPenSSAR on a real data
set corresponding to ozone and nitrogen dioxide concentrations measured in monitoring stations across the United States. We showed that the MPenSSAR presents RMSEs comparable to other signature-based approaches but has the advantage of estimating all spatial effects, including, the cross-variable ones. \\

It should be noted that the proposed model proves to be quite flexible, and it is also suited to the non-spatial case. In fact, assuming that $R^* = 0$ and eliminating this term from the estimation procedure places us in the non-spatial framework, and the results proposed in this paper remain valid. \\

We hope this work will lead to further related research. Many other statistical models could be explored, and many other extensions could be considered. In particular, it would be interesting to study the effect of high dimensionality on our estimator and propose an estimator robust to dimensionality.

\bibliography{bibliographie.bib}
\bibliographystyle{chicago}

\appendix

\section{Proof of Theorem \ref{theorem1}} \label{appendix:proof}

\begin{lemma}[Hoeffding's lemma] \label{lemma:Hoeffdings} \phantom{linebreak} \\
Let $X$ a random variable such that $\mathbb{P}(a \le X \le b) = 1$, then $\forall \lambda \in \mathbb{R}$,
$$ \mathbb{E}\left\{\exp{[\lambda(X-\mathbb{E}(X))]}\right\} \le \exp{\left[ \dfrac{\lambda^2 (b-a)^2}{8} \right]}.$$
\vspace{0.3cm}
\end{lemma}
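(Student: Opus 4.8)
The plan is to bound the moment generating function of the centred variable directly from convexity and then reduce everything to a one–variable calculus estimate. First I would write $\mu = \mathbb{E}(X)$ and dispose of the trivial cases: if $\lambda = 0$ or $a = b$ both sides equal $1$, so I may assume $\lambda \neq 0$ and $a < b$. The key structural observation is that $x \mapsto e^{\lambda x}$ is convex, hence on $[a,b]$ it lies below the chord through $(a,e^{\lambda a})$ and $(b,e^{\lambda b})$; writing $x = \frac{b-x}{b-a}\,a + \frac{x-a}{b-a}\,b$ gives
$$ e^{\lambda x} \le \frac{b-x}{b-a}\, e^{\lambda a} + \frac{x-a}{b-a}\, e^{\lambda b}, \qquad x \in [a,b]. $$
Since $\mathbb{P}(a \le X \le b) = 1$, taking expectations is legitimate and, using $\mathbb{E}(X)=\mu$, yields $\mathbb{E}(e^{\lambda X}) \le (1-p)e^{\lambda a} + p\,e^{\lambda b}$ with $p = \frac{\mu-a}{b-a} \in [0,1]$.

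Next I would absorb the centring factor. Multiplying by $e^{-\lambda\mu}$ and setting $h = \lambda(b-a)$, the identity $\mu = a + p(b-a)$ gives
$$ \mathbb{E}\left\{ \exp[\lambda(X-\mu)] \right\} \le e^{-ph}\left(1 - p + p\, e^{h}\right), $$
and I set $L(h) = -ph + \log(1 - p + p\, e^{h})$ so that the right–hand side equals $e^{L(h)}$. It then suffices to prove the scalar inequality $L(h) \le h^2/8$ for every real $h$, because this gives exactly $\mathbb{E}\{\exp[\lambda(X-\mu)]\} \le \exp[h^2/8] = \exp[\lambda^2(b-a)^2/8]$, the claimed bound.

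The heart of the proof is this calculus estimate on $L$. I would compute $L(0)=0$ and $L'(h) = -p + \frac{p e^h}{1-p+p e^h}$, so that $L'(0)=0$ as well. Differentiating once more, with $u(h) = \frac{p e^h}{1-p+p e^h} \in [0,1]$, one finds $L''(h) = u(h)\bigl(1-u(h)\bigr)$, and the elementary inequality $t(1-t)\le \frac14$ on $[0,1]$ gives $L''(h)\le \frac14$ uniformly in $h$ and $p$. A second–order Taylor expansion with Lagrange remainder then yields, for some $\xi$ between $0$ and $h$,
$$ L(h) = L(0) + L'(0)\,h + \tfrac{1}{2}L''(\xi)\,h^2 = \tfrac{1}{2}L''(\xi)\,h^2 \le \frac{h^2}{8}, $$
which completes the argument.

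The only genuinely delicate point is the uniform second–derivative bound: one must recognise $L''$ as $u(1-u)$ and control it independently of $p$ and $h$ through $t(1-t)\le\frac14$, since a direct expansion of $\log(1-p+p e^h)$ does not transparently produce the sharp constant $\frac18$. The remaining ingredients — the chord bound from convexity, passing the expectation through the inequality, and the Taylor remainder — are routine, so I expect the full write-up to be short.
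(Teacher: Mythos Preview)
Your argument is correct and is in fact the standard textbook proof of Hoeffding's lemma: the chord bound from convexity of $x\mapsto e^{\lambda x}$, followed by the change of variables $h=\lambda(b-a)$, $p=(\mu-a)/(b-a)$, and the second-order Taylor estimate $L''(h)=u(1-u)\le\frac14$ giving $L(h)\le h^2/8$. All the steps you outline are sound, including the handling of the degenerate cases $\lambda=0$ and $a=b$.

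There is nothing to compare against here: the paper does not supply a proof of this lemma. It is simply stated in the appendix as the classical Hoeffding's lemma and then invoked as a tool (notably in the proof of Lemma~\ref{lemma:Zmsub}). So your write-up is not merely consistent with the paper's approach --- it fills in a proof the paper omits.
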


\begin{lemma}[Hoeffding's inequality] \label{lemma:Hoeffdingsineq} \phantom{linebreak} \\
Let $X_1, \dots, X_n$ be $n$ independent random variables such that $\forall i, \mathbb{P}(a_i \le X_i \le b_i) = 1$, then $\forall t > 0$,
$$ \mathbb{P}\left[\dsum_{i=1}^n \left( X_i - \mathbb{E}(X_i) \right) \ge t \right] \le \exp{\left[ \dfrac{- 2 t^2}{\dsum_{i=1}^n (b_i - a_i)^2 } \right]}.$$
\vspace{0.3cm}
\end{lemma}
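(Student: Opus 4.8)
The plan is to prove the inequality by the exponential (Chernoff) method, using Hoeffding's lemma (Lemma \ref{lemma:Hoeffdings}) as the single-variable moment-generating-function bound. First I would fix an arbitrary $\lambda > 0$ and pass to the exponential: since $x \mapsto e^{\lambda x}$ is increasing, the event $\left\{\dsum_{i=1}^n (X_i - \mathbb{E}(X_i)) \ge t\right\}$ coincides with $\left\{\exp\left(\lambda \dsum_{i=1}^n (X_i - \mathbb{E}(X_i))\right) \ge e^{\lambda t}\right\}$, and an application of Markov's inequality to the nonnegative random variable $\exp\left(\lambda \dsum_{i=1}^n (X_i - \mathbb{E}(X_i))\right)$ gives
$$ \mathbb{P}\left[\dsum_{i=1}^n (X_i - \mathbb{E}(X_i)) \ge t\right] \le e^{-\lambda t} \, \mathbb{E}\left[\exp\left(\lambda \dsum_{i=1}^n (X_i - \mathbb{E}(X_i))\right)\right]. $$

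Next I would exploit independence. Because the $X_i$ are independent, so are the centered variables $X_i - \mathbb{E}(X_i)$, and the expectation of the product of exponentials factorizes as $\dprod_{i=1}^n \mathbb{E}\left[\exp\left(\lambda (X_i - \mathbb{E}(X_i))\right)\right]$. To each factor I apply Hoeffding's lemma on the interval $[a_i, b_i]$, which yields $\mathbb{E}\left[\exp\left(\lambda (X_i - \mathbb{E}(X_i))\right)\right] \le \exp\left(\lambda^2 (b_i - a_i)^2 / 8\right)$. Collecting the factors produces, for every $\lambda > 0$, the bound
$$ \mathbb{P}\left[\dsum_{i=1}^n (X_i - \mathbb{E}(X_i)) \ge t\right] \le \exp\left(-\lambda t + \dfrac{\lambda^2}{8} \dsum_{i=1}^n (b_i - a_i)^2\right). $$

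The final step is to optimize the free parameter. The exponent is a convex quadratic in $\lambda$, minimized at $\lambda^\star = 4t \big/ \dsum_{i=1}^n (b_i - a_i)^2 > 0$; substituting $\lambda^\star$ collapses the exponent to $-2t^2 \big/ \dsum_{i=1}^n (b_i - a_i)^2$, which is precisely the claimed bound. There is no genuine obstacle in this argument: the only points requiring care are that $\lambda^\star$ is admissible for the Markov step (it is strictly positive, as required), and that the constant $1/8$ appearing in Hoeffding's lemma propagates correctly through the optimization so as to become the factor $2$ in the final exponent.
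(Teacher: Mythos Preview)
Your argument is correct and is exactly the standard Chernoff-bound derivation of Hoeffding's inequality from Hoeffding's lemma. Note, however, that the paper does not actually supply a proof of this lemma: it is stated as a classical auxiliary result (alongside Hoeffding's lemma) and used without justification in the appendix, so there is no paper-side proof to compare against.
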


\begin{definition}[Definition 5.5 from \cite{van2014probability}] \phantom{linebreak} \\
A set $N$ is a $\delta$-net for a metric space $(\mathcal{T},d)$ if for all $t \in \mathcal{T}$, there exists $\pi(t) \in N$ such that $d(t, \pi(t)) \le \delta$.
\end{definition}

\begin{theorem}[Theorem 5.29 from \cite{van2014probability}] \label{th:5.29} \phantom{linebreak} \\
Let $(X_t)_{t \in \mathcal{T}}$ a separable sub-Gaussian process on the metric space $(\mathcal{T}, d)$. Then $$\forall t' \in \mathcal{T}, x \ge 0, \mathbb{P}\left(\underset{t \in \mathcal{T}}{\sup} \ \{ X_t - X_{t'} \} \ge C \dint_{0}^{\infty} \sqrt{\log{N(\mathcal{T},d,\delta)}} \text{d}\delta + x \right) \le C \exp{\left(-\dfrac{x^2}{C \text{diam}(\mathcal{T})^2}\right)}$$
where $N(\mathcal{T},d,\delta) = \inf \ \{ |N| \ / N \text{ is a } \delta\text{-net for } (\mathcal{T},d) \}$.
\end{theorem}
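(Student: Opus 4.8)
The plan is to prove this by the classical Dudley chaining argument, reducing the supremum of the process to a telescoping sum of increments across a hierarchy of nets at geometrically decreasing scales and controlling each level by the sub-Gaussian maximal inequality. First I would use separability to restrict attention to a countable dense subset $\mathcal{T}_0 \subset \mathcal{T}$, so that $\sup_{t}(X_t - X_{t'})$ may be computed along a countable index and the infinite telescoping sum below is almost surely well defined. Writing $D = \mathrm{diam}(\mathcal{T})$, for each integer $k \ge 0$ I would fix a minimal $\delta_k$-net $N_k$ with $\delta_k = D\,2^{-k}$, so that $|N_k| = N(\mathcal{T}, d, \delta_k)$; since $\delta_0 = D$, one may take $N_0 = \{t'\}$. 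For each $t$ let $\pi_k(t)$ denote a nearest point of $N_k$, so $d(t, \pi_k(t)) \le \delta_k$ and, by the triangle inequality, $d(\pi_{k+1}(t), \pi_k(t)) \le \delta_k + \delta_{k+1} = 3\delta_{k+1}$. Separability gives $X_{\pi_k(t)} \to X_t$, whence the decomposition $X_t - X_{t'} = \sum_{k \ge 0}\big(X_{\pi_{k+1}(t)} - X_{\pi_k(t)}\big)$.

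Next I would bound each level uniformly in $t$. At level $k$ the increments $X_{\pi_{k+1}(t)} - X_{\pi_k(t)}$ take at most $|N_k|\,|N_{k+1}| \le |N_{k+1}|^2$ distinct values, each a centered sub-Gaussian variable with parameter at most $3\delta_{k+1}$. The Chernoff bound for sub-Gaussian tails together with a union bound then yields, for any threshold $B_k > 0$, the estimate $\mathbb{P}\big(\sup_t (X_{\pi_{k+1}(t)} - X_{\pi_k(t)}) \ge B_k\big) \le |N_{k+1}|^2 \exp\!\big(-B_k^2/(18\,\delta_{k+1}^2)\big)$. I would then split the budget as $B_k = 3\delta_{k+1}\big(2\sqrt{\log |N_{k+1}|} + u_k\big)$, where the first piece is the entropy link and $u_k$ is a free tail budget. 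Since all terms are nonnegative, $B_k^2 \ge 9\delta_{k+1}^2(4\log|N_{k+1}| + u_k^2)$, so the factor $|N_{k+1}|^2$ is exactly absorbed and the per-level bound collapses to $\exp(-u_k^2/2)$.

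Summing over $k$ and using $\{\sup_t(X_t - X_{t'}) \ge \sum_k B_k\} \subset \bigcup_k \{\sup_t (X_{\pi_{k+1}(t)}-X_{\pi_k(t)}) \ge B_k\}$, the threshold is $\sum_k B_k = 6\sum_k \delta_{k+1}\sqrt{\log|N_{k+1}|} + 3\sum_k \delta_{k+1} u_k$. Because $N(\mathcal{T}, d, \cdot)$ is nonincreasing and $\delta_k - \delta_{k+1} = \delta_{k+1}$, the first sum is dominated by $6\int_0^D \sqrt{\log N(\mathcal{T}, d, \delta)}\,d\delta = 6\int_0^\infty \sqrt{\log N(\mathcal{T}, d, \delta)}\,d\delta$, the integrand vanishing for $\delta > D$. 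To produce the tail I would choose $u_k = 2^{(k+1)/2} v$ with $v = c\,x/D$: then $\sum_k \delta_{k+1} u_k$ is a convergent geometric series equal to a constant multiple of $Dv = c'x$, while $u_k^2/2 = 2^k v^2$, so that $\sum_k \exp(-u_k^2/2) = \sum_k \exp(-2^k v^2) \le C\exp(-v^2) = C\exp(-x^2/(C D^2))$. Choosing $c$ so that $3\sum_k \delta_{k+1} u_k = x$ gives precisely the stated threshold and tail, all constants being absorbed into a single universal $C$ (and for small $x$, where the right-hand side exceeds $1$, the bound is trivial).

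The main obstacle is the balancing in the last step: the tail budget $u_k$ must simultaneously grow fast enough in $k$ that $\sum_k |N_{k+1}|^2 \exp(-B_k^2/(18\delta_{k+1}^2))$ collapses to a single clean Gaussian factor, and be weighted by $\delta_{k+1}$ so that $\sum_k \delta_{k+1} u_k$ stays proportional to $x$ rather than diverging. The geometric choice $u_k \propto 2^{(k+1)/2}$ is what reconciles these, since it makes $\delta_{k+1} u_k$ summable (controlling the threshold) while making $u_k^2$ grow like $2^k$ (controlling the sum of exponentials). A secondary technical point is the separability reduction together with the almost-sure convergence of the chaining sum, which must be invoked to pass legitimately from the finite nets to the supremum over all of $\mathcal{T}$.
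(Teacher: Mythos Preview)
The paper does not prove this statement; it is quoted verbatim as Theorem 5.29 of van Handel's lecture notes and used as a black box in the proof of Proposition~\ref{prop1}. Your Dudley chaining argument is precisely the standard proof of that result (and is essentially the one van Handel gives): geometric scales $\delta_k = D\,2^{-k}$, a telescoping decomposition along successive net projections, a sub-Gaussian union bound at each level with a budget split $B_k = 3\delta_{k+1}\big(2\sqrt{\log|N_{k+1}|}+u_k\big)$, the entropy part summing to a multiple of the Dudley integral and the tail budget $u_k\propto 2^{(k+1)/2}x/D$ collapsing to a single Gaussian factor. The argument is correct; the only point to tighten is the passage from the finite chain to the full supremum --- rather than asserting that ``separability gives $X_{\pi_k(t)}\to X_t$'', the cleanest route is to first prove the inequality when $\mathcal{T}$ is finite (so the chain terminates) and then invoke separability to pass to the limit, which is how the cited reference handles it.
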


\begin{lemma} \label{lemma:ineqN1} \phantom{linebreak} \\
Let $d((\beta_m, R_m),(\beta_m', R_m')) = d_1(\beta_m, \beta_m') + d_2(R_m, R_m')$ a distance on $\mathcal{B}_{s_P(m) \times Q, \alpha} \times \mathcal{M}_Q([-1,1])$. Then
$$ N\left(\mathcal{B}_{s_P(m) \times Q, \alpha} \times \mathcal{M}_Q([-1,1]), d, \delta \right) \le N\left(\mathcal{B}_{s_P(m) \times Q, \alpha}, d_1, \dfrac{\delta}{2}\right) N\left(\mathcal{M}_Q([-1,1]), d_2, \dfrac{\delta}{2}\right) $$ where $N(.)$ is defined in Theorem \ref{th:5.29}.
\end{lemma}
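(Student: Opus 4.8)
The plan is to build an explicit $\delta$-net for the product space out of near-optimal nets of the two factors and then simply count. First I would fix $\delta>0$ and choose a $\tfrac{\delta}{2}$-net $N_1$ of $\mathcal{B}_{s_P(m)\times Q,\alpha}$ with respect to $d_1$ whose cardinality equals $N(\mathcal{B}_{s_P(m)\times Q,\alpha},d_1,\tfrac{\delta}{2})$, and likewise a $\tfrac{\delta}{2}$-net $N_2$ of $\mathcal{M}_Q([-1,1])$ with respect to $d_2$ of cardinality $N(\mathcal{M}_Q([-1,1]),d_2,\tfrac{\delta}{2})$. Such minimal nets exist because each covering number is an infimum of positive integers (and is finite, both sets being bounded subsets of finite-dimensional spaces).

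Next I would verify that $N_1\times N_2$ is a $\delta$-net of $\mathcal{B}_{s_P(m)\times Q,\alpha}\times\mathcal{M}_Q([-1,1])$ for the distance $d$. Given a point $(\beta_m,R_m)$ in the product, there are $\pi_1(\beta_m)\in N_1$ with $d_1(\beta_m,\pi_1(\beta_m))\le\tfrac{\delta}{2}$ and $\pi_2(R_m)\in N_2$ with $d_2(R_m,\pi_2(R_m))\le\tfrac{\delta}{2}$. Since by definition $d = d_1+d_2$ on the product, we get $d((\beta_m,R_m),(\pi_1(\beta_m),\pi_2(R_m))) = d_1(\beta_m,\pi_1(\beta_m)) + d_2(R_m,\pi_2(R_m)) \le \delta$, and $(\pi_1(\beta_m),\pi_2(R_m))\in N_1\times N_2$, so $N_1\times N_2$ is indeed a $\delta$-net.

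Finally, by the definition of the covering number as the smallest cardinality of a $\delta$-net, we conclude $N(\mathcal{B}_{s_P(m)\times Q,\alpha}\times\mathcal{M}_Q([-1,1]),d,\delta)\le |N_1\times N_2| = |N_1|\,|N_2| = N(\mathcal{B}_{s_P(m)\times Q,\alpha},d_1,\tfrac{\delta}{2})\,N(\mathcal{M}_Q([-1,1]),d_2,\tfrac{\delta}{2})$, which is the claimed inequality.

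There is no genuine obstacle here: the argument is entirely routine, and the only points worth a word of justification are that the infima defining the two covering numbers are attained (so the minimal nets $N_1,N_2$ exist) and that the additive form of $d$ combined with the choice of radius $\tfrac{\delta}{2}$ in each coordinate is exactly what makes the product net have radius $\delta$. If one wishes to sidestep the attainment point altogether, running the same argument with nets of cardinality at most $N(\cdot,\cdot,\tfrac{\delta}{2})$ in each factor — which exist directly from the definition of the infimum over integers — yields the stated bound verbatim.
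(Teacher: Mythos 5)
Your proof is correct and follows essentially the same route as the paper's: both arguments show that the Cartesian product of a $\tfrac{\delta}{2}$-net for $(\mathcal{B}_{s_P(m)\times Q,\alpha},d_1)$ and a $\tfrac{\delta}{2}$-net for $(\mathcal{M}_Q([-1,1]),d_2)$ is a $\delta$-net for the product space under the additive metric $d=d_1+d_2$, and then pass to cardinalities. Your added remark on why minimal (or near-minimal) nets exist is a small point of extra care that the paper leaves implicit, but it does not change the argument.
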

\begin{proof}
It suffices to show that if $N_1$ is a $\dfrac{\delta}{2}$-net for $(\mathcal{B}_{s_P(m) \times Q, \alpha}, d_1)$ and $N_2$ is a $\dfrac{\delta}{2}$-net for $(\mathcal{M}_Q([-1,1]), d_2)$, then $N_1 \times N_2$ is a $\delta$-net for $\left(\mathcal{B}_{s_P(m) \times Q, \alpha} \times \mathcal{M}_Q([-1,1]), d \right)$. \\

Let $N_1$ and $N_2$ be two $\dfrac{\delta}{2}$-nets for $(\mathcal{B}_{s_P(m) \times Q, \alpha}, d_1)$ and $(\mathcal{M}_Q([-1,1]), d_2)$ respectively. Let $(\beta_m, R_m) \in \mathcal{B}_{s_P(m) \times Q, \alpha} \times \mathcal{M}_Q([-1,1])$. \\
Since $N_1$ is a $\dfrac{\delta}{2}$-net for $(\mathcal{B}_{s_P(m) \times Q, \alpha}, d_1)$, there exists $\pi(\beta_m) \in N_1$ such that $d_1(\beta_m, \pi(\beta_m)) \le \dfrac{\delta}{2}$. \\
Since $N_2$ is a $\dfrac{\delta}{2}$-net for $(\mathcal{M}_Q([-1,1]), d_2)$, there exists $\pi(R_m) \in N_2$ such that $d_2(R_m, \pi(R_m)) \le \dfrac{\delta}{2}$. \\

Thus, there exists $\pi((\beta_m, R_m)) = (\pi(\beta_m), \pi(R_m)) \in N_1 \times N_2$ such that 
$$ d((\beta_m, R_m), \pi((\beta_m, R_m))) = d_1(\beta_m \pi(\beta_m)) + d_2(R_m, \pi(R_m)) \le \delta. $$
We deduce that $N_1 \times N_2$ is a $\delta$-net for $\left(\mathcal{B}_{s_P(m) \times Q, \alpha} \times \mathcal{M}_Q([-1,1]), d \right)$, which concludes the proof. \\
\end{proof}

\begin{lemma} \label{lemma:ineqN2} \phantom{linebreak} \\
Let $(\mathcal{A}_1, d)$ and $(\mathcal{A}_2, d)$ be two metric spaces such that $\mathcal{A}_1 \subset \mathcal{A}_2$. Then
$N(\mathcal{A}_1, d, \delta) \le N(\mathcal{A}_2, d, \delta)$, where $N(.)$ is defined in Theorem \ref{th:5.29}.
\end{lemma}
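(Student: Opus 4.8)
The plan is simply to restrict an optimal net. First I would fix a $\delta$-net $N$ for $(\mathcal{A}_2, d)$ realising the covering number, i.e.\ with $|N| = N(\mathcal{A}_2, d, \delta)$ (if the infimum defining $N(\mathcal{A}_2,d,\delta)$ is not attained, take instead an arbitrary $\delta$-net for $(\mathcal{A}_2,d)$ and pass to the infimum only at the end). Next I would verify that this \emph{same} set $N$ is already a $\delta$-net for $(\mathcal{A}_1, d)$: given any $t \in \mathcal{A}_1$, the hypothesis $\mathcal{A}_1 \subset \mathcal{A}_2$ yields $t \in \mathcal{A}_2$, so by definition of $N$ there exists $\pi(t) \in N$ with $d(t, \pi(t)) \le \delta$, which is exactly the defining property of a $\delta$-net for $(\mathcal{A}_1, d)$. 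Taking the infimum over all $\delta$-nets for $(\mathcal{A}_1, d)$ then gives $N(\mathcal{A}_1, d, \delta) \le |N| = N(\mathcal{A}_2, d, \delta)$, as claimed.

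There is essentially no genuine obstacle here; the statement is a one-line unwinding of the definition, and the only point worth flagging is the convention on net membership. The definition of a $\delta$-net recalled above asks only that the centres $\pi(t)$ satisfy the covering inequality, without requiring them to lie in the covered set; moreover, in the intended applications (Lemma \ref{lemma:ineqN1} and the proof of Theorem \ref{theorem1}) all the metric spaces involved are subsets of a common ambient Euclidean space, so a $\delta$-net for the larger set $\mathcal{A}_2$ is literally a $\delta$-net for the subset $\mathcal{A}_1$ and no new net has to be constructed. (If one instead insisted that a net be contained in the space it covers, the inequality would simply be read for the corresponding external covering numbers; the cardinality bound above is unchanged.)
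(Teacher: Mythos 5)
Your proof is correct and follows exactly the paper's argument: take any $\delta$-net for $(\mathcal{A}_2,d)$, observe via the inclusion $\mathcal{A}_1 \subset \mathcal{A}_2$ that it is also a $\delta$-net for $(\mathcal{A}_1,d)$, and pass to the infimum. The extra remarks about attainment of the infimum and the net-membership convention are sensible but not needed, since the paper's Definition of a $\delta$-net indeed does not require the centres to lie in the covered set.
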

\begin{proof}
It suffices to show that if $N$ is a $\delta$-net for $(\mathcal{A}_2,d)$ then it is also a $\delta$-net for $(\mathcal{A}_1,d)$. \\

Let $N$ be a $\delta$-net for $(\mathcal{A}_2,d)$. Let $t \in \mathcal{A}_1$. \\
Since $\mathcal{A}_1 \subset \mathcal{A}_2$, $t \in \mathcal{A}_2$, and since $N$ be a $\delta$-net for $(\mathcal{A}_2,d)$, then there exists $\pi(t) \in N$ such that $d(t, \pi(t)) \le \delta$. \\
Thus $N$ is a $\delta$-net for $(\mathcal{A}_1,d)$ and this concludes the proof.
\end{proof}

\begin{lemma} [\cite{fermanian2022functional}] 
\label{lemma1fermanian} \phantom{linebreak} \\
For any $m \in \mathbb{N}$,
$$
\left|\widehat{L}(m)-L(m)\right| \le \sup_{\substack{\beta_m \in B_{s_P(m) \times Q, \alpha} \\ R_m \in \mathcal{M}_Q([-1,1])}}\left|\widehat{\mathcal{R}}_m(\beta_m, R_m)-\mathcal{R}_m(\beta_m, R_m)\right| .
$$
\end{lemma}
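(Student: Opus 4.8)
The plan is to invoke the elementary principle that the gap between the minima of two functions over a common domain is controlled by the uniform distance between those functions. Write $\mathcal{C} := \mathcal{B}_{s_P(m) \times Q, \alpha} \times \mathcal{M}_Q([-1,1])$ for the common feasible set (recall that $\mu_m = 0$ is fixed throughout this section, so both risks are functions of $(\beta_m, R_m)$ alone). Let $(\beta_m^*, R_m^*) \in \mathcal{C}$ attain $L(m) = \min_{\mathcal{C}} \mathcal{R}_m$ and let $(\widehat{\beta}_m, \widehat{R}_m) \in \mathcal{C}$ attain $\widehat{L}(m) = \min_{\mathcal{C}} \widehat{\mathcal{R}}_m$; both minimizers exist because $\mathcal{C}$ is compact and each risk is continuous. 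I abbreviate the right-hand side as $\Delta := \sup_{(\beta_m, R_m) \in \mathcal{C}} \left| \widehat{\mathcal{R}}_m(\beta_m, R_m) - \mathcal{R}_m(\beta_m, R_m) \right|$.

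First I would bound $\widehat{L}(m) - L(m)$ from above. Since $(\widehat{\beta}_m, \widehat{R}_m)$ minimizes $\widehat{\mathcal{R}}_m$ over $\mathcal{C}$ and $(\beta_m^*, R_m^*) \in \mathcal{C}$ is a competitor, optimality gives $\widehat{\mathcal{R}}_m(\widehat{\beta}_m, \widehat{R}_m) \le \widehat{\mathcal{R}}_m(\beta_m^*, R_m^*)$. Hence
$$ \widehat{L}(m) - L(m) = \widehat{\mathcal{R}}_m(\widehat{\beta}_m, \widehat{R}_m) - \mathcal{R}_m(\beta_m^*, R_m^*) \le \widehat{\mathcal{R}}_m(\beta_m^*, R_m^*) - \mathcal{R}_m(\beta_m^*, R_m^*) \le \Delta, $$
where the last step evaluates the supremum defining $\Delta$ at the single feasible point $(\beta_m^*, R_m^*)$.

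Next I would bound $L(m) - \widehat{L}(m)$ symmetrically, swapping the roles of the two risks. Now I use that $(\beta_m^*, R_m^*)$ minimizes $\mathcal{R}_m$ over $\mathcal{C}$ together with $(\widehat{\beta}_m, \widehat{R}_m) \in \mathcal{C}$, so $\mathcal{R}_m(\beta_m^*, R_m^*) \le \mathcal{R}_m(\widehat{\beta}_m, \widehat{R}_m)$, giving
$$ L(m) - \widehat{L}(m) = \mathcal{R}_m(\beta_m^*, R_m^*) - \widehat{\mathcal{R}}_m(\widehat{\beta}_m, \widehat{R}_m) \le \mathcal{R}_m(\widehat{\beta}_m, \widehat{R}_m) - \widehat{\mathcal{R}}_m(\widehat{\beta}_m, \widehat{R}_m) \le \Delta, $$
again evaluating the supremum at a single feasible point, here $(\widehat{\beta}_m, \widehat{R}_m)$. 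Combining the two displays yields $\left| \widehat{L}(m) - L(m) \right| \le \Delta$, which is exactly the claim.

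There is no substantial obstacle; the whole content is the cross-pairing of each minimizer against the \emph{other} risk. The only point deserving a word of care is that both minima are taken over the \emph{same} set $\mathcal{C}$, so that each minimizer is a legitimate competitor in the other optimization problem — this is what licenses the two optimality inequalities. If one preferred not to assert that the minima are attained, the same two inequalities hold verbatim with $\inf$ replacing $\min$ and an arbitrary $\varepsilon$-minimizer in place of each exact minimizer, letting $\varepsilon \to 0$ at the end.
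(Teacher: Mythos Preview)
Your argument is correct and is exactly the standard cross-pairing proof of the elementary inequality $\bigl|\inf_{\mathcal{C}} f - \inf_{\mathcal{C}} g\bigr| \le \sup_{\mathcal{C}} |f-g|$. The paper does not supply its own proof of this lemma (it is simply attributed to \cite{fermanian2022functional}), so there is nothing to compare against; your write-up would serve perfectly well as the omitted justification.
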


\begin{lemma}[\cite{fermanian2022functional}] \label{lemmafermanian} \phantom{linebreak} \\
For any $m > m^*$, $ \mathbb{P}(\widehat{m} = m) \le \mathbb{P}\left( 2 \underset{\substack{ \beta_m \in \mathcal{B}_{s_P(m) \times Q,\alpha} \\ R_m \in \mathcal{M}_{Q}([-1,1]) }}{\sup} \ |\widehat{\mathcal{R}}_{m}(\beta_m, R_m) - \mathcal{R}_m(\beta_m,R_m)| \ge \mathrm{pen}_n(m) - \mathrm{pen}_n(m^*) \right)$.
\end{lemma}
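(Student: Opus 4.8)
The plan is to exploit the defining optimality of $\widehat{m}$ together with the stability of $L$ past $m^*$ and the nestedness of the truncated signatures. Fix $m > m^*$ and work on the event $\{\widehat{m} = m\}$. Since $\widehat{m}$ minimizes $\widehat{L}(\cdot) + \mathrm{pen}_n(\cdot)$ and equals $m$, comparing the attained value at $m$ with the value at $m^*$ gives
$$ \widehat{L}(m) + \mathrm{pen}_n(m) \le \widehat{L}(m^*) + \mathrm{pen}_n(m^*), $$
which rearranges to $\mathrm{pen}_n(m) - \mathrm{pen}_n(m^*) \le \widehat{L}(m^*) - \widehat{L}(m)$. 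It therefore suffices to bound the right-hand side by $2\sup_{\beta_m, R_m}|\widehat{\mathcal{R}}_m - \mathcal{R}_m|$ and then pass to probabilities via the resulting inclusion of events.

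Next I would split $\widehat{L}(m^*) - \widehat{L}(m)$ by inserting the deterministic risks, writing it as $(\widehat{L}(m^*) - L(m^*)) + (L(m^*) - L(m)) + (L(m) - \widehat{L}(m))$, and invoke the Remark, which guarantees that $L$ is constant on $\{m^*, m^*+1, \dots\}$ (equal to $\text{Tr}(\Sigma)$), so that $L(m^*) - L(m) = 0$. This leaves $\widehat{L}(m^*) - \widehat{L}(m) \le |\widehat{L}(m^*) - L(m^*)| + |\widehat{L}(m) - L(m)|$. Lemma \ref{lemma1fermanian} controls the second term directly by $\sup_{\beta_m \in \mathcal{B}_{s_P(m) \times Q, \alpha},\, R_m \in \mathcal{M}_Q([-1,1])} |\widehat{\mathcal{R}}_m(\beta_m, R_m) - \mathcal{R}_m(\beta_m, R_m)|$.

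The remaining and, in my view, crucial step is to bound the $m^*$ fluctuation by the same level-$m$ supremum. Lemma \ref{lemma1fermanian} at order $m^*$ bounds $|\widehat{L}(m^*) - L(m^*)|$ by the level-$m^*$ supremum of $|\widehat{\mathcal{R}}_{m^*} - \mathcal{R}_{m^*}|$, so it is enough to dominate that supremum by the level-$m$ one. Here I would use the nestedness of the truncated signatures: for $m > m^*$, the vector $\widetilde{S}^{m^*}(\mathcal{X})$ is a prefix of $\widetilde{S}^m(\mathcal{X})$, so any $\beta_{m^*} \in \mathcal{B}_{s_P(m^*) \times Q, \alpha}$ can be zero-padded to a matrix $\beta_m \in \mathcal{B}_{s_P(m) \times Q, \alpha}$ (the Frobenius norm is unchanged) for which $\widehat{\mathcal{R}}_m(\beta_m, R_m) = \widehat{\mathcal{R}}_{m^*}(\beta_{m^*}, R_m)$ and $\mathcal{R}_m(\beta_m, R_m) = \mathcal{R}_{m^*}(\beta_{m^*}, R_m)$, since the product $\bm{S^m(\mathcal{X})}\beta_m$ only contracts against the padded (zero) coordinates. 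The parameter $R_m \in \mathcal{M}_Q([-1,1])$ lives in the same space at both orders, so no adjustment is needed there. Taking the supremum over level-$m^*$ parameters on the left and enlarging to the full level-$m$ ball on the right yields $\sup_{m^*}|\widehat{\mathcal{R}}_{m^*} - \mathcal{R}_{m^*}| \le \sup_{m}|\widehat{\mathcal{R}}_m - \mathcal{R}_m|$. Combining the two fluctuation bounds gives $\widehat{L}(m^*) - \widehat{L}(m) \le 2\sup_{\beta_m, R_m}|\widehat{\mathcal{R}}_m - \mathcal{R}_m|$, so on $\{\widehat{m} = m\}$ we have $\mathrm{pen}_n(m) - \mathrm{pen}_n(m^*) \le 2\sup_{\beta_m, R_m}|\widehat{\mathcal{R}}_m - \mathcal{R}_m|$, and taking probabilities gives the claim. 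The main obstacle is precisely this embedding argument: it is where the prefix structure of signatures (rather than a generic nested-model hypothesis) is used, and one must verify carefully that zero-padding keeps $\beta_m$ inside the Frobenius ball and leaves both the empirical and theoretical risks genuinely unchanged.
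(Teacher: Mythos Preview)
The paper does not supply its own proof of this lemma; it is stated with attribution to \cite{fermanian2022functional} and used as a black box in the proof of Proposition~\ref{prop2}. Your argument is correct and is precisely the standard derivation: optimality of $\widehat{m}$ at $m$ versus $m^*$, the constancy $L(m^*)=L(m)$ for $m>m^*$ from the Remark, Lemma~\ref{lemma1fermanian} at both orders, and the embedding of $\mathcal{B}_{s_P(m^*)\times Q,\alpha}\times\mathcal{M}_Q([-1,1])$ into $\mathcal{B}_{s_P(m)\times Q,\alpha}\times\mathcal{M}_Q([-1,1])$ by zero-padding $\beta_{m^*}$ (which preserves the Frobenius norm and leaves both $\widehat{\mathcal{R}}$ and $\mathcal{R}$ unchanged because $S^{m^*}(\mathcal{X})$ is a prefix of $S^m(\mathcal{X})$). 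There is nothing to compare against in the present paper, and your proof stands on its own.
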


In the following, we consider
\begin{align*}
Z_m(\beta_m, R_m) &= \widehat{\mathcal{R}}_m(\beta_m,R_m) - \mathcal{R}_m(\beta_m,R_m)  \\ 
&= \dfrac{1}{n} \dsum_{i=1}^n \left[ \ || Y_i - (W\bm{Y}R_m)_{i,\mydot} - S^m(X_i)\beta_m ||^2 - \mathbb{E}\left( || \mathcal{Y}_i - (W\bm{\mathcal{Y}}R_m)_{i,\mydot} - S^m(\mathcal{X}_i)\beta_m ||^2 \right) \ \right].
\end{align*}

\begin{lemma} \label{lemma:Zmsub} \phantom{linebreak} \\
Under assumptions $(\mathcal{H}_\alpha)$ and $(\mathcal{H}_K)$, $\forall m \in \mathbb{N}, Z_m(\beta_m,R_m)_{\beta_m \in \mathcal{B}_{s_P(m) \times Q, \alpha}, R_m \in \mathcal{M}_Q([-1,1])}$ is sub-Gaussian for the distance 
$$ D'((\beta_m,R_m),(\beta_m',R_m')) = \dfrac{K}{\sqrt{n}} \Big[\sqrt{Q} K_\text{neighb} K_{\mathcal{Y}} ||R_m-R_m'|| + \exp{(K_{\mathcal{X}})} ||\beta_m - \beta_m'|| \ \Big], $$
where $K = 2 \left[K_{\mathcal{Y}} + K_\text{neighb} K_{\mathcal{Y}} Q^{\frac{3}{2}} + \exp{(K_{\mathcal{X}})} \alpha \right] $.
\end{lemma}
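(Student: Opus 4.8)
The plan is to establish the sub-Gaussian increment bound directly from the definition, by controlling the difference $Z_m(\beta_m, R_m) - Z_m(\beta_m', R_m')$ in terms of the parameter distances and then invoking Hoeffding's lemma (Lemma \ref{lemma:Hoeffdings}) on the resulting sum of independent bounded increments. Recall that
$$ Z_m(\beta_m, R_m) - Z_m(\beta_m', R_m') = \frac{1}{n} \sum_{i=1}^n (U_i - \mathbb{E}(\widetilde U_i)), $$
where $U_i = \| Y_i - (W\bm{Y}R_m)_{i,\mydot} - S^m(X_i)\beta_m\|^2 - \|Y_i - (W\bm{Y}R_m')_{i,\mydot} - S^m(X_i)\beta_m'\|^2$ and $\widetilde U_i$ is its population analogue with $\mathcal{Y},\mathcal{X}$ in place of $Y,X$. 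The $U_i$ are independent across $i$ (since the $\varepsilon_i$ are i.i.d.\ and independent of the covariates), so a sub-Gaussian increment bound for $Z_m(\beta_m,R_m)-Z_m(\beta_m',R_m')$ will follow from Hoeffding's lemma once each $U_i$ is shown to lie in an interval of the appropriate length.

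\textbf{Key steps.} First I would write $\|a\|^2 - \|b\|^2 = \langle a-b, a+b\rangle$ with $a = Y_i - (W\bm{Y}R_m)_{i,\mydot} - S^m(X_i)\beta_m$ and $b$ the corresponding vector with primed parameters, so that $a - b = (W\bm{Y}(R_m'-R_m))_{i,\mydot} + S^m(X_i)(\beta_m'-\beta_m)$. Using the a priori bounds collected just before the theorem statement — namely $\|Y_i\|\le K_{\mathcal{Y}}$, $\|W_{i,\mydot}\bm{Y}\|\le \sqrt{Q}K_\text{neighb}K_{\mathcal{Y}}$, $\|R_m\|\le Q$, and $\|S^m(X_i)\|\le \exp(K_{\mathcal{X}})$ together with $\|\beta_m\|\le\alpha$ — I would bound $\|a+b\| \le 2\big(K_{\mathcal{Y}} + \sqrt{Q}K_\text{neighb}K_{\mathcal{Y}}\cdot\sqrt{Q} + \exp(K_{\mathcal{X}})\alpha\big) = K$ (here $\|(W\bm{Y}R_m)_{i,\mydot}\| \le \|W_{i,\mydot}\bm Y\|\,\|R_m\| \le \sqrt Q K_\text{neighb}K_{\mathcal{Y}}\cdot Q$, which gives the $K_\text{neighb}K_{\mathcal{Y}}Q^{3/2}$ term), and $\|a-b\| \le \sqrt Q K_\text{neighb}K_{\mathcal{Y}}\|R_m-R_m'\| + \exp(K_{\mathcal{X}})\|\beta_m-\beta_m'\|$ by the triangle inequality and sub-multiplicativity. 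By Cauchy–Schwarz, $|U_i| \le \|a+b\|\,\|a-b\| \le K\big(\sqrt Q K_\text{neighb}K_{\mathcal{Y}}\|R_m-R_m'\| + \exp(K_{\mathcal{X}})\|\beta_m-\beta_m'\|\big) =: \sqrt n\, D'((\beta_m,R_m),(\beta_m',R_m'))$. Hence each $U_i$ lies in an interval of length $2\sqrt n D'$, and $\frac1n(U_i - \mathbb{E}\widetilde U_i)$ lies in an interval of length $\tfrac{2\sqrt n}{n}D' = \tfrac{2}{\sqrt n}D'$ after the usual centering adjustment; applying Hoeffding's lemma to the independent sum over $i=1,\dots,n$ gives, for any $t\in\mathbb{R}$,
$$ \mathbb{E}\Big[\exp\big(t(Z_m(\beta_m,R_m)-Z_m(\beta_m',R_m'))\big)\Big] \le \exp\Big(\frac{t^2}{8}\cdot n\cdot\frac{4}{n}D'^2\Big) = \exp\Big(\frac{t^2 D'^2}{2}\Big), $$
which is exactly the sub-Gaussianity of the process $(Z_m(\beta_m,R_m))$ for the distance $D'$.

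\textbf{Main obstacle.} The only delicate point is bookkeeping the constants so that the stated $K = 2[K_{\mathcal{Y}} + K_\text{neighb}K_{\mathcal{Y}}Q^{3/2} + \exp(K_{\mathcal{X}})\alpha]$ and the scaling $1/\sqrt n$ in $D'$ come out precisely right — in particular tracking that the $\|(W\bm{Y}R_m)_{i,\mydot}\|$ term contributes $K_\text{neighb}K_{\mathcal{Y}}Q^{3/2}$ (from $\sqrt Q K_\text{neighb}K_{\mathcal{Y}}$ times $\|R_m\|\le Q$), while the corresponding term in $\|a-b\|$ contributes only $\sqrt Q K_\text{neighb}K_{\mathcal{Y}}\|R_m-R_m'\|$ because there $\|W\bm Y\|$ multiplies the \emph{difference} $R_m-R_m'$ rather than $R_m$ itself. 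One also has to be slightly careful that the centering is by $\mathbb{E}(\widetilde U_i)$ (population covariates) rather than $\mathbb{E}(U_i)$, but since the bound $|U_i|\le\sqrt n D'$ holds pathwise for every realization of $(X_i,Y_i)$ and equally for $(\mathcal{X}_i,\mathcal{Y}_i)$, both $U_i$ and $\mathbb{E}(\widetilde U_i)$ lie in the same interval, so $U_i - \mathbb{E}(\widetilde U_i)$ lies in an interval of length $2\sqrt n D'$ and Hoeffding's lemma applies verbatim. No genuinely hard estimate is needed beyond the a priori bounds already recorded in the excerpt.
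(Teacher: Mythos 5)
Your proposal is correct and follows essentially the same route as the paper's proof: bound the per-observation increment $\ell_{\mathcal{X}_i,\mathcal{Y}_i}(\beta_m,R_m)-\ell_{\mathcal{X}_i,\mathcal{Y}_i}(\beta_m',R_m')$ by $K\,D((\beta_m,R_m),(\beta_m',R_m'))$ using the a priori bounds, then apply Hoeffding's lemma termwise and multiply the moment generating functions over the $n$ independent observations to obtain the $1/\sqrt{n}$ scaling in $D'$. The only cosmetic difference is that you use the identity $\|a\|^2-\|b\|^2=\langle a-b,a+b\rangle$ with Cauchy--Schwarz where the paper uses $|a^2-b^2|\le 2\max(|a|,|b|)\,|a-b|$ on the norms; both yield the same constant $K$.
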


\vspace{0.3cm}

\begin{proof}
Since $\mathbb{E}[Z_m(\beta_m, R_m)]=0$, it suffices to show that
$$ \forall \lambda, \mathbb{E}\{\exp{[\lambda (Z_m(\beta_m,R_m) - Z_m(\beta_m', R_m')) ]} \} \le \exp{\left\{ \dfrac{\lambda^2 D'^2((\beta_m,R_m),(\beta_m',R_m'))}{2} \right\}} $$
for the metric $D'$. \\

Let $\ell_{\mathcal{X}_i, \mathcal{Y}_i}(\beta_m,R_m) = || \mathcal{Y}_i - (W\bm{\mathcal{Y}}R_m)_{i,\mydot} - S^m(\mathcal{X}_i)\beta_m ||^2$, then 
$$ Z_m(\beta_m, R_m) = \dfrac{1}{n} \dsum_{i=1}^n \big[ \ell_{X_i, Y_i}(\beta_m,R_m) - \mathbb{E}[\ell_{\mathcal{X}_i, \mathcal{Y}_i}(\beta_m,R_m)] \big]. $$

\textit{\textbf{Step 1: We show that $\ell_{\mathcal{X}, \mathcal{Y}}(\beta_m,R_m)$ is Lipschitz}} \\
We have to show that there exists $K \ge 0$ such that
$$ \left| \ell_{\mathcal{X}, \mathcal{Y}}(\beta_m,R_m) - \ell_{\mathcal{X}, \mathcal{Y}}(\beta_m',R_m') \right| \le K D((\beta_m,R_m),(\beta_m',R_m')) $$
for a metric $D$. \\

$$ |\ell_{\mathcal{X}_i, \mathcal{Y}_i}(\beta_m,R_m) - \ell_{\mathcal{X}_i, \mathcal{Y}_i}(\beta_m',R_m')| = | \ || \mathcal{Y}_i - (W\bm{\mathcal{Y}}R_m)_{i,\mydot} - S^m(\mathcal{X}_i)\beta_m ||^2 - || \mathcal{Y}_i - (W\bm{\mathcal{Y}}R_m')_{i,\mydot} - S^m(\mathcal{X}_i)\beta_m' ||^2 \ | . $$

Since $|a^2 - b^2| = |a+b| \ |a-b| \le 2 \max{(|a|,|b|)} \ |a-b|$, we have:
\begin{align*}
|\ell_{\mathcal{X}_i, \mathcal{Y}_i}(\beta_m,R_m) - \ell_{\mathcal{X}_i, \mathcal{Y}_i}(\beta_m',R_m')| 
&\le 2 \max{(|| \mathcal{Y}_i - (W\bm{\mathcal{Y}}R_m)_{i,\mydot} - S^m(\mathcal{X}_i)\beta_m ||,|| \mathcal{Y}_i - (W\bm{\mathcal{Y}}R_m')_{i,\mydot} - S^m(\mathcal{X}_i)\beta_m' ||)} \\ & \hspace{0.7cm} | \ || \mathcal{Y}_i - (W\bm{\mathcal{Y}}R_m)_{i,\mydot} - S^m(\mathcal{X}_i)\beta_m || - || \mathcal{Y}_i - (W\bm{\mathcal{Y}}R_m')_{i,\mydot} - S^m(\mathcal{X}_i)\beta_m' || \ | .
\end{align*}

\textbullet \ We consider $| \ || \mathcal{Y}_i - (W\bm{\mathcal{Y}}R_m)_{i,\mydot} - S^m(\mathcal{X}_i)\beta_m || - || \mathcal{Y}_i - (W\bm{\mathcal{Y}}R_m')_{i,\mydot} - S^m(\mathcal{X}_i)\beta_m' || \ |$: \\

Since $| \ ||a-b|| - ||a-c|| \ | \le ||b-c||$, we get (with $a = \mathcal{Y}_i$, $b = (W\bm{\mathcal{Y}}R_m)_{i,\mydot} + S^m(\mathcal{X}_i)\beta_m$ and $c = (W\bm{\mathcal{Y}}R_m')_{i,\mydot} + S^m(\mathcal{X}_i)\beta_m'$):
\begin{align*}
| \ || \mathcal{Y}_i - (W\bm{\mathcal{Y}}R_m)_{i,\mydot} - S^m(\mathcal{X}_i)\beta_m || &- || \mathcal{Y}_i - (W\bm{\mathcal{Y}}R_m')_{i,\mydot} - S^m(\mathcal{X}_i)\beta_m' || \ | \\
& \le || (W\bm{\mathcal{Y}}R_m)_{i,\mydot} + S^m(\mathcal{X}_i)\beta_m - (W\bm{\mathcal{Y}}R_m')_{i,\mydot} - S^m(\mathcal{X}_i)\beta_m'|| \\
&= || (W_{i,\mydot}\bm{\mathcal{Y}})(R_m-R_m') + S^m(\mathcal{X}_i)(\beta_m - \beta_m')|| \\
&\le ||W_{i,\mydot}\bm{\mathcal{Y}}|| \ ||R_m-R_m'|| + ||S^m(\mathcal{X}_i)|| \ || \beta_m - \beta_m' || .
\end{align*}

Now, since
$$ ||S^m(\mathcal{X}_i)|| \le \exp{( K_{\mathcal{X}} )} $$ 
and
$$ ||W_{i,\mydot} \bm{\mathcal{Y}}|| \le \sqrt{Q} K_\text{neighb} K_{\mathcal{Y}}, $$
we get
\begin{align*}
| \ || \mathcal{Y}_i - (W\bm{\mathcal{Y}}R_m)_{i,\mydot} - S^m(\mathcal{X}_i)\beta_m || &- || \mathcal{Y}_i - (W\bm{\mathcal{Y}}R_m')_{i,\mydot} - S^m(\mathcal{X}_i)\beta_m' || \ | \\
&\le ||W_{i,\mydot} \bm{\mathcal{Y}}|| \ ||R_m-R_m'|| + \exp{( K_{\mathcal{X}} )} \ || \beta_m - \beta_m' || \\ 
&\le  \sqrt{Q} K_\text{neighb} K_{\mathcal{Y}} \ ||R_m-R_m'|| + \exp{( K_{\mathcal{X}} )} \ || \beta_m - \beta_m' ||
\end{align*}

\vspace{0.3cm}

\textbullet \ We consider $\max{(|| \mathcal{Y}_i - (W\bm{\mathcal{Y}}R_m)_{i,\mydot} - S^m(\mathcal{X}_i)\beta_m ||,|| \mathcal{Y}_i - (W\bm{\mathcal{Y}}R_m')_{i,\mydot} - S^m(\mathcal{X}_i)\beta_m' ||)}$: \\
\begin{align*}
|| \mathcal{Y}_i - (W\bm{\mathcal{Y}}R_m)_{i,\mydot} - S^m(\mathcal{X}_i)\beta_m || &\le || \mathcal{Y}_i - (W\bm{\mathcal{Y}}R_m)_{i,\mydot}|| + || S^m(\mathcal{X}_i)\beta_m || \\
&\le || \mathcal{Y}_i|| + ||W_{i,\mydot} \bm{\mathcal{Y}}|| \ ||R_m|| + || S^m(\mathcal{X}_i)|| \ ||\beta_m || \\
&\le K_{\mathcal{Y}} + \sqrt{Q} K_\text{neighb} K_{\mathcal{Y}} Q + \exp{(K_{\mathcal{X}})} \alpha \\
&= K_{\mathcal{Y}} + K_\text{neighb} K_{\mathcal{Y}} Q^{\frac{3}{2}} + \exp{(K_{\mathcal{X}})} \alpha 
\end{align*}

Similarly, we can show
$$|| \mathcal{Y}_i - (W\bm{\mathcal{Y}}R_m')_{i,\mydot} - S^m(\mathcal{X}_i)\beta_m' || \le  K_{\mathcal{Y}} + K_\text{neighb} K_{\mathcal{Y}} Q^{\frac{3}{2}} + \exp{(K_{\mathcal{X}})} \alpha. $$

Thus,
\begin{align*} 
|\ell_{\mathcal{X}_i, \mathcal{Y}_i}(\beta_m,R_m) &- \ell_{\mathcal{X}_i, \mathcal{Y}_i}(\beta_m',R_m')| \\ &\le 2 \Big[K_{\mathcal{Y}} + K_\text{neighb} K_{\mathcal{Y}} Q^{\frac{3}{2}} + \exp{(K_{\mathcal{X}})} \alpha \Big] \ \Big[\sqrt{Q} K_\text{neighb} K_{\mathcal{Y}} ||R_m-R_m'|| +  \exp{(K_{\mathcal{X}})} ||\beta_m - \beta_m'|| \Big].
\end{align*}

Let $D((\beta_m,R_m),(\beta_m',R_m')) = \sqrt{Q} K_\text{neighb} K_{\mathcal{Y}} ||R_m-R_m'|| + \exp{(K_{\mathcal{X}})} ||\beta_m - \beta_m'||$ and $K = 2 [K_{\mathcal{Y}} + K_\text{neighb} K_{\mathcal{Y}} Q^{\frac{3}{2}} + \exp{(K_{\mathcal{X}})} \alpha] \ge 0$. $\ell_{\mathcal{X},\mathcal{Y}}$ is $K$-Lipschitz for the metric $D$. \\

\textit{\textbf{Step 2: Application of Hoeffding's lemma}} \\
We apply Lemma \ref{lemma:Hoeffdings} on $\mathcal{X}' = \ell_{\mathcal{X}, \mathcal{Y}}(\beta_m,R_m) - \ell_{\mathcal{X}, \mathcal{Y}}(\beta_m',R_m')$.

From Step 1, $|\mathcal{X}'| \le K D((\beta_m,R_m),(\beta_m',R_m'))$. \\
Thus, $\mathbb{P}(-K D((\beta_m,R_m),(\beta_m',R_m')) \le \mathcal{X}' \le K D((\beta_m,R_m),(\beta_m',R_m')))=1$. \\

We deduce 
$$ \forall \lambda \in \mathbb{R}, \mathbb{E}[\exp{(\lambda(\mathcal{X}'-\mathbb{E}(\mathcal{X}')))}] \le \exp{\left[ \dfrac{\lambda^2 (2KD((\beta_m,R_m),(\beta_m',R_m')))^2}{8} \right]} = \exp{\left[ \dfrac{\lambda^2 K^2 D^2((\beta_m,R_m),(\beta_m',R_m'))}{2} \right]} . $$

Now we denote $X'_i = \ell_{X_i, Y_i}(\beta_m,R_m) - \ell_{X_i, Y_i}(\beta_m',R_m')$. Noting that $\mathbb{E}[X'_i]=\mathbb{E}[\mathcal{X}'_i]$, we move to Step 3. \\

\textit{\textbf{Step 3: End of the proof}}
\begin{align*}
\mathbb{E}\{\exp{[\lambda (Z_m(\beta_m,R_m) - Z_m(\beta_m', R_m'))]} \} & \\
&\hspace{-3.5cm}= \mathbb{E}\left\{\exp{\left[\lambda \dfrac{1}{n} \dsum_{i=1}^n \big[\ell_{X_i,Y_i}(\beta_m,R_m) - \ell_{X_i,Y_i}(\beta_m',R_m')\big] - \mathbb{E}\big[ \ell_{\mathcal{X}_i,\mathcal{Y}_i}(\beta_m,R_m) - \ell_{\mathcal{X}_i,\mathcal{Y}_i}(\beta_m',R_m') \big]\right]} \right\} \\
&\hspace{-3.5cm}= \mathbb{E}\left\{\exp{\left[\lambda \dfrac{1}{n} \dsum_{i=1}^n (X'_i-\mathbb{E}(X'_i))\right]} \right\} \\
&\hspace{-3.5cm}= \dprod_{i=1}^n \mathbb{E}\left\{\exp{\left[\dfrac{\lambda}{n} (X'_i-\mathbb{E}(X'_i))\right]} \right\} \\
&\hspace{-3.5cm}\le \dprod_{i=1}^n \exp{\left[ \dfrac{\lambda^2K^2 D^2((\beta_m,R_m),(\beta_m',R_m'))}{2n^2} \right]} \\
&\hspace{-3.5cm}= \exp{\left[ \dfrac{\lambda^2 K^2 D^2((\beta_m,R_m),(\beta_m',R_m'))}{2n} \right]} .
\end{align*}

Let $D'((\beta_m,R_m),(\beta_m',R_m')) = \dfrac{K}{\sqrt{n}} D((\beta_m,R_m),(\beta_m',R_m'))$, we get
$$ \mathbb{E}\{\exp{[\lambda (Z_m(\beta_m,R_m) - Z_m(\beta_m', R_m'))]} \} 
\le \exp{\left[ \dfrac{\lambda^2 D'^2((\beta_m,R_m),(\beta_m',R_m'))}{2} \right]} $$
which completes the proof.

Then, $Z_m(\beta_m, R_m)$ is sub-Gaussian for the distance $D'$.

\end{proof}

\begin{proposition} \label{prop1} \phantom{linebreak} \\
Under assumptions $(\mathcal{H}_\alpha)$ and $(\mathcal{H}_K)$, 
$\forall m \in \mathbb{N}, x \ge 0, \beta_m' \in \mathcal{B}_{s_P(m) \times Q,\alpha}, R_m' \in \mathcal{M}_Q([-1,1])$
\begin{align*}
&\mathbb{P}\left(\underset{\substack{\beta_m \in \mathcal{B}_{s_P(m) \times Q,\alpha} \\ R_m \in \mathcal{M}_Q([-1,1])}}{\sup} \ Z_m(\beta_m, R_m) \ge 108 K \alpha \dfrac{1}{\sqrt{n}} \exp{(K_{\mathcal{X}})} \sqrt{s_P(m) \pi} + 108 K K_\text{neighb} K_{\mathcal{Y}} \dfrac{Q^{\frac{5}{2}}}{\sqrt{n}} \sqrt{\pi} + Z_m(\beta_m',R_m') + x\right) \\ 
&\hspace{3cm}\le 36 \exp{\left\{-\dfrac{x^2 n}{144K^2 [K_\text{neighb} K_{\mathcal{Y}} Q^{\frac{3}{2}} +  \exp{(K_{\mathcal{X}})} \alpha]^2}\right\}}.
\end{align*}
\end{proposition}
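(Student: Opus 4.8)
The statement is a uniform (in $(\beta_m,R_m)$) deviation bound for the centred process $Z_m$, and since Lemma~\ref{lemma:Zmsub} tells us this process is sub-Gaussian for the metric $D'$, the natural and essentially only tool is the Dudley/chaining bound for sub-Gaussian processes, i.e.\ Theorem~\ref{th:5.29}. The plan is therefore to apply Theorem~\ref{th:5.29} with index set $\mathcal{T}=\mathcal{B}_{s_P(m)\times Q,\alpha}\times\mathcal{M}_Q([-1,1])$, metric $d=D'$, and reference point $t'=(\beta_m',R_m')$. Separability is not an obstacle: $(\beta_m,R_m)\mapsto Z_m(\beta_m,R_m)$ is almost surely continuous (it is built from norms of affine functions of the entries of $\beta_m$ and $R_m$), so one may restrict to a countable dense subset. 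Theorem~\ref{th:5.29} then produces a bound of exactly the announced shape, and it remains to estimate the two ingredients appearing in it: the entropy integral $\int_0^\infty\sqrt{\log N(\mathcal{T},D',\delta)}\,\mathrm{d}\delta$ and the $D'$-diameter of $\mathcal{T}$.

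The diameter is the routine part. The Frobenius diameter of $\mathcal{B}_{s_P(m)\times Q,\alpha}$ is $2\alpha$, and that of $\mathcal{M}_Q([-1,1])$ is at most $2Q$ (a difference of two such matrices has $Q^2$ entries, each in $[-2,2]$). Substituting these into the definition of $D'$ gives $\mathrm{diam}_{D'}(\mathcal{T})\le\tfrac{2K}{\sqrt{n}}\bigl[K_\text{neighb}K_{\mathcal{Y}}Q^{3/2}+\exp(K_{\mathcal{X}})\alpha\bigr]$, so that $C\,\mathrm{diam}_{D'}(\mathcal{T})^2$, with $C$ the universal constant of Theorem~\ref{th:5.29} (here $C=36$), equals $144K^2n^{-1}[K_\text{neighb}K_{\mathcal{Y}}Q^{3/2}+\exp(K_{\mathcal{X}})\alpha]^2$, which is precisely the denominator in the exponent.

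The entropy integral is where the work is. I would first invoke Lemma~\ref{lemma:ineqN1} to factor $N(\mathcal{T},D',\delta)\le N(\mathcal{B}_{s_P(m)\times Q,\alpha},d_1,\delta/2)\,N(\mathcal{M}_Q([-1,1]),d_2,\delta/2)$, where $d_1$ and $d_2$ are the $\beta$- and $R$-blocks of $D'$; using $\sqrt{a+b}\le\sqrt{a}+\sqrt{b}$ this splits the integral into a $\beta$-contribution and an $R$-contribution. In each piece a linear rescaling turns the metric into a multiple of the Frobenius metric, after which I apply the standard volumetric covering bound $N(B_{\mathbb{R}^D}(r),\|\cdot\|,\varepsilon)\le(3r/\varepsilon)^D$ for $\varepsilon\le r$ (and $N=1$ for $\varepsilon>r$): for the $R$-block, first use Lemma~\ref{lemma:ineqN2} to enclose $\mathcal{M}_Q([-1,1])$ in the Frobenius ball of radius $Q$ (so $D=Q^2$); for the $\beta$-block, take radius $\alpha$ and $D=s_P(m)Q$. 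Each resulting integral is of the shape $\int_0^{a}\sqrt{D\log(a/\delta)}\,\mathrm{d}\delta=a\sqrt{D}\int_0^1\sqrt{\log(1/u)}\,\mathrm{d}u=a\sqrt{D}\cdot\tfrac{\sqrt{\pi}}{2}$: the factor $\tfrac{\sqrt{\pi}}{2}$ is where the $\sqrt{\pi}$ in the statement comes from, $\sqrt{D}$ supplies the $\sqrt{s_P(m)}$ (up to a $\sqrt{Q}$) and the $Q^{2}$-type dimension factors, and the rescaling constant $a$ (which carries a $3$ from the covering bound and a $2$ from the $\delta/2$ of Lemma~\ref{lemma:ineqN1}), combined with the $\tfrac12$ from $\int_0^1\sqrt{\log(1/u)}\,\mathrm{d}u=\tfrac{\sqrt\pi}{2}$ and the overall $C=36$, multiplies out to the coefficient $108$. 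One should also note that $\sqrt{\log N(\cdot,\cdot,\delta)}$ vanishes once $\delta$ exceeds the relevant diameter, so stopping each integral there and extending it to $+\infty$ is harmless.

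I expect the only genuine difficulty to be the bookkeeping of absolute constants: the hidden factor $2$ in the $\delta/2$ of Lemma~\ref{lemma:ineqN1}, the factor $3$ in the covering-number estimate, the $\tfrac{\sqrt\pi}{2}$ from the truncated Gaussian integral, and the universal constant $C=36$ of Theorem~\ref{th:5.29} must combine so that the final bias terms carry the coefficient $108$ and the exponential prefactor is $36$. No idea beyond Lemma~\ref{lemma:Zmsub}, Lemmas~\ref{lemma:ineqN1}--\ref{lemma:ineqN2} and Theorem~\ref{th:5.29} is needed; the rest is a direct computation.
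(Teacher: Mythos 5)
Your proposal follows essentially the same route as the paper's proof: apply Theorem \ref{th:5.29} to the sub-Gaussian process of Lemma \ref{lemma:Zmsub}, compute the $D'$-diameter (your value matches, giving the $144K^2[\cdot]^2/n$ denominator), factor the covering number via Lemmas \ref{lemma:ineqN1}--\ref{lemma:ineqN2}, and evaluate the entropy integral with the volumetric bound and $\int_0^1\sqrt{\log(1/u)}\,\mathrm{d}u=\tfrac{\sqrt{\pi}}{2}$, so the constants $108$ and $36$ assemble exactly as in the paper. The one point of divergence is your dimension count for the $\beta$-ball: you take $D=s_P(m)Q$ (which is the honest Frobenius dimension and yields an extra $\sqrt{Q}$ in the first bias term), whereas the paper uses exponent $s_P(m)$ in its application of Lemma 5.13 of \cite{van2014probability}, which is what produces the $\sqrt{s_P(m)\pi}$ appearing in the statement.
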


\vspace{0.3cm}

\begin{proof}
Let $m \in \mathbb{N}, x \ge 0, \beta_m' \in \mathcal{B}_{s_P(m) \times Q,\alpha}$ and $R_m' \in \mathcal{M}_Q([-1,1])$. \\

We apply Theorem \ref{th:5.29} on $Z_m$ which is sub-Gaussian for $D'$ from Lemma \ref{lemma:Zmsub}: \\

For the distance $D'$, $\text{diam}(\mathcal{B}_{s_P(m) \times Q,\alpha} \times \mathcal{M}_Q([-1,1])) = 2\dfrac{K}{\sqrt{n}} \Big( K_\text{neighb} K_{\mathcal{Y}} Q^{\frac{3}{2}} + \exp{(K_{\mathcal{X}})} \alpha \Big)$, then
\begin{align*}
&\mathbb{P}\left(\underset{\substack{\beta_m \in \mathcal{B}_{s_P(m) \times Q,\alpha} \\ R_m \in \mathcal{M}_Q([-1,1])} }{\sup} \ Z_m(\beta_m,R_m) - Z_m(\beta_m',R_m') \ge 36 \dint_0^\infty \sqrt{\log{N(\mathcal{B}_{s_P(m) \times Q,\alpha} \times \mathcal{M}_Q([-1,1]),D',\delta)}} \text{d}\delta + x\right) \\
&\hspace{3cm} \le 36 \exp{\left\{-\dfrac{x^2 n}{144K^2 [ K_\text{neighb} K_{\mathcal{Y}} Q^{\frac{3}{2}} + \exp{(K_{\mathcal{X}})} \alpha ]^2}\right\}} .   
\end{align*}

Now, from Lemma \ref{lemma:ineqN1}, 
$$ N\left(\mathcal{B}_{s_P(m) \times Q,\alpha} \times \mathcal{M}_Q([-1,1]),D',\delta\right) \le N\left(\mathcal{B}_{s_P(m) \times Q,\alpha} ,D'_1,\dfrac{\delta}{2}\right) N\left( \mathcal{M}_Q([-1,1]),D'_2,\dfrac{\delta}{2}\right), $$ 
and from Lemma \ref{lemma:ineqN2}, since $\mathcal{M}_Q([-1,1]) \subset \mathcal{B}_{Q \times Q,Q}$, 
$$ N\left( \mathcal{M}_Q([-1,1]),D'_2,\dfrac{\delta}{2}\right) \le N\left( \mathcal{B}_{Q \times Q,Q},D'_2,\dfrac{\delta}{2}\right).$$
Thus,
$$ N\left(\mathcal{B}_{s_P(m) \times Q,\alpha} \times \mathcal{M}_Q([-1,1]),D',\delta\right) \le N\left(\mathcal{B}_{s_P(m) \times Q,\alpha} ,D'_1,\dfrac{\delta}{2}\right) N\left( \mathcal{B}_{Q \times Q,Q},D'_2,\dfrac{\delta}{2}\right) $$ 
where 
$$D'_1(\beta_m,\beta_m')=\dfrac{K}{\sqrt{n}} \exp{(K_{\mathcal{X}})} \ || \beta_m - \beta_m'||$$
and $$ D'_2(R_m,R_m') = \dfrac{K}{\sqrt{n}} \sqrt{Q} K_\text{neighb} K_{\mathcal{Y}} ||R_m-R_m'||.$$

Next, from \cite[Lemma 5.13]{van2014probability} we have 
\begin{alignat*}{3}
N\left(\mathcal{B}_{s_P(m) \times Q,\alpha} ,D'_1,\dfrac{\delta}{2}\right) &\le \left( \dfrac{6 K \alpha \exp{(K_{\mathcal{X}})}}{\sqrt{n} \delta} \right)^{s_P(m)} && \text{if } 0 < \dfrac{\sqrt{n} \delta \exp{(-K_{\mathcal{X}})}}{2K\alpha} < 1 \\
N\left(\mathcal{B}_{s_P(m) \times Q,\alpha} ,D'_1,\dfrac{\delta}{2} \right) &= 1 && \text{if } \sqrt{n}\delta \ge 2 K \alpha \exp{(K_{\mathcal{X}})}
\end{alignat*} 
and
\begin{alignat*}{3}
N\left( \mathcal{B}_{Q \times Q,Q},D'_2,\dfrac{\delta}{2}\right) &\le \left( \dfrac{6 Q^{\frac{3}{2}} K K_\text{neighb} K_{\mathcal{Y}}}{\sqrt{n} \delta} \right)^{Q^2} && \text{if } 0 < \dfrac{\sqrt{n} \delta}{2 Q^{\frac{3}{2}} K K_\text{neighb} K_{\mathcal{Y}}} < 1 \\
N\left( \mathcal{B}_{Q \times Q,Q},D'_2,\dfrac{\delta}{2}\right) &= 1 && \text{if } \sqrt{n} \delta \ge 2 Q^{\frac{3}{2}} K K_\text{neighb} K_{\mathcal{Y}}.
\end{alignat*}

\vspace{0.3cm}

\textit{\textbf{Situation 1:}} $Q^{\frac{3}{2}} K_\text{neighb} K_{\mathcal{Y}} \le \alpha  \exp{(K_{\mathcal{X}})}$
\begin{align*}
&\dint_0^\infty \sqrt{\log{N(\mathcal{B}_{s_P(m) \times Q,\alpha} \times \mathcal{M}_Q([-1,1]),D',\delta)}} \text{d}\delta \\
&\le \dint_0^{2 Q^{\frac{3}{2}} K K_\text{neighb} K_{\mathcal{Y}} / \sqrt{n}} \sqrt{s_P(m) \log{\left[ \dfrac{6K\alpha \exp{(K_{\mathcal{X}})}}{\sqrt{n}\delta} \right]} + Q^2 \log{\left[ \dfrac{6Q^{\frac{3}{2}} K K_\text{neighb} K_{\mathcal{Y}}}{\sqrt{n} \delta} \right]}
} \text{d}\delta \\
&+ \dint_{2 Q^{\frac{3}{2}} K K_\text{neighb} K_{\mathcal{Y}} / \sqrt{n}}^{2K\alpha \exp{(K_{\mathcal{X}})}/\sqrt{n}} \sqrt{
s_P(m) \log{\left( \dfrac{6K\alpha \exp{(K_{\mathcal{X}})}}{\sqrt{n}\delta} \right)}} \text{d}\delta \\
&\le \dint_{0}^{2K\alpha \exp{(K_{\mathcal{X}})}/\sqrt{n}} \sqrt{
s_P(m) \log{\left( \dfrac{6K\alpha  \exp{(K_{\mathcal{X}})}}{\sqrt{n}\delta} \right)}} \text{d}\delta + \dint_0^{2 Q^{\frac{3}{2}} K K_\text{neighb} K_{\mathcal{Y}} / \sqrt{n}} \sqrt{Q^2 \log{\left( \dfrac{6 Q^{\frac{3}{2}} K K_\text{neighb} K_{\mathcal{Y}}}{\sqrt{n} \delta} \right)}
} \text{d}\delta \\
&\le 3K\dfrac{\alpha}{\sqrt{n}} \exp{(K_{\mathcal{X}})} \sqrt{s_P(m) \pi} + 3 K K_\text{neighb} K_{\mathcal{Y}} \dfrac{Q^{\frac{5}{2}}}{\sqrt{n}} \sqrt{\pi} 
\end{align*}

\vspace{0.3cm}

\textit{\textbf{Situation 2:}} $Q^{\frac{3}{2}} K_\text{neighb} K_{\mathcal{Y}} \ge \alpha  \exp{(K_{\mathcal{X}})}$ \\
We have the same inequality. \\

Thus 
$$ 36 \dint_0^\infty \sqrt{\log{N(\mathcal{B}_{s_P(m) \times Q,\alpha} \times \mathcal{M}_Q([-1,1]),D',\delta)}} \text{d}\delta \le 108 K \alpha \dfrac{1}{\sqrt{n}} \exp{(K_{\mathcal{X}})} \sqrt{s_P(m) \pi} + 108 K K_\text{neighb} K_{\mathcal{Y}} \dfrac{Q^{\frac{5}{2}}}{\sqrt{n}} \sqrt{\pi}.$$

Finally,
\begin{align*}
&\mathbb{P}\left(\underset{\substack{\beta_m \in \mathcal{B}_{s_P(m) \times Q,\alpha} \\ R_m \in \mathcal{M}_Q([-1,1])}}{\sup} \ Z_m(\beta_m,R_m) \ge 108 K \alpha \dfrac{1}{\sqrt{n}} \exp{(K_{\mathcal{X}})} \sqrt{s_P(m) \pi} + 108  K K_\text{neighb} K_{\mathcal{Y}} \dfrac{Q^{\frac{5}{2}}}{\sqrt{n}} \sqrt{\pi}
+ Z_m(\beta_m', R_m') + x\right) \\
&\hspace{3cm} \le 36 \exp{\left\{-\dfrac{x^2 n}{144K^2 [ K_\text{neighb} K_{\mathcal{Y}} Q^{\frac{3}{2}} +  \exp{(K_{\mathcal{X}})} \alpha ]^2}\right\}}.    
\end{align*}
\end{proof}

\begin{proposition} \label{prop2} \phantom{linebreak} \\
Let $0<\kappa < \dfrac{1}{2}$ and $\mathrm{pen}_n(m) = K_{\mathrm{pen}} n^{-\kappa} \sqrt{s_P(m)}$, $n_1$ the smallest integer such that $$n_1 \ge \left\{ \dfrac{\sqrt{s_P(m^*+1)}-\sqrt{s_P(m^*)}}{\sqrt{s_P(m^*+1)}} \dfrac{K_{\mathrm{pen}}}{864 K\sqrt{\pi} \left[ \alpha \exp{(K_{\mathcal{X}})} + K_\text{neighb} K_{\mathcal{Y}} Q^{\frac{5}{2}} s_P(m^*+1)^{-\frac{1}{2}} \right]} \right\}^{\frac{1}{\kappa-\frac{1}{2}}}.$$

Then, under $(\mathcal{H}_\alpha)$ and $(\mathcal{H}_K)$, $\forall m > m^*, n \ge n_1$, 
$$ \mathbb{P}(\widehat{m}=m) \le 74 \exp{\left[ - K_3 s_P(m) n^{-2\kappa+1} \right]},$$ where
$$ K_3 = \left(1-\sqrt{\dfrac{s_P(m^*)}{s_P(m^*+1)}}\right)^2 \dfrac{K^2_{\mathrm{pen}}}{8} \min{\left\{ \dfrac{1}{K_{\mathcal{Y}}^4} , \dfrac{1}{1152K^2 \left[ K_\text{neighb} K_{\mathcal{Y}} Q^{\frac{3}{2}} + \exp{(K_{\mathcal{X}})}\alpha \right]^2} \right\}}. $$
\end{proposition}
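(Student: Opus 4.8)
The plan is to chain together Lemma~\ref{lemmafermanian}, Proposition~\ref{prop1} and Hoeffding's inequality (Lemma~\ref{lemma:Hoeffdingsineq}). Fix $m>m^*$ and set $t_m := \mathrm{pen}_n(m)-\mathrm{pen}_n(m^*) = K_{\mathrm{pen}}n^{-\kappa}\big(\sqrt{s_P(m)}-\sqrt{s_P(m^*)}\big)$, which is strictly positive since $s_P$ is strictly increasing and $m>m^*$. By Lemma~\ref{lemmafermanian}, $\mathbb{P}(\widehat{m}=m)\le\mathbb{P}\big(\sup_{\beta_m,R_m}|Z_m(\beta_m,R_m)|\ge t_m/2\big)$. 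Because $\sup_{\beta_m,R_m}|Z_m|=\max\{\sup_{\beta_m,R_m}Z_m,\ \sup_{\beta_m,R_m}(-Z_m)\}$, a union bound gives $\mathbb{P}(\sup|Z_m|\ge t_m/2)\le\mathbb{P}(\sup Z_m\ge t_m/2)+\mathbb{P}(\sup(-Z_m)\ge t_m/2)$; moreover sub-Gaussianity for the distance $D'$ (Lemma~\ref{lemma:Zmsub}) is preserved under negation, so Proposition~\ref{prop1} applies verbatim to $-Z_m$. Hence it suffices to bound $\mathbb{P}(\sup Z_m\ge t_m/2)$ and multiply the resulting estimate by $2$.

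To bound $\mathbb{P}(\sup Z_m\ge t_m/2)$ I would apply Proposition~\ref{prop1} with the reference point $(\beta_m',R_m')=(\bm{0},\bm{0})$, for which $Z_m(\bm{0},\bm{0})=\frac1n\sum_{i=1}^n\big(\|Y_i\|^2-\mathbb{E}\|\mathcal{Y}_i\|^2\big)$. Since $0\le\|Y_i\|^2\le K_{\mathcal{Y}}^2$ by $(\mathcal{H}_K)$(i), Lemma~\ref{lemma:Hoeffdingsineq} yields $\mathbb{P}\big(Z_m(\bm{0},\bm{0})>u\big)\le\exp(-2nu^2/K_{\mathcal{Y}}^4)$ for every $u>0$. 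Write $E_m(n):=108K\sqrt{\pi}\,n^{-1/2}\big(\alpha\exp(K_{\mathcal{X}})\sqrt{s_P(m)}+K_\text{neighb}K_{\mathcal{Y}}Q^{5/2}\big)$ for the entropy term of Proposition~\ref{prop1} and $C:=144K^2\big(K_\text{neighb}K_{\mathcal{Y}}Q^{3/2}+\exp(K_{\mathcal{X}})\alpha\big)^2$ for its variance factor. On $\{Z_m(\bm{0},\bm{0})\le u\}$ one has $\{\sup Z_m\ge t_m/2\}\subseteq\{\sup Z_m - Z_m(\bm{0},\bm{0})\ge E_m(n)+x\}$ with $x:=t_m/2-u-E_m(n)$, so, provided $x\ge0$, Proposition~\ref{prop1} together with the Hoeffding bound gives
\[ \mathbb{P}\!\left(\sup_{\beta_m,R_m}Z_m(\beta_m,R_m)\ge \tfrac{t_m}{2}\right)\le 36\exp\!\left(-\frac{x^2n}{C}\right)+\exp\!\left(-\frac{2nu^2}{K_{\mathcal{Y}}^4}\right). \]

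It then remains to choose $u$ (hence $x$) and to check $x\ge0$, and this is exactly where $n\ge n_1$ is used. For $m>m^*$ one has $s_P(m)\ge s_P(m^*+1)$, so $E_m(n)\le108K\sqrt{\pi}\big(\alpha\exp(K_{\mathcal{X}})+K_\text{neighb}K_{\mathcal{Y}}Q^{5/2}s_P(m^*+1)^{-1/2}\big)\sqrt{s_P(m)}\,n^{-1/2}$; on the other hand, since $r\mapsto(1-\sqrt{s_P(m^*)/r})^2$ is increasing, $t_m\ge c_0K_{\mathrm{pen}}n^{-\kappa}\sqrt{s_P(m)}$ with $c_0:=(\sqrt{s_P(m^*+1)}-\sqrt{s_P(m^*)})/\sqrt{s_P(m^*+1)}$. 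The definition of $n_1$ is calibrated precisely so that $n^{\kappa-1/2}\le c_0K_{\mathrm{pen}}/\big(864K\sqrt{\pi}(\cdots)\big)$, i.e. $E_m(n)\le t_m/8$, whence $t_m/2-E_m(n)\ge 3t_m/8$. Taking $u:=\tfrac14 c_0K_{\mathrm{pen}}n^{-\kappa}\sqrt{s_P(m)}$ leaves $x=t_m/2-u-E_m(n)\ge\tfrac18 c_0K_{\mathrm{pen}}n^{-\kappa}\sqrt{s_P(m)}\ge0$. Substituting these values, using $(\sqrt{s_P(m)}-\sqrt{s_P(m^*)})^2\ge c_0^2 s_P(m)$ and recalling that the second argument of the minimum in $K_3$ equals $(8C)^{-1}$, one checks that both $x^2n/C$ and $2nu^2/K_{\mathcal{Y}}^4$ are at least $K_3\,s_P(m)\,n^{-2\kappa+1}$, so $\mathbb{P}(\sup Z_m\ge t_m/2)\le 37\exp\{-K_3 s_P(m)n^{-2\kappa+1}\}$; doubling yields the constant $74$ of the statement.

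The main obstacle is the bookkeeping in this last step. One has to keep the entropy term $E_m(n)$ uniformly negligible against the penalty gap $t_m$ over \emph{all} $m>m^*$ — this is what forces the precise exponent $1/(\kappa-\tfrac12)$ and the $s_P(m^*+1)^{-1/2}$ normalization in $n_1$, the latter being needed to absorb the $Q^{5/2}$ summand of $E_m(n)$ that is not proportional to $\sqrt{s_P(m)}$ — and then one has to split the remaining budget $t_m/2-E_m(n)$ between the chaining deviation $x$ and the Hoeffding deviation $u$ so that both exponents clear the common target $K_3 s_P(m)n^{-2\kappa+1}$; this is what pins down the two arguments of the minimum and the constant $\tfrac18$ inside $K_3$. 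Everything else — the symmetrization $Z_m\leftrightarrow-Z_m$, the union bound, and the invocations of Proposition~\ref{prop1} — is routine.
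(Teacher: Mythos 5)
Your proposal is correct and takes essentially the same route as the paper's proof: Lemma~\ref{lemmafermanian}, symmetrization of $Z_m$, the split into a chaining term controlled by Proposition~\ref{prop1} and a fixed-reference-point term controlled by Hoeffding's inequality, with $n_1$ calibrated exactly so that the entropy term is dominated by the penalty gap, yielding the same constants $36+1=37$ and $K_3=\min(K_1,K_2)$, doubled to $74$. The only cosmetic differences are that you fix $(\beta_m',R_m')=(\bm{0},\bm{0})$ from the outset (the paper carries a generic reference point and optimizes the Hoeffding constant to zero at the end) and allocate the deviation budget between $x$ and $u$ slightly differently, which changes nothing in the final bound.
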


\vspace{0.3cm}

\begin{proof}
Let $u_{m,n} = \dfrac{1}{2}\left[ \ \mathrm{pen}_n(m)-\mathrm{pen}_n(m^*) \ \right] = \dfrac{1}{2} K_{\mathrm{pen}} n^{-\kappa} \left[ \ \sqrt{s_P(m)}-\sqrt{s_P(m^*)} \ \right] $. \\

From Lemma \ref{lemmafermanian}, we have
\begin{align*}
\forall m > m^*, \mathbb{P}(\widehat{m}=m) &\le \mathbb{P}\left(2 \underset{\substack{\beta_m \in \mathcal{B}_{s_P(m) \times Q,\alpha} \\ R_m \in \mathcal{M}_Q([-1,1]) }}{\sup} \ | \widehat{\mathcal{R}}_m(\beta_m,R_m) - \mathcal{R}_m(\beta_m,R_m) | \ge 2 u_{m,n} \right) \\ &= 
\mathbb{P}\left(\underset{\substack{\beta_m \in \mathcal{B}_{s_P(m) \times Q,\alpha} \\ R_m \in \mathcal{M}_Q([-1,1]) }}{\sup} \ | Z_m(\beta_m,R_m) | \ge u_{m,n} \right).
\end{align*}

We also have
\begin{align*}
\mathbb{P}\left(\underset{\substack{\beta_m \in \mathcal{B}_{s_P(m) \times Q,\alpha} \\ R_m \in \mathcal{M}_Q([-1,1]) }}{\sup} \ | Z_m(\beta_m,R_m) | \ge u_{m,n} \right) &\le \mathbb{P}\left(\underset{\substack{\beta_m \in \mathcal{B}_{s_P(m) \times Q,\alpha} \\ R_m \in \mathcal{M}_Q([-1,1]) }}{\sup} \ Z_m(\beta_m,R_m) \ge u_{m,n} \right) \\ &+\mathbb{P}\left(\underset{\substack{\beta_m \in \mathcal{B}_{s_P(m) \times Q,\alpha} \\ R_m \in \mathcal{M}_Q([-1,1]) }}{\sup} \ -Z_m(\beta_m,R_m) \ge u_{m,n} \right),
\end{align*}

where 
\begin{align*}
\mathbb{P}\left(\underset{\substack{\beta_m \in \mathcal{B}_{s_P(m) \times Q,\alpha} \\ R_m \in \mathcal{M}_Q([-1,1]) }}{\sup} \ Z_m(\beta_m,R_m) \ge u_{m,n} \right) &= \mathbb{P}\left(\underset{\substack{\beta_m \in \mathcal{B}_{s_P(m),\alpha} \\ R_m \in \mathcal{M}_Q([-1,1]) }}{\sup} \ Z_m(\beta_m,R_m) \ge u_{m,n}, Z_m(\beta_m',R_m') \le \dfrac{u_{m,n}}{2} \right) \\ &+ \mathbb{P}\left(\underset{\substack{\beta_m \in \mathcal{B}_{s_P(m) \times Q,\alpha} \\ R_m \in \mathcal{M}_Q([-1,1]) }}{\sup} \ Z_m(\beta_m,R_m) \ge u_{m,n}, Z_m(\beta_m',R_m') > \dfrac{u_{m,n}}{2} \right) \\
&\le \underbrace{\mathbb{P}\left(\underset{\substack{\beta_m \in \mathcal{B}_{s_P(m) \times Q,\alpha} \\ R_m \in \mathcal{M}_Q([-1,1]) }}{\sup} \ Z_m(\beta_m,R_m) \ge Z_m(\beta_m',R_m') + \dfrac{u_{m,n}}{2} \right)}_{a} \\ &+ \underbrace{\mathbb{P}\left(Z_m(\beta_m',R_m') > \dfrac{u_{m,n}}{2} \right)}_{b} .
\end{align*}

\textbullet \ To apply Proposition \ref{prop1} to $a$ with $x=\dfrac{u_{m,n}}{2}-108 K \alpha \dfrac{1}{\sqrt{n}} \exp{(K_{\mathcal{X}})} \sqrt{s_P(m) \pi } - 108 K K_\text{neighb} K_{\mathcal{Y}} \dfrac{Q^{\frac{5}{2}}}{\sqrt{n}} \sqrt{\pi}$, we need $x \ge 0$. \\
\begin{align*}
x &= \dfrac{1}{4} K_{\mathrm{pen}} n^{-\kappa} \left( \sqrt{s_P(m)}-\sqrt{s_P(m^*)} \right) -108 K \alpha \dfrac{1}{\sqrt{n}} \exp{(K_{\mathcal{X}})} \sqrt{s_P(m) \pi } - 108 K K_\text{neighb} K_{\mathcal{Y}} \dfrac{Q^{\frac{5}{2}}}{\sqrt{n}} \sqrt{\pi} \\
&= \dfrac{1}{4} K_{\mathrm{pen}} n^{-\kappa} \sqrt{s_P(m)} \left[ 1-\sqrt{\dfrac{s_P(m^*)}{s_P(m)}}
 -432 \dfrac{K}{K_{\mathrm{pen}}} n^{\kappa-\frac{1}{2}} \alpha \sqrt{\pi} \exp{(K_{\mathcal{X}})} - 432 \dfrac{K K_\text{neighb} K_{\mathcal{Y}}}{K_{\mathrm{pen}}} n^{\kappa-\frac{1}{2}} Q^{\frac{5}{2}} \sqrt{\dfrac{\pi}{s_P(m)}}
\right] \\
&\ge \dfrac{K_{\mathrm{pen}}}{4} n^{-\kappa} \sqrt{s_P(m)} \left[ 1-\sqrt{\dfrac{s_P(m^*)}{s_P(m^*+1)}}
 -432 \dfrac{K \alpha}{K_{\mathrm{pen}}} n^{\kappa-\frac{1}{2}}  \sqrt{\pi} \exp{(K_{\mathcal{X}})} \right.
 \\ &\hspace{6.8cm} \left. - 432 \dfrac{K K_\text{neighb} K_{\mathcal{Y}}}{K_{\mathrm{pen}}} n^{\kappa-\frac{1}{2}} Q^{\frac{5}{2}} \sqrt{\dfrac{\pi}{s_P(m^*+1)}}
\right] \\
\end{align*}

Since $\kappa < \dfrac{1}{2}$,
the right term is increasing with $n$ and we must have
$n \ge n_1$ with $n_1$ such that it is positive:

\begin{align*}
&1-\sqrt{\dfrac{s_P(m^*)}{s_P(m^*+1)}}
 -432 \dfrac{K}{K_{\mathrm{pen}}} n^{\kappa-\frac{1}{2}} \alpha \sqrt{\pi} \exp{(K_{\mathcal{X}})} - 432 \dfrac{K K_\text{neighb} K_{\mathcal{Y}}}{K_{\mathrm{pen}}} n^{\kappa-\frac{1}{2}} Q^{\frac{5}{2}} \sqrt{\dfrac{\pi}{s_P(m^*+1)}} \ge 0 \\
 &\iff n^{\kappa-\frac{1}{2}} \left[
 432 \dfrac{K}{K_{\mathrm{pen}}} \alpha \sqrt{\pi} \exp{(K_{\mathcal{X}})} + 432 \dfrac{K K_\text{neighb} K_{\mathcal{Y}}}{K_{\mathrm{pen}}}  Q^{\frac{5}{2}} \sqrt{\dfrac{\pi}{s_P(m^*+1)}} \right] \le 1-\sqrt{\dfrac{s_P(m^*)}{s_P(m^*+1)}} \\
  &\iff n \ge \left\{ \dfrac{\sqrt{s_P(m^*+1)}-\sqrt{s_P(m^*)}}{\sqrt{s_P(m^*+1)}} \dfrac{K_{\mathrm{pen}}}{432K\sqrt{\pi} \left[ \alpha \exp{(K_{\mathcal{X}})} +  K_\text{neighb} K_{\mathcal{Y}} Q^{\frac{5}{2}} s_P(m^*+1)^{-\frac{1}{2}} \right]} \right\}^{\frac{1}{\kappa-\frac{1}{2}}} \\
\end{align*}

Now consider $n_1 = \left\lceil \left\{ \dfrac{\sqrt{s_P(m^*+1)}-\sqrt{s_P(m^*)}}{\sqrt{s_P(m^*+1)}} \dfrac{K_{\mathrm{pen}}}{864 K\sqrt{\pi} \left[ \alpha \exp{(K_{\mathcal{X}})} + K_\text{neighb} K_{\mathcal{Y}} Q^{\frac{5}{2}} s_P(m^*+1)^{-\frac{1}{2}} \right]} \right\}^{\frac{1}{\kappa-\frac{1}{2}}} \right\rceil$, \\
then for $n \ge n_1$,
\begin{align*}
x &= \dfrac{u_{m,n}}{2}-108 K \alpha \dfrac{1}{\sqrt{n}} \exp{(K_{\mathcal{X}})} \sqrt{s_P(m) \pi } - 108 K K_\text{neighb} K_{\mathcal{Y}} \dfrac{Q^{\frac{5}{2}}}{\sqrt{n}} \sqrt{\pi} \\
&\ge \dfrac{1}{4} K_{\mathrm{pen}} n^{-\kappa} \sqrt{s_P(m)} \dfrac{1}{2} \left( 1 - \sqrt{\dfrac{s_P(m^*)}{s_P(m^*+1)}}
 \right) = \dfrac{1}{8} K_{\mathrm{pen}} n^{-\kappa} \sqrt{s_P(m)} \left( 1 - \sqrt{\dfrac{s_P(m^*)}{s_P(m^*+1)}}
 \right) \\
 &\ge 0 . 
\end{align*}

Thus we can apply Proposition \ref{prop1}:

$$
\mathbb{P}\left(\underset{\substack{\beta_m \in \mathcal{B}_{s_P(m) \times Q,\alpha} \\ R_m \in \mathcal{M}_Q([-1,1])}}{\sup} \ Z_m(\beta_m,R_m) \ge \dfrac{u_{m,n}}{2}
+ Z_m(\beta_m',R_m') \right)
\le 36 \exp{\left\{ -\dfrac{x^2 n}{144K^2 \left[ K_\text{neighb} K_{\mathcal{Y}} Q^{\frac{3}{2}} +  \exp{(K_{\mathcal{X}})} \alpha \right]^2}\right\}},    $$

where
$$
x \ge \dfrac{1}{8} K_{\mathrm{pen}} n^{-\kappa} \sqrt{s_P(m)} \left( 1 - \sqrt{\dfrac{s_P(m^*)}{s_P(m^*+1)}}
 \right) \iff
 x^2 \ge \dfrac{1}{64} K_{\mathrm{pen}}^2 n^{-2\kappa} s_P(m) \left( 1 - \sqrt{\dfrac{s_P(m^*)}{s_P(m^*+1)}}
 \right)^2 . $$

Thus, 
\begin{align*}
\mathbb{P}\left(\underset{\substack{\beta_m \in \mathcal{B}_{s_P(m) \times Q,\alpha} \\ R_m \in \mathcal{M}_Q([-1,1])}}{\sup} \ Z_m(\beta_m,R_m) \ge \dfrac{u_{m,n}}{2}
+ Z_m(\beta_m',R_m') \right)
&\le 36 \exp{\left\{ -\dfrac{K^2_{\mathrm{pen}} n^{-2\kappa+1} s_P(m) \left( 1 - \sqrt{\dfrac{s_P(m^*)}{s_P(m^*+1)}} \right)^2}{9216 K^2 \left[K_\text{neighb} K_{\mathcal{Y}} Q^{\frac{3}{2}} + \exp{(K_{\mathcal{X}})} \alpha \right]^2} \right\}}    \\
&= 36 \exp{\left[ -K_1 s_P(m) n^{-2\kappa+1} \right]},
\end{align*}
where $K_1 = \dfrac{K_{\mathrm{pen}}^2 }{9216K^2 \left[ K_\text{neighb} K_{\mathcal{Y}} Q^{\frac{3}{2}} + \exp{(K_{\mathcal{X}})} \alpha \right]^2} \left( 1 - \sqrt{\dfrac{s_P(m^*)}{s_P(m^*+1)}} \right)^2$. \\

\vspace{0.3cm}

\textbullet \ Now we consider $b$:
\begin{align*}
\mathbb{P}\left(Z_m(\beta_m',R_m') > \dfrac{u_{m,n}}{2} \right) & \\
&\hspace{-2.5cm}= \mathbb{P}\left\{ \dfrac{1}{n} \dsum_{i=1}^n \left[ ||Y_i-(W\bm{Y}R_m')_{i,\mydot}-S^m(X_i)\beta_m' ||^2 - \mathbb{E}\left(||\mathcal{Y}_i - (W\bm{\mathcal{Y}}R_m')_{i,\mydot} - S^m(\mathcal{X}_i)\beta_m'||^2 \right) \right] > \dfrac{u_{m,n}}{2} \right\} . 
\end{align*}

Let $B_i = ||Y_i - (W\bm{Y}R_m')_{i,\mydot} - S^m(X_i)\beta_m'||^2$. Then $B_i \ge 0$ and
\begin{align*}
B_i &\le \Big(||Y_i||+||W_{i,\mydot} \bm{Y} || \ ||R_m'|| + ||S^m(X_i)|| \ ||\beta_m'|| \Big)^2 \\
&\le \left[K_{\mathcal{Y}} + \sqrt{Q} K_\text{neighb} K_{\mathcal{Y}} ||R_m'|| + \exp{(K_{\mathcal{X}})} ||\beta_m'|| \right]^2 = \left[K_{\mathcal{Y}}(1 + \sqrt{Q} K_\text{neighb} ||R_m'||) + \exp{(K_{\mathcal{X}})} ||\beta_m'|| \right]^2.
\end{align*}

Now we apply Hoeffding's inequality (Lemma \ref{lemma:Hoeffdingsineq}):
\begin{align*}
\mathbb{P}\left(Z_m(\beta_m',R_m') > \dfrac{u_{m,n}}{2} \right) &=
\mathbb{P}\left\{ \dfrac{1}{n}\dsum_{i=1}^n [B_i-\mathbb{E}(B_i)] > \dfrac{u_{m,n}}{2} \right\} \\
&=
\mathbb{P}\left\{\dsum_{i=1}^n [B_i-\mathbb{E}(B_i)] > \dfrac{n u_{m,n}}{2}\right\} \\
&\le \exp{\left\{ - \dfrac{2 n^2 u_{m,n}^2}{4 n \left[ K_{\mathcal{Y}}(1 + \sqrt{Q} K_\text{neighb}  ||R_m'||) + \exp{(K_{\mathcal{X}})} ||\beta_m'|| \right]^4} \right\}} \\
&= \exp{\left\{ - \dfrac{ n u_{m,n}^2}{2  \left[ K_{\mathcal{Y}}(1 + \sqrt{Q} K_\text{neighb}  ||R_m'||) + \exp{(K_{\mathcal{X}})} ||\beta_m'|| \right]^4} \right\}} \\
&= \exp{\left\{ - \dfrac{ n K_{\mathrm{pen}}^2 n^{-2\kappa} \left(\sqrt{s_P(m)} - \sqrt{s_P(m^*)}\right)^2}{8  \left[ K_{\mathcal{Y}}(1 + \sqrt{Q} K_\text{neighb}  ||R_m'||) + \exp{(K_{\mathcal{X}})} ||\beta_m'|| \right]^4} \right\}} \\
&= \exp{\left\{ - \dfrac{K_{\mathrm{pen}}^2 n^{1-2\kappa} s_P(m) \left(1 - \sqrt{\dfrac{s_P(m^*)}{s_P(m)}}\right)^2}{8  \left[ K_{\mathcal{Y}}(1 + \sqrt{Q} K_\text{neighb}  ||R_m'||) + \exp{(K_{\mathcal{X}})} ||\beta_m'|| \right]^4} \right\}} \\
&\le \exp{\left\{ - \dfrac{K_{\mathrm{pen}}^2 n^{1-2\kappa} s_P(m) \left(1 - \sqrt{\dfrac{s_P(m^*)}{s_P(m^*+1)}}\right)^2}{8  \left[ K_{\mathcal{Y}}(1 + \sqrt{Q} K_\text{neighb}  ||R_m'||) + \exp{(K_{\mathcal{X}})} ||\beta_m'|| \right]^4} \right\}} \\
&= \exp{\left[-K_{2,n} n^{1-2\kappa} s_P(m) \right]},
\end{align*}
with $K_{2,n} = \dfrac{K_{\mathrm{pen}}^2}{8 \left[ K_{\mathcal{Y}} (1 + \sqrt{Q} K_\text{neighb}  ||R_m'|| ) + \exp{(K_{\mathcal{X}})} ||\beta_m'|| \right]^4 } \left(1-\sqrt{\dfrac{s_P(m^*)}{s_P(m^*+1)}}\right)^2$. \\

Then,
$$ \mathbb{P}\left(\underset{\substack{\beta_m \in \mathcal{B}_{s_P(m) \times Q,\alpha} \\ R_m \in \mathcal{M}_Q([-1,1]) } }{\sup} \ Z_m(\beta_m,R_m) \ge u_{m,n} \right) \le 36 \exp{\left[ -K_1 s_P(m) n^{-2\kappa+1} \right]} + \exp{\left[-K_{2,n} n^{1-2\kappa} s_P(m)\right]}.$$
With $K_{3,n} = \min(K_1,K_{2,n})$, we get
$$ \mathbb{P}\left(\underset{\substack{\beta_m \in \mathcal{B}_{s_P(m) \times Q,\alpha} \\ R_m \in \mathcal{M}_Q([-1,1]) }}{\sup} \ Z_m(\beta_m,R_m) \ge u_{m,n} \right) \le 37 \exp{\left[ -K_{3,n} s_P(m) n^{-2\kappa+1} \right]}.$$

Similarly,
$$ \mathbb{P}\left(\underset{\substack{\beta_m \in \mathcal{B}_{s_P(m) \times Q,\alpha} \\ R_m \in \mathcal{M}_Q([-1,1])}}{\sup} \ -Z_m(\beta_m,R_m) \ge u_{m,n} \right) \le 37 \exp{\left[ -K_{3,n} s_P(m) n^{-2\kappa+1} \right]}.$$

Thus, 
$$ \mathbb{P}\left(\underset{\substack{\beta_m \in \mathcal{B}_{s_P(m) \times Q,\alpha} \\ R_m \in \mathcal{M}_Q([-1,1])}}{\sup} \ |Z_m(\beta_m,R_m)| \ge u_{m,n} \right) \le 74 \exp{\left[ -K_{3,n} s_P(m) n^{-2\kappa+1} \right]}.$$

And
\begin{align*}
\mathbb{P}(\widehat{m}=m^* ) &\le 74 \exp{\left[ -K_{3,n} s_P(m) n^{-2\kappa+1} \right]}.
\end{align*}

To optimize the upper bound, we maximize $K_{3_n}$ and so $K_{2,n}$ according to $\beta_m'$ and $R_m'$:
$$ K_{2,n} = \dfrac{K_{\mathrm{pen}}^2}{8 \left[ K_{\mathcal{Y}}(1 + \sqrt{Q} K_\text{neighb} ||R_m'||) + \exp{(K_{\mathcal{X}})} ||\beta_m'|| \right]^4 } \left( 1-\sqrt{\dfrac{s_P(m^*)}{s_P(m^*+1)}} \right)^2 $$ is maximum when $||\beta_m'||=||R_m'||=0$. Then we have $$ K_{2,n} = K_2 = \dfrac{K_{\mathrm{pen}}^2}{8 K_{\mathcal{Y}}^4 } \left( 1-\sqrt{\dfrac{s_P(m^*)}{s_P(m^*+1)}} \right)^2. $$

Finally, since $$K_1 = \dfrac{K_{\mathrm{pen}}^2}{9216 K^2 \left[ K_\text{neighb} K_{\mathcal{Y}} Q^{\frac{3}{2}} + \exp{(K_{\mathcal{X}})} \alpha \right]^2 } \left( 1-\sqrt{\dfrac{s_P(m^*)}{s_P(m^*+1)}} \right)^2, $$ we have 
$$ K_{3,n} = K_3 = \left( 1-\sqrt{\dfrac{s_P(m^*)}{s_P(m^*+1)}} \right)^2 \dfrac{K^2_{\mathrm{pen}}}{8} \min{\left( \dfrac{1}{K_{\mathcal{Y}}^4} , \dfrac{1}{1152K^2 \left[ K_\text{neighb} K_{\mathcal{Y}} Q^{\frac{3}{2}} + \exp{(K_{\mathcal{X}})}\alpha \right]^2} \right)}. $$

\end{proof}

\begin{proposition} \label{prop3} \phantom{linebreak} \\
For any $\delta > 0$, $m\in \mathbb N$, let $n_2$ be the smaller integer such that
$$
n_2 \ge \dfrac{\left[432K\left(\alpha \exp{(K_\mathcal{X})} \sqrt{s_P(m)\pi} + K_\text{neighb} K_\mathcal{Y} Q^{\frac{5}{2}}\sqrt{\pi}\right) \right]^2}{\delta^2}.
$$
Then for $n \ge n_2$,
$$
\mathbb{P}\left( |\widehat{L}(m) - L(m)| \ge \delta \right) \le 74 \exp{\left( -n\delta^2 K_4 \right)},
$$
where 
$$K_4 = \min{\left( \dfrac{1}{2304K^2 [K_\text{neighb} K_{\mathcal{Y}} Q^{\frac{3}{2}} +  \exp{(K_{\mathcal{X}})} \alpha]^2}, \dfrac{1}{2K_\mathcal{Y}^4} \right)}.
$$

\end{proposition}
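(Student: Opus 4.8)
The plan is to follow the same skeleton as the proof of Proposition~\ref{prop2}, with the deterministic threshold $u_{m,n}$ replaced by $\delta$ everywhere. First I would apply Lemma~\ref{lemma1fermanian} to reduce the statement to a supremum of the empirical process $Z_m$:
$$\mathbb{P}\bigl(|\widehat{L}(m)-L(m)| \ge \delta\bigr) \le \mathbb{P}\Bigl(\sup_{\substack{\beta_m \in \mathcal{B}_{s_P(m)\times Q,\alpha}\\ R_m \in \mathcal{M}_Q([-1,1])}} |Z_m(\beta_m,R_m)| \ge \delta\Bigr) \le \mathbb{P}\bigl(\sup Z_m \ge \delta\bigr) + \mathbb{P}\bigl(\sup(-Z_m) \ge \delta\bigr),$$
and by symmetry it suffices to bound $\mathbb{P}(\sup Z_m \ge \delta)$.

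For this, fix the reference pair $(\beta_m',R_m') = (0,0) \in \mathcal{B}_{s_P(m)\times Q,\alpha}\times\mathcal{M}_Q([-1,1])$ and split according to whether $Z_m(0,0) \le \delta/2$ or $Z_m(0,0) > \delta/2$, which gives
$$\mathbb{P}(\sup Z_m \ge \delta) \le \mathbb{P}\Bigl(\sup Z_m \ge Z_m(0,0) + \tfrac{\delta}{2}\Bigr) + \mathbb{P}\Bigl(Z_m(0,0) > \tfrac{\delta}{2}\Bigr).$$
The first term is handled by Proposition~\ref{prop1} with $x = \tfrac{\delta}{2} - 108K\alpha n^{-1/2}\exp(K_{\mathcal{X}})\sqrt{s_P(m)\pi} - 108KK_\text{neighb}K_{\mathcal{Y}}Q^{5/2}n^{-1/2}\sqrt{\pi}$; insisting on $x \ge \delta/4$ (which is precisely what the condition $n \ge n_2$ encodes, since $432 = 4\cdot 108$) then yields $x^2 \ge \delta^2/16$ and hence the upper bound $36\exp\bigl(-n\delta^2/(2304K^2[K_\text{neighb}K_{\mathcal{Y}}Q^{3/2}+\exp(K_{\mathcal{X}})\alpha]^2)\bigr)$, using $2304 = 16\cdot 144$. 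The second term is handled by Hoeffding's inequality (Lemma~\ref{lemma:Hoeffdingsineq}) applied to $B_i = \|Y_i\|^2 \in [0,K_{\mathcal{Y}}^2]$ — the choice $\beta_m' = R_m' = 0$ is exactly what makes $B_i$ this simple — with $t = n\delta/2$, giving $\exp(-n\delta^2/(2K_{\mathcal{Y}}^4))$.

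Setting $K_4 = \min\bigl(1/(2304K^2[K_\text{neighb}K_{\mathcal{Y}}Q^{3/2}+\exp(K_{\mathcal{X}})\alpha]^2),\, 1/(2K_{\mathcal{Y}}^4)\bigr)$, the two terms combine into $37\exp(-n\delta^2 K_4)$, and adding the symmetric estimate for $\mathbb{P}(\sup(-Z_m)\ge\delta)$ produces the factor $74$. No genuine obstacle arises: the proposition is essentially a repackaging of the estimates in Propositions~\ref{prop1} and~\ref{prop2} with $\delta$ in place of $u_{m,n}/2$, and the only point requiring care is the constant-chasing, namely verifying that the margin requirement $x \ge \delta/4$ is what forces the stated form of $n_2$ and that the numerical factors $432$ and $2304$ come out correctly once the decomposition above is in place.
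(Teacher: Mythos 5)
Your proposal is correct and follows essentially the same route as the paper: reduction via Lemma~\ref{lemma1fermanian}, the split at a reference pair with threshold $\delta/2$, Proposition~\ref{prop1} with the margin $x \ge \delta/4$ (whence $432 = 4\cdot 108$ and $2304 = 16\cdot 144$), and Hoeffding for the pointwise term. The only cosmetic difference is that you fix $(\beta_m',R_m')=(0,0)$ from the start, whereas the paper keeps the reference pair general and optimizes the bound at the end — the resulting constants are identical.
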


\vspace{0.3cm}

\begin{proof}

We deduce from Lemma \ref{lemma1fermanian}  that
\begin{align*}
\mathbb{P} \left( |\widehat{L}(m) - L(m)| \ge \delta \right) &\le \mathbb{P} \left( \underset{\substack{\beta_m \in \mathcal{B}_{s_P(m) \times Q,\alpha} \\ R_m \in \mathcal{M}_Q([-1,1])}}{\sup} | \widehat{\mathcal{R}}_m(\beta_m, R_m) - \mathcal{R}_m(\beta_m, R_m) | \ge \delta \right) \\
&\le \mathbb{P} \left( \underset{\substack{\beta_m \in \mathcal{B}_{s_P(m) \times Q,\alpha} \\ R_m \in \mathcal{M}_Q([-1,1])}}{\sup} |Z_m(\beta_m, R_m)| \ge \delta \right) \\
&\le \mathbb{P} \left( \underset{\substack{\beta_m \in \mathcal{B}_{s_P(m) \times Q,\alpha} \\ R_m \in \mathcal{M}_Q([-1,1])}}{\sup} Z_m(\beta_m, R_m) \ge \delta \right) + \mathbb{P} \left( \underset{\substack{\beta_m \in \mathcal{B}_{s_P(m) \times Q,\alpha} \\ R_m \in \mathcal{M}_Q([-1,1])}}{\sup} -Z_m(\beta_m, R_m) \ge \delta \right) .
\end{align*}

Let's fix $\beta'_{m} \in \mathcal{B}_{s_P(m) \times Q,\alpha}, R'_m \in \mathcal{M}_Q([-1,1])$, then

\begin{align*}
\mathbb{P}\left( \underset{\substack{\beta_m \in \mathcal{B}_{s_P(m) \times Q, \alpha} \\ R_m \in \mathcal{M}_Q([-1,1])}}{\sup} Z_m(\beta_m, R_m) \ge \delta\right) &= \mathbb{P}\left( \underset{\substack{\beta_m \in \mathcal{B}_{s_P(m) \times Q, \alpha} \\ R_m \in \mathcal{M}_Q([-1,1])}}{\sup} Z_m(\beta_m, R_m) \ge \delta, Z_m(\beta_m', R_m') \le \dfrac{\delta}{2} \right) \\
&+ \mathbb{P}\left( \underset{\substack{\beta_m \in \mathcal{B}_{s_P(m) \times Q, \alpha} \\ R_m \in \mathcal{M}_Q([-1,1])}}{\sup} Z_m(\beta_m, R_m) \ge \delta, Z_m(\beta_m', R_m') > \dfrac{\delta}{2} \right) \\
&\le 
\underbrace{\mathbb{P}\left( \underset{\substack{\beta_m \in \mathcal{B}_{s_P(m) \times Q, \alpha} \\ R_m \in \mathcal{M}_Q([-1,1])}}{\sup} Z_m(\beta_m, R_m) \ge Z_m(\beta_m', R_m') + \dfrac{\delta}{2} \right)}_a \\
&+ \underbrace{\mathbb{P}\left(Z_m(\beta'_{m}, R'_m) > \dfrac{\delta}{2}\right)}_b
\end{align*}

Denote $x = \dfrac{\delta}{2} - 108 K \left( \alpha \dfrac{\exp{(K_\mathcal{X})}}{\sqrt n} \sqrt{s_P(m)\pi} + \dfrac{K_\text{neighb} K_\mathcal{Y} Q^{\frac{5}{2}}\sqrt{\pi}}{\sqrt{n}} \right)$, then for $n\ge n_2$,
$$
x \ge \dfrac{\delta}{4} > 0
$$
and we get by applying Proposition \ref{prop1} on $a$:

$$ \mathbb{P}\left( \underset{\substack{\beta_m \in \mathcal{B}_{s_P(m) \times Q, \alpha} \\ R_m \in \mathcal{M}_Q([-1,1])}}{\sup} Z_m(\beta_m, R_m) \ge Z_m(\beta_m', R_m') + \dfrac{\delta}{2} \right) \le 36 \exp{\left\{ -\dfrac{x^2 n}{144K^2 [K_\text{neighb} K_{\mathcal{Y}} Q^{\frac{3}{2}} +  \exp{(K_{\mathcal{X}})} \alpha]^2}\right\}} $$

Now, from Hoeffding's inequality (Lemma \ref{lemma:Hoeffdingsineq}) and by using the same $B_i$ random variables as in proof of Proposition \ref{prop2}, we have
$$ \mathbb{P}\left(Z_m(\beta'_{m}, R'_m) > \dfrac{\delta}{2}\right) \le \exp{\left\{ - \dfrac{2 n^2 \delta^2}{4 n \left[ K_{\mathcal{Y}}(1 + \sqrt{Q} K_\text{neighb}  ||R_m'||) + \exp{(K_{\mathcal{X}})} ||\beta_m'|| \right]^4}\right\}} $$

Thus
\begin{align*}
\mathbb{P}\left( \underset{\substack{\beta_m \in \mathcal{B}_{s_P(m) \times Q, \alpha} \\ R_m \in \mathcal{M}_Q([-1,1])}}{\sup} Z_m(\beta_m, R_m) \ge \delta\right) &\le 36 \exp{\left\{ -\dfrac{x^2 n}{144K^2 [K_\text{neighb} K_{\mathcal{Y}} Q^{\frac{3}{2}} +  \exp{(K_{\mathcal{X}})} \alpha]^2}\right\}} \\
&+ \exp{\left\{ - \dfrac{2 n^2 \delta^2}{4 n \left[ K_{\mathcal{Y}}(1 + \sqrt{Q} K_\text{neighb}  ||R_m'||) + \exp{(K_{\mathcal{X}})} ||\beta_m'|| \right]^4}\right\}} \\
&\le 36 \exp{\left\{-\dfrac{\delta^2 n}{2304 K^2 [K_\text{neighb} K_{\mathcal{Y}} Q^{\frac{3}{2}} +  \exp{(K_{\mathcal{X}})} \alpha]^2}\right\}} \\
&+ \exp{\left\{ - \dfrac{n \delta^2}{2 \left[ K_{\mathcal{Y}}(1 + \sqrt{Q} K_\text{neighb}  ||R_m'||) + \exp{(K_{\mathcal{X}})} ||\beta_m'|| \right]^4}\right\}}.
\end{align*}

We minimize the upper bound according to $\beta'_{m}$ and $R'_m$ which results in $||\beta'_{m}|| = ||R'_m|| = 0$ and gives us
$$
\mathbb{P}\left( \underset{\substack{\beta_m \in \mathcal{B}_{s_P(m) \times Q, \alpha} \\ R_m \in \mathcal{M}_Q([-1,1])}}{\sup} Z_m(\beta_m, R_m) \ge \delta\right) \le 37 \exp{\left\{ -n\delta^2 K_4 \right\}},
$$
where $K_4 = \min{\left( \dfrac{1}{2304K^2 [K_\text{neighb} K_{\mathcal{Y}} Q^{\frac{3}{2}} +  \exp{(K_{\mathcal{X}})} \alpha]^2}, \dfrac{1}{2K_\mathcal{Y}^4} \right)} $. \\

We prove the same way that 
$$
\mathbb{P}\left( \underset{\substack{\beta_m \in \mathcal{B}_{s_P(m) \times Q, \alpha} \\ R_m \in \mathcal{M}_Q([-1,1])}}{\sup} -Z_m(\beta_m, R_m) \ge \delta \right) \le 37 \exp{\left( -n\delta^2 K_4\right)}.
$$

This finally gives us 
$$
\mathbb{P}\left(|\widehat{L}(m) - L(m)| \ge \delta \right) \le 74 \exp{\left( -n\delta^2 K_4\right)}.
$$

\end{proof}

\begin{proposition} \label{prop4} \phantom{linebreak} \\

Let $0< \kappa < \dfrac{1}{2}$ and $\mathrm{pen}_n(m) = K_{\mathrm{pen}} n^{-\kappa} \sqrt{s_P(m)}$. Let $n_3$ be the smallest integer satisfying
$$
n_3 \geq \left(\dfrac{1728K\left(\alpha \exp{(K_\mathcal{X})} \sqrt{s_P(m)\pi} + K_\text{neighb} K_\mathcal{Y} Q^{\frac{5}{2}}\sqrt{\pi}\right) +2K_{\mathrm{pen}} \sqrt{s_P(m^*)}}{L(m^*-1)-\sigma^2}\right)^{\frac{1}{\kappa}}, 
$$
with $\sigma^2 = \text{Tr}(\Sigma)$.
Then for any $m<m^*$, $n\geq n_3$,

$$
\mathbb{P}(\widehat{m} = m) \le 148\exp{\left\{ -n\dfrac{K_4}{4} [L(m) - L(m^*) - \mathrm{pen}_n(m^*) + \mathrm{pen}_n(m)]^2 \right\}}.
$$

\end{proposition}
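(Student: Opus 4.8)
The plan is to mimic the argument of Proposition \ref{prop2}, replacing the use of ``$L$ is constant beyond $m^*$'' by ``$L$ is strictly decreasing on $\{1,\dots,m^*\}$''. By definition of $\widehat{m}$ as $\min\big(\arg\min_{m'}\{\widehat{L}(m')+\mathrm{pen}_n(m')\}\big)$, the event $\{\widehat{m}=m\}$ implies $\widehat{L}(m)+\mathrm{pen}_n(m)\le\widehat{L}(m^*)+\mathrm{pen}_n(m^*)$, hence
$$\mathbb{P}(\widehat{m}=m)\le\mathbb{P}\big(\widehat{L}(m)-\widehat{L}(m^*)\le\mathrm{pen}_n(m^*)-\mathrm{pen}_n(m)\big).$$
Inserting $L(m)$ and $L(m^*)$ through the decomposition $\widehat{L}(m)-\widehat{L}(m^*)=[\widehat{L}(m)-L(m)]+[L(m)-L(m^*)]+[L(m^*)-\widehat{L}(m^*)]$ and setting
$$\delta_m:=\tfrac12\big[L(m)-L(m^*)+\mathrm{pen}_n(m)-\mathrm{pen}_n(m^*)\big],$$
the event above forces $[L(m)-\widehat{L}(m)]+[\widehat{L}(m^*)-L(m^*)]\ge 2\delta_m$, and therefore $|\widehat{L}(m)-L(m)|\ge\delta_m$ or $|\widehat{L}(m^*)-L(m^*)|\ge\delta_m$. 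A union bound then reduces the proof to two applications of Proposition \ref{prop3}, one at index $m$ and one at index $m^*$, both with $\delta=\delta_m$.

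Before invoking Proposition \ref{prop3} I would check that $\delta_m>0$ and that $n$ is large enough for the proposition to apply at $\delta=\delta_m$. This is where $m<m^*$ is used: by the remark in the Estimation subsection, $L$ decreases on $\{1,\dots,m^*\}$ and equals $\sigma^2=\text{Tr}(\Sigma)$ thereafter, so $L(m)-L(m^*)\ge L(m^*-1)-\sigma^2>0$; together with $\mathrm{pen}_n(m)\ge 0$ and $\mathrm{pen}_n(m^*)=K_{\mathrm{pen}}n^{-\kappa}\sqrt{s_P(m^*)}$ this gives $\delta_m\ge\tfrac12\big[(L(m^*-1)-\sigma^2)-K_{\mathrm{pen}}n^{-\kappa}\sqrt{s_P(m^*)}\big]$. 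The threshold $n_3$ is tailored so that the subtracted penalty is at most $\tfrac12(L(m^*-1)-\sigma^2)$, which yields $\delta_m\ge\tfrac14(L(m^*-1)-\sigma^2)>0$, and so that $n$ also exceeds the thresholds $n_2$ of Proposition \ref{prop3} for $\delta=\delta_m$ at the relevant indices; the comparison $n^{1/2}\ge n^{\kappa}$ (valid since $\kappa<\tfrac12$) is what lets one pass from the $n^{\kappa}$-type lower bound built into $n_3$ to the $n^{1/2}$-type bound required by $n_2$.

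Applying Proposition \ref{prop3} at $m$ and at $m^*$ with $\delta=\delta_m$ gives $\mathbb{P}(|\widehat{L}(m)-L(m)|\ge\delta_m)\le 74\exp(-n\delta_m^2K_4)$ and $\mathbb{P}(|\widehat{L}(m^*)-L(m^*)|\ge\delta_m)\le 74\exp(-n\delta_m^2K_4)$, so the union bound gives $\mathbb{P}(\widehat{m}=m)\le 148\exp(-n\delta_m^2K_4)$. Since $2\delta_m=L(m)-L(m^*)-\mathrm{pen}_n(m^*)+\mathrm{pen}_n(m)$, we have $\delta_m^2=\tfrac14[L(m)-L(m^*)-\mathrm{pen}_n(m^*)+\mathrm{pen}_n(m)]^2$, which turns the bound into the announced $148\exp\{-n\tfrac{K_4}{4}[L(m)-L(m^*)-\mathrm{pen}_n(m^*)+\mathrm{pen}_n(m)]^2\}$. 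I expect the only delicate point to be the bookkeeping around $n_3$: one must verify simultaneously the lower bound on $\delta_m$ and that $n$ clears the covering-number thresholds $n_2$ coming from Proposition \ref{prop3} at both $m$ and $m^*$, which is precisely why $n_3$ bundles the penalty term $2K_{\mathrm{pen}}\sqrt{s_P(m^*)}$ with the $1728\,K(\dots)$ term.
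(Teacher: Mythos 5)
Your proposal is correct and follows essentially the same route as the paper: the same reduction of $\{\widehat m=m\}$ to $\widehat L(m)-\widehat L(m^*)\le \mathrm{pen}_n(m^*)-\mathrm{pen}_n(m)$, the same split into two deviation events bounded by Proposition \ref{prop3} with $\delta=\tfrac12[L(m)-L(m^*)-\mathrm{pen}_n(m^*)+\mathrm{pen}_n(m)]$, and the same use of $L(m)-L(m^*)\ge L(m^*-1)-\sigma^2$ together with the choice of $n_3$ to guarantee $\delta\ge\tfrac14(L(m^*-1)-\sigma^2)>0$ and to clear the $n_2$ threshold. No substantive differences from the paper's argument.
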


\begin{proof}
Since $\widehat{m} = \min (\underset{m \in \mathbb{N}^*}{\arg \min} (\widehat{L}(m) + \mathrm{pen}_n(m)))$, for $m<m^*$,
\begin{align*}
\mathbb{P}(\widehat{m} = m) &\le \mathbb{P}(\widehat{L}(m) - \widehat{L}(m^*) \le \mathrm{pen}_n(m^*) - \mathrm{pen}_n(m))\\
&= \mathbb{P}(\widehat{L}(m) - \widehat{L}(m^*) + L(m^*) - L(m) \le \mathrm{pen}_n(m^*) - \mathrm{pen}_n(m) + L(m^*) - L(m))\\
&= \mathbb{P}(\widehat{L}(m^*) - L(m^*) + L(m) - \widehat{L}(m) \ge L(m) - L(m^*) - \mathrm{pen}_n(m^*) + \mathrm{pen}_n(m))\\
&\le \mathbb{P}\left( \widehat{L}(m^*) - L(m^*) \ge \dfrac{1}{2}(L(m) - L(m^*) - \mathrm{pen}_n(m^*) + \mathrm{pen}_n(m)) \right) \\
&+ \mathbb{P}\left(L(m) - \widehat{L}(m) \ge \dfrac{1}{2}(L(m) - L(m^*) - \mathrm{pen}_n(m^*) + \mathrm{pen}_n(m))\right) \\
&\le \mathbb{P}\left( |\widehat{L}(m^*) - L(m^*)| \ge \dfrac{1}{2}(L(m) - L(m^*) - \mathrm{pen}_n(m^*) + \mathrm{pen}_n(m)) \right) \\
&+ \mathbb{P}\left(|L(m) - \widehat{L}(m)| \ge \dfrac{1}{2}(L(m) - L(m^*) - \mathrm{pen}_n(m^*) + \mathrm{pen}_n(m))\right).
\end{align*}

Let's denote $x = \dfrac{1}{2}(L(m) - L(m^*) - \mathrm{pen}_n(m^*) + \mathrm{pen}_n(m))$. The function $L(m)$ is decreasing and bounded by $\sigma^2$, thus for $m < m^*$ one has that  $ L(m) \geq L(m^* - 1)$. Moreover,  $L(m^*) = \sigma^2$ which gives us, $L(m) - L(m^*)  \geq L(m^*-1) - \sigma^2$. Furthermore, $\mathrm{pen}_n(m)$ is stricly increasing, therefore
$$
2x \ge L(m^* - 1) - \sigma^2 - K_{\mathrm{pen}} n^{-\kappa} \sqrt{s_P(m^*)}.
$$
We can deduce from this inequality that if $n \ge \left( \dfrac{2K_{\mathrm{pen}} \sqrt{s_P(m^*)}}{L(m^*-1)-\sigma^2} \right)^{\frac{1}{\kappa}}$, then
$$
x \ge \dfrac{1}{4}(L(m^*-1)-\sigma^2) > 0.
$$
Since $x > 0$, we are able to apply Proposition \ref{prop3} with $\delta = x$ if $n$ satisfies 

\begin{align*}
n &\ge \dfrac{\left[432K\left(\alpha \exp{(K_\mathcal{X})} \sqrt{s_P(m)\pi} + K_\text{neighb} K_\mathcal{Y} Q^{\frac{5}{2}}\sqrt{\pi}\right) \right]^2}{x^2}\\
&\ge \dfrac{\left[1728K\left(\alpha \exp{(K_\mathcal{X})} \sqrt{s_P(m)\pi} + K_\text{neighb} K_\mathcal{Y} Q^{\frac{5}{2}}\sqrt{\pi}\right) \right]^2}{[L(m^*-1)-\sigma^2]^2}.
\end{align*}
If this bound is below $1$, then this condition is trivially satisfied and we denote $n_3 = \left\lceil \left( \dfrac{2K_{\mathrm{pen}} \sqrt{s_P(m^*)}}{L(m^*-1)-\sigma^2} \right)^{\frac{1}{\kappa}} \right\rceil$. Otherwise, for $0 < \kappa < \dfrac{1}{2}$, by combining the two bounds on $n$, we denote $$
n_3 = \left\lceil \max\left( \dfrac{1728K\left(\alpha \exp{(K_\mathcal{X})} \sqrt{s_P(m)\pi} + K_\text{neighb} K_\mathcal{Y} Q^{\frac{5}{2}}\sqrt{\pi}\right) }{L(m^*-1)-\sigma^2} ,   \dfrac{2K_{\mathrm{pen}} \sqrt{s_P(m^*)}}{L(m^*-1)-\sigma^2}\right)^{\frac{1}{\kappa}} \right\rceil,
$$ and for $n \ge n_3$ we apply Proposition \ref{prop3}, with $\delta = x$:

\begin{align*}
\mathbb{P}(\widehat{m} = m) &\le 148 \exp{(-n x^2 K_4)} \\
&= 148 \exp{\left\{-n \dfrac{K_4}{4} [L(m) - L(m^*) - \mathrm{pen}_n(m^*) + \mathrm{pen}_n(m)]^2 \right\}}
\end{align*}

\end{proof}

\textbf{Recall of Theorem \ref{theorem1}.} \\

Let $0<\kappa < \dfrac{1}{2}$, $\mathrm{pen}_n(m) = K_{\mathrm{pen}} n^{-\kappa} \sqrt{s_P(m)}$ and $n \geq \max (n_1,n_3)$, then

$$
\mathbb{P}(\widehat{m} \neq m^*) \le 148 m^* \exp{\left\{ -n\dfrac{K_4}{16} \left[ L(m^*-1)-\sigma^2 \right]^2\right\}} + 74\sum_{m > m^*}\exp{\left\{ - K_3 s_P(m) n^{-2\kappa+1} \right\}}.
$$

\begin{proof}
$$
\mathbb{P}(\widehat{m} \neq m^*] = \mathbb{P}(\widehat{m} < m^*) + \mathbb{P}(\widehat{m} > m^*) \le \sum_{m < m^*} \mathbb{P}(\widehat{m} = m) + \sum_{m > m^*} \mathbb{P}(\widehat{m} = m).
$$

One can deduce from Proposition \ref{prop2} that

$$
\sum_{m > m^*} \mathbb{P}(\widehat{m} = m) \le 74\sum_{m > m^*}\exp{\left[ - K_3 s_P(m) n^{-2\kappa+1} \right]}.
$$
The other sum can be handle through Proposition \ref{prop4}, indeed as we proved in the proof of Proposition \ref{prop4} that 
$$
L(m) - L(m^*) - \mathrm{pen}_n(m^*) + \mathrm{pen}_n(m) \geq \dfrac{1}{2}(L(m^*-1)-\sigma^2),
$$
as long as $n\geq n_3$. Thus, one has that
:

\begin{align*}
\sum_{m < m^*} \mathbb{P}(\widehat{m} = m) &\le 148\sum_{m < m^*} \exp{\left\{ -n\dfrac{K_4}{4} [L(m) - L(m^*) - \mathrm{pen}_n(m^*) + \mathrm{pen}_n(m)]^2 \right\}}\\
&\le 148 m^*\exp{\left\{ -n\dfrac{K_4}{16}[L(m^*-1)-\sigma^2]^2\right\}}.
\end{align*}

We conclude the proof of this theorem by combining these two bounds:

$$
\mathbb{P}(\widehat{m} \neq m^*) \le 148 m^* \exp{\left\{ -n\dfrac{K_4}{16}[L(m^*-1)-\sigma^2]^2\right\}} + 74\sum_{m > m^*}\exp{\left\{ - K_3 s_P(m) n^{-2\kappa+1} \right\}}.
$$

\end{proof}

\section{Results of the simulation study: Estimation of $R$} \label{appendix:ressim}

Figures \ref{fig:estimRw}, \ref{fig:estimRm} and \ref{fig:estimRh} present the estimation of $R$ when considering weak, moderate and high spatial effects, respectively.

\begin{figure}
\centering
\includegraphics[width=\textwidth]{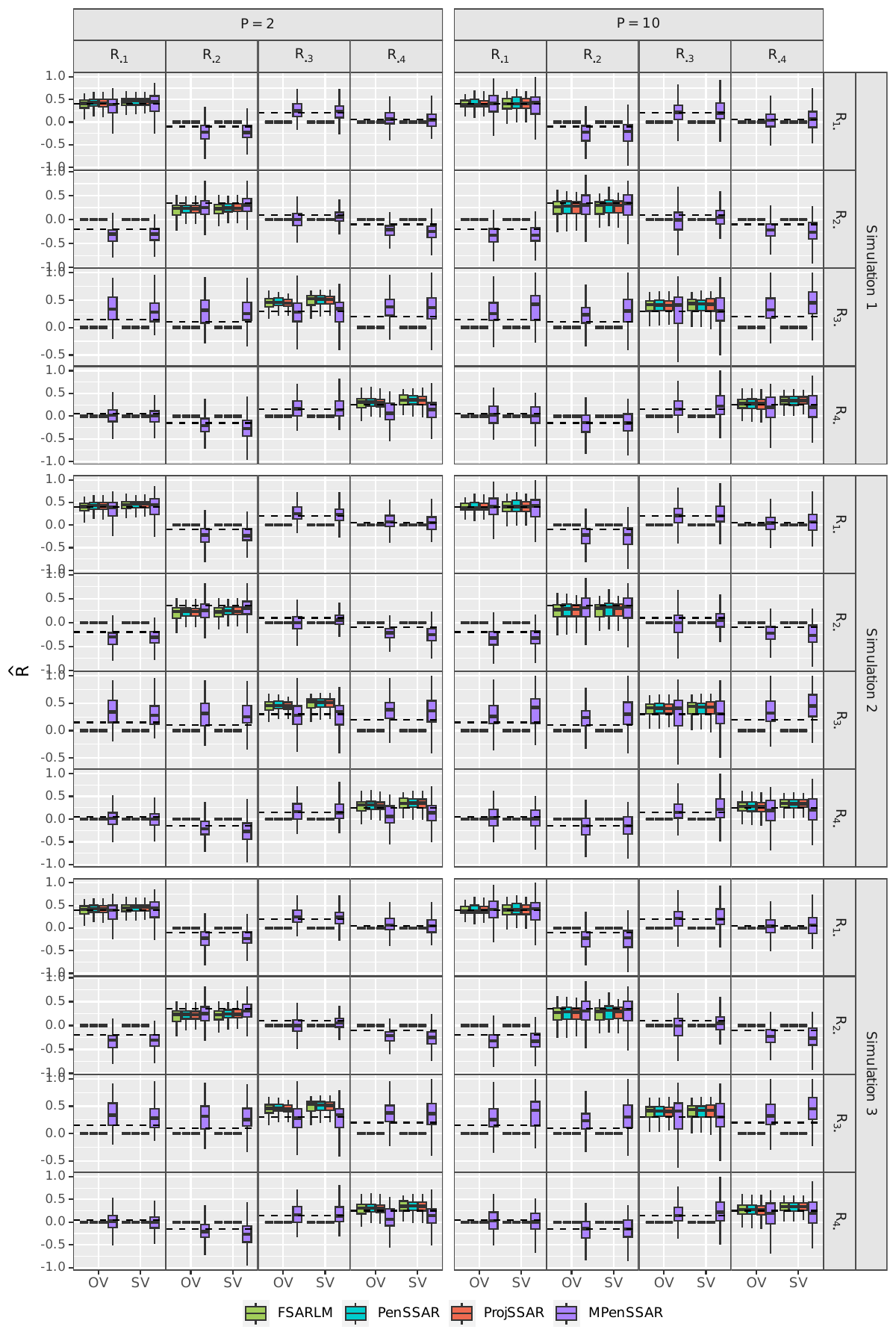}
\caption{Estimation of $R$ with the FSARLM, the PenSSAR, the ProjSSAR and the MPenSSAR using ordinary (OV) and spatial (SV) validation, when $R = R_w$. The dotted line corresponds to the true value of the coefficient.}
\label{fig:estimRw}
\end{figure}

\begin{figure}
\centering
\includegraphics[width=\textwidth]{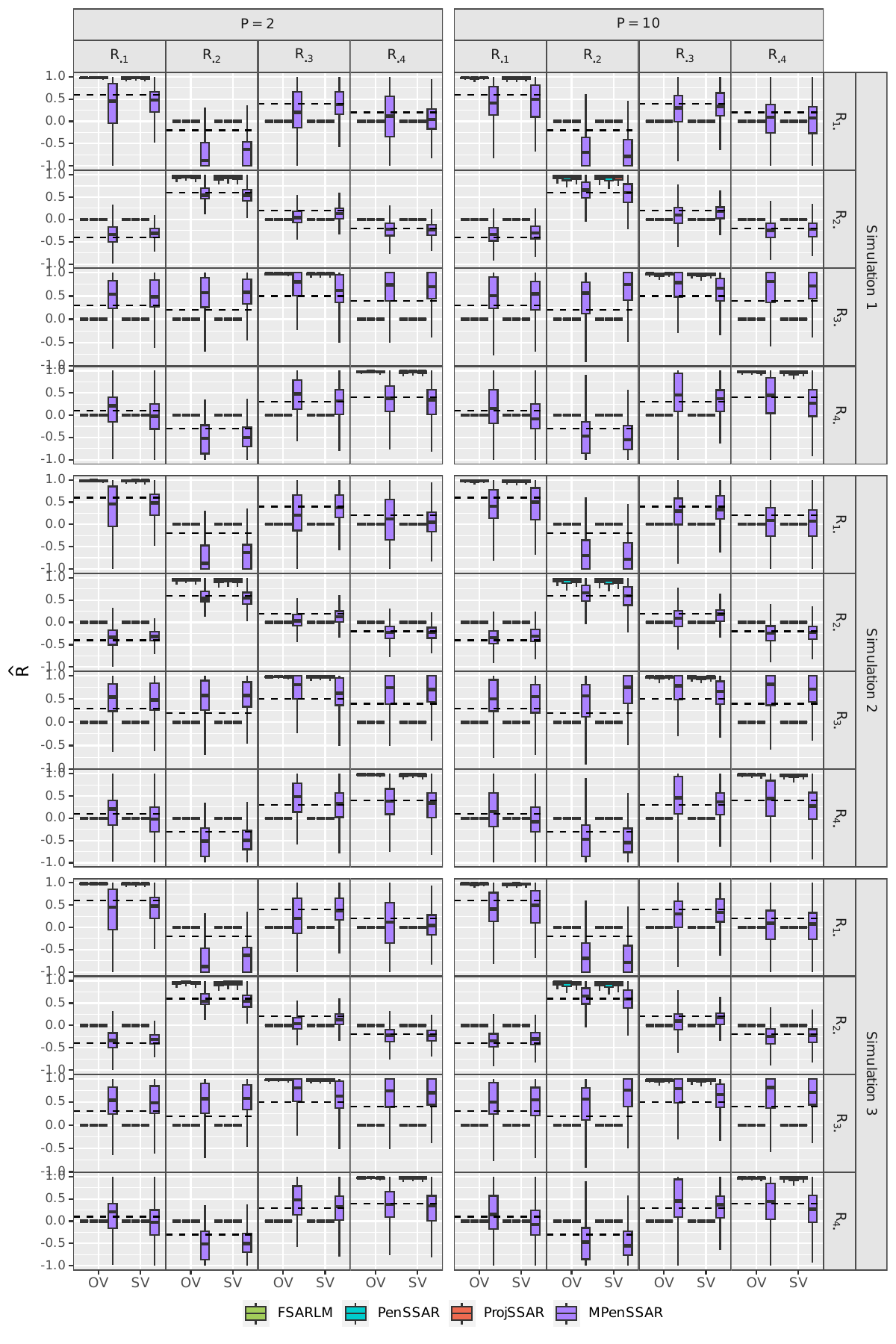}
\caption{Estimation of $R$ with the FSARLM, the PenSSAR, the ProjSSAR and the MPenSSAR using ordinary (OV) and spatial (SV) validation, when $R = R_\text{mod}$. The dotted line corresponds to the true value of the coefficient.}
\label{fig:estimRm}
\end{figure}

\begin{figure}
\centering
\includegraphics[width=\textwidth]{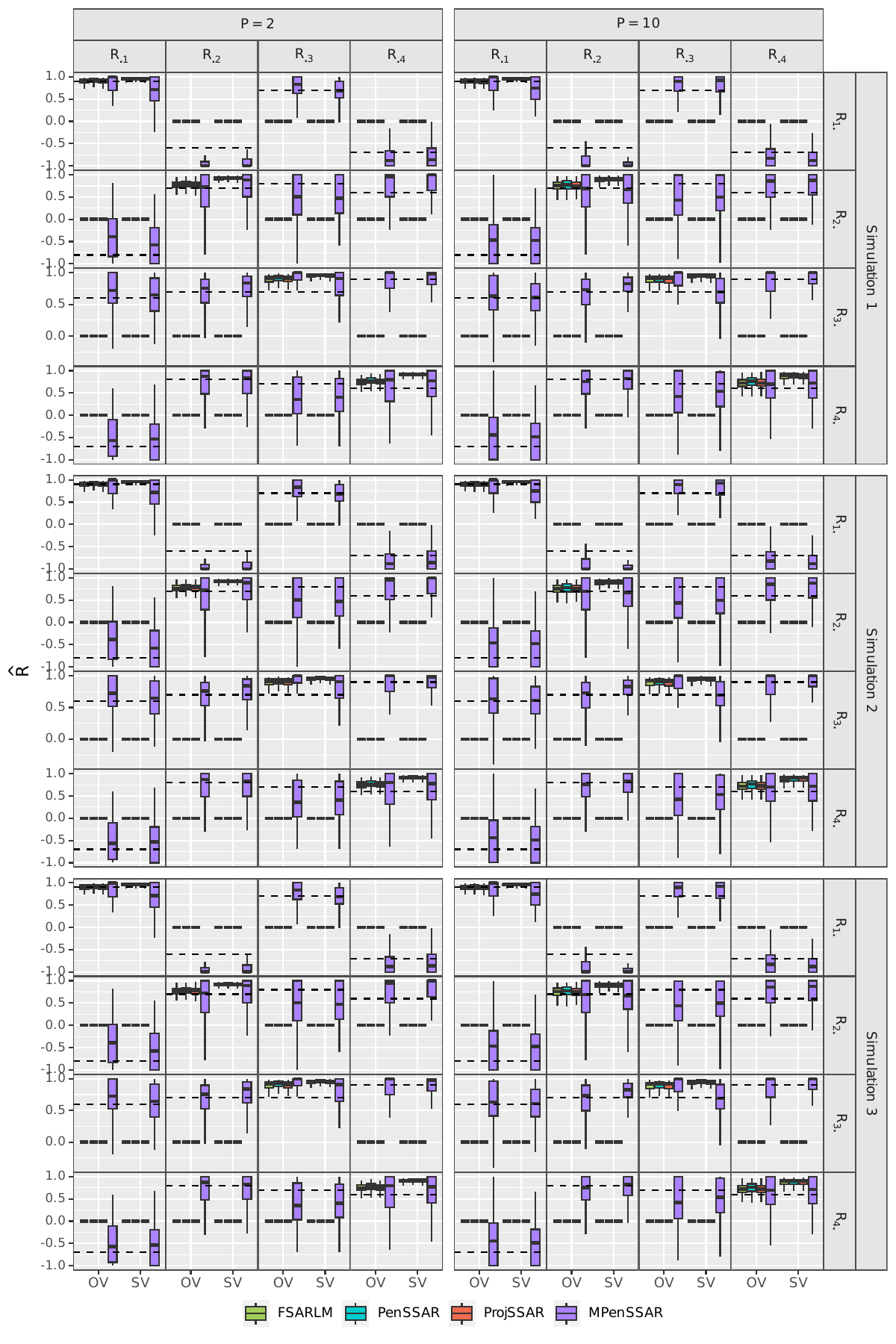}
\caption{Estimation of $R$ with the FSARLM, the PenSSAR, the ProjSSAR and the MPenSSAR using ordinary (OV) and spatial (SV) validation, when $R = R_h$. The dotted line corresponds to the true value of the coefficient.}
\label{fig:estimRh}
\end{figure}

\end{document}